\newtheorem{thm}{Theorem}[section]
\newtheorem{defn}[thm]{Definition}
\newtheorem{prop}[thm]{Proposition}
\newtheorem{lemma}[thm]{Lemma}
\newtheorem{cor}[thm]{Corollary}
\newtheorem{rmk}[thm]{Remark}
\newtheorem*{goal}{The main problem}
\theoremstyle{definition}
\newtheorem{example}[thm]{Example}
\newcommand{\qbinom}[2]{\genfrac{[}{]}{0pt}{}{#1}{#2}_q}
\newcommand{\Dfn}[1]{\emph{\color{blue}#1}}
\newcommand{\fS}{\mathfrak{S}}
\newcommand{\fH}{\mathfrak{H}}
\newcommand{\fC}{\mathfrak{C}}
\newcommand{\ZZ}{\mathbb{Z}}
\newcommand{\RR}{\mathbb{R}}
\newcommand{\SL}{\mathrm{SL}}
\newcommand{\SO}{\mathrm{SO}}
\newcommand{\Sp}{\mathrm{Sp}}
\newcommand{\GL}{\mathrm{GL}}
\newcommand{\Spin}{\mathrm{Spin}}
\DeclareMathOperator{\wt}{wt}
\newcommand{\wgt}{\mu}
\newcommand{\wgtosc}{\omega}
\newcommand{\osc}{\mathcal{O}} 
\newcommand{\alt}{\mathcal{A}} 
\newcommand{\M}{\mathcal{M}} 
\newcommand{\Perm}{\mathcal{P}} 
\newcommand{\G}{\mathcal{G}} 
\DeclareMathOperator{\rot}{rot} 
\DeclareMathOperator{\pr}{pr} 
\newcommand{\half}[1]{\acute{#1}} 
\DeclareMathOperator{\halfpr}{\half{pr}} 
\def\hm{\half{\wgt}}
\def\Pm{\hat{\wgt}}
\DeclareMathOperator{\ev}{ev} 
\DeclareMathOperator{\rev}{rev} 
\DeclareMathOperator{\reco}{rc} 
\DeclareMathOperator{\dom}{dom} 
\DeclareMathOperator{\domS}{sort} 
\DeclareMathOperator{\cactus}{s} 
\newcommand{\lusztig}{\eta} 
\DeclareMathOperator{\jdt}{jdt} 
\DeclareMathOperator{\rrot}{rrot} 
\DeclareMathOperator{\crot}{crot} 
\newcommand{\pos}[1]{#1_+} 
\renewcommand{\neg}[1]{#1_-} 
\DeclareMathOperator{\extent}{\mathcal{E}} 
\newcommand*\circled[1]{\tikz[baseline=(char.base)]{
   \node[shape=circle,draw,inner sep=0.25pt,minimum size=3pt] (char) {#1};}}
\newcommand{\poscross}{{\circled{\small$+$}}}
\newcommand{\negcross}{{\circled{\small$-$}}}
\newcommand{\fixcross}{{\circled{\small$\times$}}}
\title[Promotion and rotation]{Promotion on oscillating and alternating tableaux and rotation of matchings and permutations}%
\author{Stephan Pfannerer}
\address[S.~Pfannerer and M.~Rubey]{Fakultät für Mathematik und Geoinformation, TU Wien, Austria}
\email{stephan.pfannerer@tuwien.ac.at, martin.rubey@tuwien.ac.at}
\author{Martin Rubey}
\thanks{Pfannerer and Rubey were supported by the the Austrian Science Fund (FWF): P 29275.}
\author{Bruce Westbury}
\address[B.~Westbury]{University of Texas at Dallas}
\email{bruce.westbury@gmail.com}
\keywords{promotion, evacuation, cactus group}
\begin{document}
\begin{abstract}
  Using Henriques' and Kamnitzer's cactus groups, Schützenberger's
  promotion and evacuation operators on standard Young tableaux can
  be generalised in a very natural way to operators acting on highest
  weight words in tensor products of crystals.

  For the crystals corresponding to the vector representations of the
  symplectic groups, we show that Sundaram's map to perfect matchings
  intertwines promotion and rotation of the associated chord
  diagrams, and evacuation and reversal.  We also exhibit a map with
  similar features for the crystals corresponding to the adjoint
  representations of the general linear groups.

  We prove these results by applying van Leeuwen's generalisation of
  Fomin's local rules for jeu de taquin, connected to the action of
  the cactus groups by Lenart, and variants of Fomin's growth
  diagrams for the Robinson-Schensted correspondence.%
\end{abstract}
\maketitle

\section{Introduction}
\begin{figure}[t]
  \begin{center}
  \begin{tikzpicture}[line width=1pt]
    \node (a) [draw=none, minimum size=2.5cm, regular polygon, regular polygon sides=8] at (0,0) {};
    \foreach \n [count=\i from 0, remember=\n as \lastn, evaluate={\i+\lastn}] in {1,2,...,8}
        \path (a.center) -- (a.corner \n) node[pos=1.15] {$\n$};
    \draw (a.corner 1) to (a.corner 6);
    \draw (a.corner 2) to (a.corner 4);
    \draw (a.corner 3) to (a.corner 8);
    \draw (a.corner 5) to (a.corner 7);
  \end{tikzpicture}
  \hfil
  \tikzset{->-/.style={decoration={
        markings,
        mark=at position #1 with {\arrow{>}}},postaction={decorate}}}
  \begin{tikzpicture}[line width=1pt]
    \node (a) [draw=none, minimum size=2.5cm, regular polygon, regular polygon sides=5] at (0,0) {};
    \foreach \n [count=\i from 0, remember=\n as \lastn, evaluate={\i+\lastn}] in {1,2,...,5}
        \path (a.center) -- (a.corner \n) node[pos=1.15] {$\n$};
    \draw[->-=.9] (a.corner 1) to (a.corner 5);
    \draw[->-=.9, bend right] (a.corner 2) to (a.corner 4);
    \draw[->-=.9] (a.corner 3) to (a.corner 1);
    \draw[->-=.9, bend right] (a.corner 4) to (a.corner 2);
    \draw[->-=.9] (a.corner 5) to (a.corner 3);
  \end{tikzpicture}
\end{center}
\caption{A $3$-noncrossing perfect matching and a permutation as
    chord diagrams.}
  \label{fig:chord-diagrams}
\end{figure}

This project began with the discovery that Sundaram's map from
perfect matchings, regarded as chord diagrams as in
Figure~\ref{fig:chord-diagrams}, to oscillating tableaux intertwines
rotation and promotion, see
Theorem~\ref{thm:rotation-promotion-matchings}.  Oscillating tableaux
are in bijection with highest weight words in a tensor power of the
crystal of the vector representation of the symplectic group
$\Sp(2n)$, and promotion is a natural generalisation of
Schützenberger's promotion map on standard Young tableaux.

We then found a map analogous to Sundaram's from permutations, again
regarded as chord diagrams, to Stembridge's alternating tableaux.
Alternating tableaux of length $r$ are in bijection with the highest
weight words in the $r$-th tensor power of the crystal for the
adjoint representation of the general linear group $\GL(n)$.  This
new map intertwines rotation and a suitable variant of promotion
provided that $n\geq r$, see
Theorem~\ref{thm:rotation-promotion-permutations}.  Finally, it
turned out that Theorem~\ref{thm:rotation-promotion-matchings} can be
deduced by a suitable embedding of the set of oscillating tableaux
into the set of alternating tableaux.

Both results are part of a more elaborate program, as we now explain.
A key observation is that Schützenberger's promotion and its above
mentioned variants can be understood in terms of an action of the
cactus groups.  These infinite groups, also known as quasi-braid
groups, were introduced by Devadoss~\cite[Def.~6.1.2]{MR1718078} and
placed into our context by Henriques and Kamnitzer~\cite{MR2219257}.
They defined a weight preserving action of the $r$-fruit cactus group
on highest weight words in $r$-fold tensor products of crystals.  The
action of a specific element of the cactus group generalises
promotion to an action on highest weight words in a tensor power of a
crystal.

As shown by Lenart~\cite{MR2361854}, the action of the cactus groups
can be made explicit using certain local rules discovered by van
Leeuwen~\cite{MR1661263}.  These rules generalise the classical local
rules for jeu de taquin by Fomin~\cite[App.~1]{MR1676282} to crystals
corresponding to any minuscule representation of a Lie group.  To
accommodate non-minuscule representations, one can use an embedding
into tensor products of minuscule representations, with a few
exceptions.

The link between highest weight words and diagrams involves classical
invariant theory.  Recall that the number of highest weight elements
of given weight in a crystal is the multiplicity of the irreducible
of the same weight in the direct sum decomposition of the
corresponding representation.  In particular, the number of highest
weight elements of weight zero is the dimension of the invariant
subspace.

The idea of using diagrams to index a basis of the invariant subspace
of a tensor power of a representation goes back to Rumer, Teller and
Weyl~\cite{zbMATH02549276}, and specifically Brauer~\cite{MR1503378}.
Given a perfect matching of $2r$ elements, he constructed an
invariant of the $2r$-th tensor power of the vector representation of
the symplectic group $\Sp(2n)$.  Furthermore, he showed that these
invariants linearly span the invariant space.  Using a result of
Sundaram~\cite{MR2941115}, Rubey and
Westbury~\cite{RubeyWestbury,RubeyWestbury2015} have shown that the
invariants obtained from perfect matchings without $(n+1)$-crossings
(see Section~\ref{sec:symplectic-results} for the definition) form a
basis of this space.

The symmetric group acts on a tensor power of a representation by
permuting tensor positions.  This action commutes with the action of
the Lie group.  In particular, the symmetric group also acts on the
invariant space of the tensor power.  

It is not hard to see that Brauer's construction translates rotation
of the chord diagram to the action of the long cycle of the symmetric
group on the corresponding invariant.  Moreover, it was shown by
Westbury~\cite{Westbury2009} that the action of the long cycle is
isomorphic to the action of promotion on highest weight words of
weight zero.

In general, given a representation, we would like to find a basis of
the invariant space indexed by diagrams, such that rotation of
diagrams corresponds to the action of the long cycle on the invariant
space.  We provide a review of such diagrammatic bases for invariant
spaces of tensor powers of other representations in
Section~\ref{sec:background}.

It then remains to establish an explicit bijection between the set of
diagrams and the set of highest weight words of weight zero which
intertwines rotation and promotion.  This is the focus of the present
article.

The initial motivation to study this problem arises from Reiner,
Stanton and White's cyclic sieving phenomenon~\cite{MR2087303}.
Essentially, this phenomenon occurs when the character of a cyclic
group action can be expressed as a polynomial in a particularly
simple way.  A standard example is the rotation action on noncrossing
perfect matchings of $\{1,\dots,2r\}$.  In this case, the $q$-Catalan
\lq number\rq\ $\frac{1}{[r+1]_q}\qbinom{2r}{r}$ is a cyclic sieving
polynomial: the evaluation at $q=e^{k\pi i/r}$ yields the number of
noncrossing perfect matchings invariant under rotation by $k$ points.

Consider the representation of the symmetric group on the invariant
space, and recall that the diagrammatic basis is preserved by the
action of the long cycle.  Then the cyclic sieving polynomial for
this cyclic action can be extracted from the Frobenius character of
the symmetric group action.  Although the Frobenius character of the
invariant spaces of tensor powers of representations is in general
hard to compute, it is known for several representations of interest,
in particular for the vector representation of $\Sp(2n)$ and the
adjoint representation of $\GL(n)$.

This article can therefore be regarded as an explicit demonstration
of the fact that the cyclic sieving phenomena for promotion of
highest weight words of weight zero and rotation of diagrams are the
same in the case of the vector representation of $\Sp(2n)$ and in the
case of the adjoint representation of $\GL(n)$.

In Section~\ref{sec:background} we recall some background material on
crystals of minuscule representations, make our goal precise and
indicate the conditions necessary to make our methods work.
Furthermore, we provide a summary of our contributions and what is
already known.  Precise statements of the new results are given in
Section~\ref{sec:results}.  The general machinery connecting the
action of the cactus groups on highest weight words, local rules, and
promotion is developed in Section~\ref{sec:crystals-local-rules}.  In
Section~\ref{sec:growth-diagrams} we introduce the two fundamental
growth diagram bijections, which apply only to oscillating and
alternating tableaux, respectively.  The proofs that these bijections
indeed intertwine promotion and rotation are delivered in the final
section, along with some additional material.

\section{Crystals and highest weight words}
\label{sec:background}
In this section, before stating the main goal of this article
precisely, we provide some background information on minuscule
representations, crystals and their tensor products.  We also recall
explicit combinatorial realisations of the associated highest weight
words, and survey in which cases partial solutions to the questions
mentioned in the introduction are known.  Although we will
subsequently only consider the adjoint representation of $\GL(n)$ and
the vector representation of $\Sp(2n)$, we provide the background in
a more general setting, to place our results into a bigger
picture.

A representation of a Lie group is \Dfn{minuscule} if its Weyl group
$W$ acts transitively on the weights of the representation: the set
of weights forms a single orbit under the action of $W$.  The
non-trivial minuscule representations are:
\begin{description}\setlength{\itemsep}{0pt}
\item[Type $A_n$] All exterior powers of the vector representation.
\item[Type $B_n$] The spin representation.
\item[Type $C_n$] The vector representation.
\item[Type $D_n$] The vector representation and the two half-spin
  representations.
\item[Type $E_6$] The two fundamental representations of dimension
  $27$.
\item[Type $E_7$] The fundamental representation of dimension $56$.
\end{description}
There are no nontrivial minuscule representations in types $G_2$,
$F_4$ or $E_8$.  Except for these types, any representation can be
embedded into a tensor product of minuscule representations.

For example, the adjoint representation of $\GL(n)$ is not minuscule,
but can be regarded as the tensor product of the vector
representation and its dual.  Slightly less trivial, the vector
representation of the odd orthogonal group is not minuscule, but
appears as a direct summand in $S\otimes S$, where $S$ is the spin
representation of the spin group $\Spin(2n+1)$.

Given a dominant weight $\lambda$, we associate to the irreducible
representation $V(\lambda)$ its \Dfn{crystal graph} $B_\lambda$.
This is a certain connected edge-coloured digraph with
$\dim V(\lambda)$ vertices, each labelled with a weight of the
representation.  Each edge of the crystal graph is labelled with
one of the simple roots of the root system, such that the weight of
the target of the edge is obtained from the weight of its source by
subtracting the simple root.  There is a unique vertex without
in-coming edges, the \Dfn{highest weight vertex}, and this vertex has
weight $\lambda$.  There is also a unique vertex without out-going
edges, the \Dfn{lowest weight vertex}.  The sum of the formal
exponentials of the weights of the vertices is the character of the
representation.  In particular, isomorphism of crystal graphs
corresponds to isomorphism of representations.  The direct sum of
representations is then associated with the disjoint union of the
corresponding crystal graphs.

For a minuscule representation $V(\lambda)$ of dominant weight
$\lambda$, the vertices of the
associated 
crystal graph $B_\lambda$ can be identified with the weights of
$V(\lambda)$.  The edges are given by the Kashiwara lowering
operators, as follows.  Let $\{\alpha_i:i\in I\}$ be the set of
simple roots and $s_i\in W$ be the simple reflection corresponding to
$\alpha_i$.  Then there is a coloured edge
$\mu\stackrel{i}{\to}\mu-\alpha_i$ provided that
$s_i(\mu) = \mu-\alpha_i$.

There is a (relatively) simple way to construct the crystal graph of
a tensor product of representations given their individual crystal
graphs.  The vertices of the tensor product
$C_1\otimes\dots\otimes C_r$ of crystal graphs, corresponding to an
$r$-fold tensor product of representations, are the words of length
$r$ whose $i$-th letter is a vertex of $C_i$.  The weight of a vertex
in the tensor product is the sum of the weights of its letters.  In
this context, we refer to the highest weight vertices of the
connected components as \Dfn{highest weight words}.  Isolated
vertices correspond to copies of the trivial representation and
therefore have weight zero.  They are referred to as \Dfn{highest
  weight words of weight zero}.

There is an action of the so-called $r$-fruit cactus group $\fC_r$ on
the set of highest weight words in $r$-fold tensor products of
crystals $C_1,\dots,C_r$.  The cactus group $\fC_r$ has generators
$\cactus_{p,q}$, for $1\leq p < q\leq r$, satisfying certain
relations stated in Definition~\ref{dfn:cactus-groups-generators} of
Section~\ref{sec:crystals-local-rules}.  The generator
$\cactus_{p,q}$ acts by mapping highest weight words of
$C_1\otimes\dots\otimes C_r$ bijectively to highest weight words of
\[
C_1\otimes\dots\otimes C_{p-1}\otimes(C_q\otimes C_{q-1}\dots\otimes
C_p)\otimes C_{q+1}\otimes\dots\otimes C_r,
\]
see Definition~\ref{dfn:cactus-group-action}.  We define the
\Dfn{promotion} $\pr w$ of $w$ as $\cactus_{1,r}\cactus_{2,r} w$, and
the \Dfn{evacuation} $\ev w$ of $w$ as $\cactus_{1,r} w$.  These
generalise Schützenberger's maps of the same name, as we explain in
example~\ref{eg:exterior_powers} below.  We will see in
Lemma~\ref{lem:extend-s_1q} that the cactus group $\fC_r$ is already
generated by the elements $\cactus_{1,q}$ for $q\leq r$.  Therefore
it is essentially enough to understand evacuation.  An analogous
statement is true for promotion.

\begin{goal}
  Define a set of chord diagrams and a bijection between these and
  the highest weight words of weight zero which intertwines rotation
  of diagrams with promotion of words.  Additionally, determine the
  action of evacuation on the set of chord diagrams.
\end{goal}

In the following we briefly survey the cases in which (partial)
solutions to this problem are known.

Recall that dominant weights of $\SL(n)$, $\Sp(2n)$ and $\SO(2n+1)$
are vectors of length $n$ with weakly decreasing non-negative integer
entries.  Therefore, dominant weights can be identified with integer
partitions into at most $n$ parts in these cases.  A dominant weight
of $\Spin(2n+1)$ is a vector of length $n$ with weakly decreasing
non-negative half-integer entries, such that either all entries are
integers or none of them.  Finally, a dominant weight of $\GL(n)$ is
a vector of length $n$ with weakly decreasing integer entries, a
so-called staircase.  

Throughout, we denote the $i$-th unit vector by $e_i$.  To improve
readability, we will use vector notation for weights - often dropping
commas and parentheses, and specify letters of highest weight words
as linear combinations of unit vectors $e_i$.

To any highest weight word $w=w_1\dots w_r$ in a tensor product of
crystals $C_1\otimes\dots\otimes C_r$ we bijectively associate a
sequence of dominant weights going under names like semistandard,
oscillating, alternating, vacillating \Dfn{tableau}.  We call the
final weight $\mu=(\mu_1,\dots,\mu_n)$ of such a sequence, which will
also be the weight of the word $w$, the \Dfn{shape} of the tableau.
If $\mu$ is zero, we say that the tableau is of \Dfn{empty shape}.
We denote the zero weight by $\emptyset$.

Suppose now that in each crystal $C_i$, $1\leq i\leq r$, all vertices
have distinct weight.  For example, this is the case when all the
$C_i$ correspond to minuscule representations.  Then the tableau is
given by the sequence $\emptyset=\wgt^0,\wgt^1,\dots,\wgt^r=\wgt$,
where $\wgt^q=\sum_{i=1}^q \wt(w_i)$ is the sum of the weights of the
first $q$ letters.  In this case, one can recover the letters of the
highest weight word via the successive differences
$\wt(w_i)=\wgt^i-\wgt^{i-1}$.

In the examples below, the only exception to this rule is the case of
alternating tableaux for the adjoint representation of $\GL(n)$,
which we will explain separately.

\begin{example}[standard and semistandard tableaux]\label{eg:exterior_powers}
  Let $V$ be the vector representation of $\SL(n)$, and let each $C_i$,
  for $1\leq i\leq r$, be a copy of the corresponding crystal.  Then the
  highest weight words can be identified with standard Young tableaux
  of size $r$ with at most $n$ columns: the position of the unique
  entry equal to $1$ in $w_i$ is the column of the tableau in which
  the number $i$ appears.  Since the weight lattice of $\SL(n)$ is
  the image of $\ZZ^n$ in the quotient of $\RR^n$ by the span of
  $(1,\dots,1)$, a highest weight word has weight zero if and only if
  all $n$ columns of the corresponding tableau have the same length.

  More generally, for any sequence of positive integers
  $\alpha=\alpha_1,\dots\alpha_r$, let $C_i$ be the crystal
  corresponding to the $\alpha_i$-th exterior power of the vector
  representation of $\SL(n)$.  Then $w$ is a highest weight word if
  and only if $w_i$ has exactly $\alpha_i$ entries equal to $1$, all
  others $0$, and $\wgt^q$ is dominant for all $q\leq r$.  Therefore,
  a highest weight word can be identified with a semistandard Young
  tableau of type $\alpha$ having at most $n$ columns.\footnote{We
    are using slightly a nonstandard convention here.  More
    traditionally, one would use \emph{dual} semistandard tableaux
    instead, with entries in \emph{columns weakly increasing},
    entries in \emph{rows strictly increasing}, and at most $n$
    \emph{rows}.  The positions containing a $1$ in $w_i$ then
    designate the \emph{rows} of the tableau containing the number
    $i$.}

  One can show that in this case the generator $\cactus_{1,r}$ of the
  cactus group is precisely Schützenberger's evacuation of
  semistandard Young tableaux with largest entry at most $r$, and
  $\cactus_{1,r} \cactus_{2,r}$ is Schützenberger's promotion.  Using
  evacuation as a building block, the action of the cactus groups on
  semistandard Young tableaux was studied by Chmutov, Glick and
  Pylyavskyy~\cite{BK}.  As an aside, we remark that the generators
  $\cactus_{i,i+2}$ encode Assaf's dual equivalence graph.

  A diagrammatic basis for the invariant space was recently
  constructed by Cautis, Kamnitzer and Morrison~\cite{MR3263166},
  generalising Kuperberg's webs for $\SL(2)$ and $\SL(3)$,
  see~\cite{MR1403861}.  However, only Kuperberg's web bases are
  preserved by rotation.  For these, Petersen, Pylyavskyy and
  Rhoades~\cite{MR2519848} and Patrias~\cite{MR3861768} demonstrated
  that the growth algorithm of Khovanov and Kuperberg
  in~\cite{MR1680395} intertwines promotion with rotation.
\end{example}

\begin{example}[oscillating tableaux]\label{eg:oscillating}
  Let $V$ be the vector representation of $\Sp(2n)$ and let $C_i$ be
  the corresponding crystal, for $1\leq i\leq r$.  Then $w$ is a
  highest weight word if and only if $w_i$ is in
  $\{\pm e_j : 1\leq j \leq n\}$, and $\wgt^q$ is dominant for
  $q\leq r$.  The corresponding tableau is called an $n$-symplectic
  oscillating tableau.

  For example, the $1$-symplectic oscillating tableaux of length
  three are
  \[
  (\emptyset,1,2,3),\quad (\emptyset,1,2,1),\quad\text{and}\quad
  (\emptyset,1,\emptyset,1).
  \]
  The corresponding highest weight words are
  \[
  e_1\,e_1\,e_1,\quad e_1\,e_1\,\text{-}e_1,\quad\text{and}\quad e_1\,\text{-}e_1\,e_1.
  \]

  As a further example, the oscillating tableau
  \[
  \osc = (\emptyset,1,11,21,2,21,11,21,211,21)
  \]
  has length $9$ and shape $21$.  It is $3$-symplectic (since no
  partition has four parts) but it is not $2$-symplectic (since there
  is a partition with three parts).  The corresponding highest weight
  word is
  $w=e_1\,e_2\,e_1\,\text{-}e_2\,e_2\,\text{-}e_1\,e_1\,e_3\,\text{-}e_3$.

  A suitable set of chord diagrams is the set of $(n+1)$-noncrossing
  perfect matchings of $\{1,\dots,r\}$, see
  Section~\ref{sec:symplectic-results} for definitions.  A surjection
  from the set of perfect matchings to a basis of the invariant
  subspace of $\otimes^r V$ was given by Brauer~\cite{MR1503378}.
  Sundaram~\cite{MR2941115} provided a bijection between the set of
  $(n+1)$-noncrossing perfect matchings and $n$-symplectic
  oscillating tableaux of empty shape.
  Theorem~\ref{thm:rotation-promotion-matchings} below shows that
  this bijection intertwines rotation with promotion, and reversal
  with evacuation.

  A variation of this example is obtained by replacing $V$ with the
  $k$-th symmetric power of the vector representation.  A suitable
  set of chord diagrams indexing a basis of the space of invariant
  tensors was defined by Rubey and Westbury~\cite{RubeyWestbury2015,
    RubeyWestbury}.  Briefly, partition the set $\{1,\dots,kr\}$ into
  $r$ blocks of $k$ consecutive elements.  Then a chord diagram is an
  $(n+1)$-noncrossing perfect matching of this set, such that no pair
  is contained in a block and, if two pairs cross, the four elements
  are in four distinct blocks.
\end{example}


\begin{example}[alternating tableaux]\label{eg:alternating}
  Let $\mathfrak{gl}_n$ be the adjoint representation of $\GL(n)$.
  This representation is not minuscule, but we can identify it with
  $V\otimes V^\ast$ where $V$ is the vector representation of
  $\GL(n)$ and $V^\ast$ is its dual.  Let $C_i$, for $1\leq i\leq r$,
  be the crystal corresponding to $V\otimes V^\ast$.  Thus, the
  letters of a highest weight word $w$ are pairs
  $(e_k,\text{-}e_\ell)$ of weight $e_k-e_\ell$, with
  $1\leq k,\ell\leq n$.  The corresponding $\GL(n)$-alternating
  tableau is the sequence of dominant weights
  $\emptyset\!=\!\wgt^0,\wgt^1,\ldots,\wgt^{2r}\!=\!\wgt$, where
  $\wgt^{2q} = \sum_{i=1}^q \wt(w_i)$, and
  $\wgt^{2q+1} = \wgt^{2q} + e_k$ when
  $w_{q+1}=(e_k,\text{-}e_\ell)$.

  A word $w$ is of highest weight if and only if $\wgt^p$ is dominant
  for $p\leq 2r$.  Given a $\GL(n)$-alternating tableau, one can
  recover the letter $w_i$ of the corresponding highest weight word
  as $w_i=(e_k,\text{-}e_\ell)$ with $e_k=\wgt^{2i-1}-\wgt^{2i-2}$
  and $\text{-}e_\ell=\wgt^{2i}-\wgt^{2i-1}$.

  For example, the $\GL(2)$-alternating tableaux of length two are
  \begin{align*}
  &(00,10,00,10,00),\quad
   (00,10,00,10,1\bar1),\quad
   (00,10,1\bar1,10,00),\\
  &(00,10,1\bar1,10,1\bar1),\quad
   (00,10,1\bar1,2\bar1,1\bar1),\quad
   (00,10,1\bar1,2\bar1,2\bar2),
  \end{align*}
  writing $\bar 1$ in place of $-1$, etc., for better readability.
  The corresponding highest weight words are
  \begin{align*}
    &(e_1,\text{-}e_1)\,(e_1,\text{-}e_1),\quad (e_1,\text{-}e_1)\,(e_1,\text{-}e_2),\quad (e_1,\text{-}e_2)\,(e_2,\text{-}e_1),\\
    &(e_1,\text{-}e_2)\,(e_2,\text{-}e_2),\quad (e_1,\text{-}e_2)\,(e_1,\text{-}e_1),\quad (e_1,\text{-}e_2)\,(e_1,\text{-}e_2).
  \end{align*}

  For $n$ large enough, a suitable set of chord diagrams is the set
  of permutations of $\{1,\dots,r\}$, see
  Section~\ref{sec:adjoint-results} for definitions.  We provide a
  bijection between this set and $\GL(n)$-alternating tableaux of
  empty shape, see Theorem~\ref{thm:rotation-promotion-permutations}
  below.  This bijection intertwines rotation with promotion, and
  reverse-complement with evacuation.
\end{example}

\begin{example}[fans of Dyck paths]
  Let $S$ be the spin representation of the spin group $\Spin(2n+1)$
  and let $\lambda_i = \frac{1}{2}\sum_{j=1}^n e_j$ be its dominant
  weight.  Then $w$ is a highest weight word if and only if
  $w_i = (\pm\frac{1}{2},\dots,\pm\frac{1}{2})$ and
  $\wgt^q$ is dominant for all $q\leq r$.

  Therefore, a highest weight word $w$ of weight zero can be
  identified with a fan of $n$~Dyck paths of length $r$: the first
  entry of $w_i$ is $\frac{1}{2}$ if and only if the top most Dyck
  path has an up-step at position $i$.  In general, the $j$-th entry
  of $w_i$ is $\frac{1}{2}$ if and only if the $j$-th Dyck path has
  an up-step at position $i$.  One can show that $\ev$ acts on these
  as reversal.

  In general, no suitable set of chord diagrams is known.  For $n=2$
  there is an exceptional isomorphism between $S$ and the vector
  representation of $\Sp(4)$.  Thus, the results
  for oscillating tableaux apply in this case.
\end{example}

\begin{example}[vacillating tableaux]

  Let $V$ be the vector representation of $\SO(2n+1)$ and let
  $\lambda_i=e_1$ be its dominant weight.  Then $w$ is a highest
  weight word if and only if $w_i$ is in
  $\{\pm e_j : 1\leq j \leq n\}\cup\{0\}$, $\mu^q$ is dominant for
  $q\leq r$ and $w_i\neq 0$ if $\mu^{i-1}$ contains an entry equal to
  $0$.  The corresponding tableaux are called vacillating tableaux
  and can be identified with $n$-fans of Riordan paths,
  see~\cite{MR3875015}.

  In general, no suitable set of chord diagrams is known.  For $n=1$
  there is an exceptional isomorphism between $V$ and the adjoint
  representation of $\SL(2)$.  Thus we obtain a
  bijection between noncrossing set partitions without singletons and
  highest weight words of weight zero with the desired properties
  from the results in Section~\ref{sec:adjoint-results} below.  For
  $n=2$, a bijection between a basis of the invariant subspace of
  $\otimes^r V$ and certain chord diagrams follows from Kuperberg's
  webs~\cite{MR1403861}.
\end{example}

\section{Results}
\label{sec:results}

In this section we present combinatorial realisations of promotion
and evacuation on highest weight words for the vector representation
of the symplectic group and the adjoint representation of the general
linear group.  As it turns out, the former will essentially follow
from the latter.  The proofs are delegated to
Section~\ref{sec:proofs}.

\subsection{The vector representation of the symplectic groups}
\label{sec:symplectic-results}

Let us first recall Sundaram's definition of $n$-symplectic
oscillating tableaux from example~\ref{eg:oscillating}.  As explained
there, these are in bijection with the highest weight words in a
tensor power of the crystal corresponding to the vector
representation of the symplectic group $\Sp(2n)$.

\begin{defn}[Sundaram~\cite{MR2941115}]\label{defn:oscillating}
  An \Dfn{$n$-symplectic oscillating tableau}~$\osc$ of length~$r$
  and (final) shape $\wgt$ is a sequence of partitions
  \[
  \osc=(\emptyset\!=\!\wgt^0,\wgt^1,\ldots,\wgt^r\!=\!\wgt)
  \]
  such that the Ferrers diagrams of two consecutive partitions differ
  by exactly one cell, and each partition $\wgt^i$ has at most $n$
  non-zero parts.
\end{defn}

Recall that a \Dfn{partial standard Young tableau} is a filling of
the Ferrers diagram with distinct non-negative integers such that
entries in rows and columns are strictly increasing.

A now classic bijection due to Sundaram~\cite{MR2941115} maps an
oscillating tableau $\osc$ of length $r$ and shape $\mu$ to a pair
$\big(\M(\osc), \M_T(\osc)\big)$, consisting of a perfect matching of
a subset of $\{1,\dots,r\}$ and a partial standard Young tableau of
shape $\mu$, whose entries form the complementary subset.  We present
Roby's~\cite{MR2716353} description of this bijection in
Section~\ref{sec:growth-diagrams}.

The \Dfn{reversal} $\rev m$ of a perfect matching $m$ of a subset of
$\{1,\dots,r\}$ is obtained by replacing each pair~$(i,j)$
with~$(r+1-j, r+1-i)$.  Our first main result relates this operation
to evacuation as follows.
\begin{thm}\label{thm:s1q-partial-matchings}
  Let $\osc$ be an $n$-symplectic oscillating tableau.  Then
  $\M(\ev\osc)$ is the reversal of $\M(\osc)$ and $\M_T(\ev\osc)$ is
  the Schützenberger evacuation of $\M_T(\osc)$.
\end{thm}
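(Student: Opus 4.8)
The strategy is to reduce the statement about oscillating tableaux to the corresponding statement for alternating tableaux, exploiting the embedding of the symplectic vector crystal into a tensor product of $\GL$-crystals mentioned in the introduction, and then to prove the alternating-tableaux version by a growth-diagram argument. Concretely, I would first recall from Section~\ref{sec:growth-diagrams} Roby's local-rules description of Sundaram's bijection: the pair $\big(\M(\osc),\M_T(\osc)\big)$ is read off a growth diagram whose $i$-th step records whether $\wgt^i$ is obtained from $\wgt^{i-1}$ by adding or removing a cell, with added cells that are later removed producing the arcs of the matching and added cells that survive producing the entries of the partial tableau. The key point is that this growth diagram is built from exactly the van Leeuwen local rules that, by Lenart's theorem, compute the action of the cactus-group generators on highest weight words. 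Hence the operator $\ev = \cactus_{1,r}$ is realised on the growth diagram by the usual ``flip'' that reverses the reading order of the boundary.

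Next I would make precise what this flip does to the two components. For the matching component, reversing the order of the $r$ steps and swapping the roles of ``add'' and ``remove'' on the reversed word sends an arc $(i,j)$, which pairs an add-step at position $i$ with the remove-step at position $j$, to the arc pairing the now-``add'' step at position $r+1-j$ with the ``remove'' step at position $r+1-i$; that is exactly the definition of $\rev m$, $(i,j)\mapsto(r+1-j,\,r+1-i)$. For the tableau component, the surviving cells are untouched as a set, but the numbers recording \emph{when} each cell was created are replaced by $r+1-(\text{creation time})$ read against the reversed, add/remove-swapped boundary; standard growth-diagram bookkeeping for the Robinson--Schensted correspondence identifies precisely this operation with Schützenberger evacuation of the partial standard Young tableau $\M_T(\osc)$. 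So the content of the theorem is the assertion that the abstract evacuation $\cactus_{1,r}$ on highest weight words matches the concrete boundary-flip on Roby's growth diagram.

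That identification is where the real work lies, and it is the step I expect to be the main obstacle. One has to verify that the growth-diagram rules used to define Sundaram's bijection are genuinely the van Leeuwen/Lenart local rules for the relevant minuscule crystals (after the embedding $V_{\Sp(2n)}\hookrightarrow$ a tensor product of $\GL$-vector crystals and their duals), so that Lenart's description of the cactus action applies verbatim; and then that the symmetry of those local rules under transposing the growth square intertwines the diagonal reflection of the whole diagram with $\cactus_{1,r}$. I would handle this by first proving the analogous statement for alternating tableaux --- that is, Theorem~\ref{thm:rotation-promotion-permutations}'s evacuation half, where the growth diagram is the one from Section~\ref{sec:growth-diagrams} for permutations --- using the general machinery of Section~\ref{sec:crystals-local-rules}, and then obtaining the oscillating case by the embedding of oscillating tableaux into alternating tableaux, checking that this embedding is equivariant for $\ev$ and compatible with $\M$, $\M_T$, $\rev$, and Schützenberger evacuation. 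The remaining steps --- that a boundary flip of a growth diagram for a Robinson--Schensted-type correspondence computes evacuation of the recording tableau, and that arc endpoints transform by $(i,j)\mapsto(r+1-j,r+1-i)$ --- are then routine diagram chases.
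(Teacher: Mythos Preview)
Your high-level architecture---prove the alternating-tableau analogue first, then deduce the oscillating case via an embedding---is exactly the paper's route (Section~\ref{sec:proofs}, with the reduction carried out in Section~\ref{sec:oscillating}). So the scaffolding is right. But two of the load-bearing claims in your plan are not correct as stated, and they are precisely where the paper invests its effort.

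First, the assertion that Roby's growth diagram ``is built from exactly the van Leeuwen local rules'' is false. The paper is explicit about this (see the discussion following Figure~\ref{fig:growth-cell}): the growth-diagram rule is $\mu' = \domS(\kappa'+\nu'-\lambda')$ on \emph{conjugate} partitions, with the vertical arrows reversed, whereas the van Leeuwen rule from Definition~\ref{def:local-rules} is $\mu = \dom_W(\kappa+\nu-\lambda)$ on the partitions themselves. These are not the same rule, so ``the operator $\ev = \cactus_{1,r}$ is realised on the growth diagram by the usual flip'' does not follow. Bridging this mismatch is the content of Theorem~\ref{thm:growth-diagram-rotation}, which proves by a five-case analysis (Figure~\ref{fig:different_cases_column_rotation}) that a pair of adjacent staircases in $\G(\phi)$ and the corresponding pair in $\G(\crot\phi)$ satisfy the van Leeuwen rule, provided $n$ is large enough. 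The paper then reads the permutation off the \emph{evacuation diagram} (Corollary~\ref{cor:relate-fillings}) and uses its built-in symmetry to get the $180^\circ$ rotation of the filling (Theorem~\ref{thm:ev-rc}); it is not a one-step ``boundary flip.'' The non-empty-shape case is handled by extending to empty shape (Theorem following Proposition~\ref{prop:rotate-filling}).

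Second, the embedding of oscillating into alternating tableaux is not a crystal embedding for which equivariance under $\ev$ is automatic. The paper's map $\osc\mapsto\alt_\osc$ sends the partition $\wgtosc^i$ to the staircase $[\wgtosc^i,\wgtosc^i]_n$, and showing that promotion commutes with this map requires comparing the hyperoctahedral local rule $\dom_{\fH_n}$ (which sorts absolute values) with the symmetric-group rule $\dom_{\fS_n}$. That comparison is the lemma at the end of Section~\ref{sec:oscillating}, and it is not routine: it uses Lemma~\ref{lem:alternating-equalities} to pin down the structure of each $3\times 3$ block of the promotion diagram and then checks Equation~\eqref{eq:local-rule-oscillating} square by square, with a separate argument at the position of the cross. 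Your plan to ``check that this embedding is equivariant for $\ev$'' needs to confront this explicitly.
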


If the oscillating tableau $\osc$ is of empty shape the tableau
$\M_T(\osc)$ is empty and $\M(\osc)$ is a perfect matching of
$\{1,\dots,r\}$.  We use chord diagrams to visualise perfect
matchings of $\{1,\dots,r\}$, drawing their pairs as (straight)
diagonals connecting the vertices of a counterclockwise labelled
regular $r$-gon, see Figure~\ref{fig:chord-diagrams}.

A perfect matching is \Dfn{$(n+1)$-noncrossing} if it contains at
most~$n$ pairs that mutually cross in its chord diagram.  It follows
from Sundaram's bijection that these are precisely the perfect
matchings corresponding to $n$-symplectic oscillating tableaux of
empty shape.

Using the visualisation as a chord diagram, the \Dfn{reversal} of a
perfect matching is obtained by reflecting the diagonals of the
diagram on a well-chosen axis.  The \Dfn{rotation} $\rot m$ of a
matching $m$ is obtained by replacing each pair $(i,j)$ with the pair
$\big(i\pmod r +1, j\pmod r + 1\big)$.  Visually, this corresponds to
a rotation of the diagonals of the diagram.

\begin{thm}\label{thm:rotation-promotion-matchings}
  The bijection $\M$ between $n$-symplectic oscillating tableaux of
  empty shape and $(n+1)$-noncrossing perfect matchings intertwines
  promotion and rotation, and evacuation and reversal:
  \[
  \rot\M(\osc) = \M(\pr\osc)\text{ and }\rev\M(\osc) = \M(\ev\osc).
  \]
\end{thm}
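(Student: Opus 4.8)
The plan is to deduce Theorem~\ref{thm:rotation-promotion-matchings} directly from Theorem~\ref{thm:s1q-partial-matchings}, exploiting the fact noted in Section~\ref{sec:background} that both promotion and evacuation are built from the cactus group generators $\cactus_{1,q}$ --- in particular $\pr = \cactus_{1,r}\cactus_{2,r}$ and $\ev = \cactus_{1,r}$ --- so that it suffices to control the single generator $\cactus_{1,q}$ and then compose. First I would observe that the evacuation statement $\rev\M(\osc) = \M(\ev\osc)$ is immediate: restricting Theorem~\ref{thm:s1q-partial-matchings} to an oscillating tableau of empty shape, the partial tableau $\M_T(\osc)$ is empty, so the theorem collapses to exactly $\M(\ev\osc) = \rev\M(\osc)$, and one only needs to check that for a perfect matching of the full set $\{1,\dots,r\}$ the combinatorial reversal $(i,j)\mapsto(r+1-j,r+1-i)$ agrees with the geometric reflection of the chord diagram, which is a one-line verification.

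\medskip

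The promotion statement requires more care because $\cactus_{2,r}$ acts on a tensor product $C_1\otimes\dots\otimes C_r$ by reversing only the last $r-1$ tensor factors, which is not covered verbatim by Theorem~\ref{thm:s1q-partial-matchings}. The key step is therefore to promote Theorem~\ref{thm:s1q-partial-matchings} from a statement about $\cactus_{1,r}$ acting on length-$r$ tableaux to a statement about $\cactus_{1,q}$ acting on an arbitrary initial segment. Concretely, I would argue that if $\osc=(\wgt^0,\dots,\wgt^r)$ is an $n$-symplectic oscillating tableau, then $\cactus_{1,q}$ acts by evacuating the initial segment $(\wgt^0,\dots,\wgt^q)$ while leaving $\wgt^q,\dots,\wgt^r$ untouched --- this is essentially the definition of the cactus generators recalled before the main problem --- and then apply Theorem~\ref{thm:s1q-partial-matchings} to the sub-tableau $(\wgt^0,\dots,\wgt^q)$, whose associated pair is the restriction of $(\M(\osc),\M_T(\osc))$ to the chords and tableau entries lying in $\{1,\dots,q\}$. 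Running this for $q=r$ and then for $q=r-1$ on the resulting tableau $\cactus_{1,r}\osc$, I would track precisely how the perfect matching transforms: the first application sends $(i,j)\mapsto(r+1-j,r+1-i)$, the second (on $\{2,\dots,r\}$ after a relabelling shift) reflects again on a shifted axis, and composing the two reflections of a regular $r$-gon produces exactly the rotation $(i,j)\mapsto(i\bmod r + 1, j\bmod r + 1)$. A small but necessary point along the way is that the composite $\cactus_{1,r}\cactus_{2,r}$ acting on an empty-shape tableau again yields an empty-shape tableau, so that $\M(\pr\osc)$ is genuinely a perfect matching of all of $\{1,\dots,r\}$ and $\M_T$ stays empty throughout; this follows because promotion preserves the weight of a highest weight word, hence preserves the shape.

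\medskip

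I expect the main obstacle to be the bookkeeping in the reduction step: making precise that $\cactus_{1,q}$ really does act as ``evacuate the first $q$ steps of the oscillating tableau and freeze the rest'', and that under Sundaram's bijection $\M$ this localises to the corresponding sub-matching on $\{1,\dots,q\}$ together with the partial tableau recording the ``unmatched'' positions up to $q$. This compatibility of $\M$ with restriction to initial segments is plausible from Roby's growth-diagram description --- growth diagrams are inherently local and an initial segment of the oscillating tableau corresponds to an initial block of rows/columns of the diagram --- but it genuinely has to be checked against that description rather than taken for granted. Once that lemma is in hand, the remainder is the elementary fact that two reflections of a regular polygon, on the two axes arising from evacuating $\{1,\dots,r\}$ and then $\{2,\dots,r\}$, compose to a one-step rotation, which I would verify by a direct computation on the index maps $(i,j)\mapsto(r+1-j,r+1-i)$ and the shifted reversal.
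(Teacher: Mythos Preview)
Your approach is correct in outline but takes a genuinely different route from the paper. You reduce Theorem~\ref{thm:rotation-promotion-matchings} to Theorem~\ref{thm:s1q-partial-matchings} by writing $\pr=\cactus_{1,r-1}\cactus_{1,r}$ (via Lemma~\ref{lem:extend-s_1q}), applying Theorem~\ref{thm:s1q-partial-matchings} first to the full tableau and then to the initial segment of length $r-1$, and reading off that two reflections compose to a rotation. The locality lemma you flag --- that the growth diagram of the initial segment is the upper-left subtriangle of the full growth diagram, so $\M$ and $\M_T$ of the segment are the restricted matching and the row-$(r-1)$ partitions --- is exactly what is needed, and it does follow from Roby's description; after evacuating the segment the single entry of $\M_T$ moves to its complement, which pins down the unique cross in the last row and hence the partner of $r$. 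One small correction: $\cactus_{1,r-1}$ acts on positions $\{1,\dots,r-1\}$, not on $\{2,\dots,r\}$ as your parenthetical suggests, so be careful when writing out the index maps for the two reflections.

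The paper does something quite different: it never argues internally within the oscillating world at all. Instead it embeds an $n$-symplectic oscillating tableau $\osc$ into a $\GL(n)$-alternating tableau $\alt_\osc$ by setting $\wgt^{2i}=[\wgtosc^i,\wgtosc^i]_n$ and $\wgt^{2i+1}=[\wgtosc^i\vee\wgtosc^{i+1},\wgtosc^i\wedge\wgtosc^{i+1}]_n$, observes that the filling of $\G(\alt_\osc)$ is the fixed-point-free involution corresponding to $\M(\osc)$, and then proves a lemma that promotion commutes with this embedding by comparing the $\fH_n$ local rule for oscillating tableaux with the $\fS_n$ local rule for the doubled alternating tableau square by square. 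The oscillating statements then fall out of the already-established alternating results (Theorems~\ref{thm:pr-rot} and~\ref{thm:ev-rc}). Your approach is more self-contained and avoids the alternating machinery entirely, at the cost of taking Theorem~\ref{thm:s1q-partial-matchings} as input; the paper's approach is less elementary but derives both Theorems~\ref{thm:s1q-partial-matchings} and~\ref{thm:rotation-promotion-matchings} from a single embedding and treats the symplectic case as a specialisation of the general linear one.
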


\begin{rmk}
  Consider the natural embedding $\iota$ of the set of $n$-symplectic
  oscillating tableaux into the set of $(n+1)$-symplectic oscillating
  tableaux.  Since rotation and reversal preserve the maximal
  cardinality of a crossing set of diagonals in the chord diagram, we
  obtain the remarkable fact that $\pr\iota(\osc) = \iota(\pr\osc)$
  and $\ev\iota(\osc) = \iota(\ev\osc)$.
\end{rmk}

\subsection{The adjoint representation of the general linear groups}
\label{sec:adjoint-results}

We recall from example~\ref{eg:alternating} Stembridge's definition
of $\GL(n)$-alternating tableaux, which are in bijection with the
highest weight words in a tensor power of the crystal corresponding
to the adjoint representation of $\GL(n)$.

\begin{defn}[Stembridge~\cite{MR899903}]\label{defn:alternating}
  A \Dfn{staircase} is a dominant weight of $\GL(n)$, that is, a
  vector in $\ZZ^n$ with weakly decreasing entries.  A
  \Dfn{$\GL(n)$-alternating tableau} $\alt$ of length $r$ and shape
  $\wgt$ is a sequence of staircases
  \[
  \alt=(\emptyset\!=\!\wgt^0,\wgt^1,\ldots,\wgt^{2r}\!=\!\wgt)
  \]
  such that
  \begin{itemize}\setlength{\itemsep}{0pt}
  \item[] for even $i$, $\wgt^{i+1}$ is obtained from $\wgt^i$ by
    adding $1$ to an entry, and
  \item[] for odd $i$, $\wgt^{i+1}$ is obtained from $\wgt^i$ by
    subtracting $1$ from an entry.
  \end{itemize}
\end{defn}

In Section~\ref{sec:growth-diagrams}, we introduce a bijection
similar in spirit to Sundaram's.  It maps an alternating tableau
$\alt$ of length $r$ and shape $\mu$ to a triple
$\big(\Perm(\alt), \Perm_P(\alt), \Perm_Q(\alt)\big)$, consisting of
a bijection $\Perm(\alt):R\to S$ between two subsets of
$\{1,\dots,r\}$, and two partial standard Young tableaux
$\Perm_P(\alt)$ and $\Perm_Q(\alt)$.  The shapes of these tableaux
are obtained by separating the positive and negative entries of
$\mu$.  The entries of the first tableau then form the complementary
subset of $R$, the entries of the second form the complementary
subset of $S$.

For a bijection $\pi: R\to S$ between two subsets of $\{1,\dots,r\}$,
the \Dfn{reverse-complement} $\reco\pi$ maps $r+1-i$ to $r+1-\pi(i)$.

\begin{thm}\label{thm:s1q-partial-permutations}
  Let $\alt$ be a $\GL(n)$-alternating tableau of length
  $r\leq \lfloor\frac{n+1}{2}\rfloor$.  Then $\Perm(\ev\alt)$ is the
  reverse-complement of $\Perm(\alt)$, and $\Perm_P(\ev\alt)$ and
  $\Perm_Q(\ev\alt)$ are obtained by applying Schützenberger's
  evacuation to $\Perm_P(\alt)$ and $\Perm_Q(\alt)$ respectively:
  \[
  \big(\Perm_P(\ev\alt), \Perm_Q(\ev\alt)\big) =
  \big(\ev\Perm_P(\alt), \ev\Perm_Q(\alt)\big).
  \]
\end{thm}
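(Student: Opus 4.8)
The plan is to deduce Theorem~\ref{thm:s1q-partial-permutations} from the general machinery developed in Section~\ref{sec:crystals-local-rules} (the cactus group action on highest weight words via van Leeuwen's local rules) together with the explicit combinatorial description of the bijection $\alt\mapsto(\Perm(\alt),\Perm_P(\alt),\Perm_Q(\alt))$ from Section~\ref{sec:growth-diagrams}. The key point is that evacuation is the single cactus generator $\cactus_{1,2r}$ acting on the length-$2r$ highest weight word for the minuscule crystals $V$ and $V^\ast$, so by the local-rule formulation it can be computed by filling in the triangular array obtained by reversing the alternating tableau and propagating the local rules. I would first recall that for a word built from crystals of minuscule representations, $\ev$ reverses the word as a sequence of letters and then re-reduces to highest weight form, and that at the level of the weight sequence this is exactly van Leeuwen's ``rotation of the growth diagram'' statement (Lenart); this gives a clean handle on $\wgt^p\mapsto\wgt^{2r-p}$ up to the correction coming from re-sorting into dominant form.

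Next I would exploit the product structure of the growth-diagram bijection. The bijection $\Perm$ is, by construction in Section~\ref{sec:growth-diagrams}, a Fomin-style growth diagram for an RSK-type correspondence, and the hypothesis $r\leq\lfloor\frac{n+1}{2}\rfloor$ guarantees that $n$ is large enough that no wrap-around or column-limit phenomena interfere — precisely the ``$n\geq r$'' type condition mentioned for Theorem~\ref{thm:rotation-promotion-permutations}. Under this hypothesis the alternating tableau decomposes cleanly: the positive half of each staircase $\wgt^p$ records one standard-tableau-like growth and the negative half records another, and the bijection part $\Perm(\alt):R\to S$ is read off from the cells that get removed rather than added. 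I would then invoke the classical fact (Schützenberger; Fomin's growth diagrams) that evacuation of a standard Young tableau corresponds to rotating/reflecting its growth diagram by $180\degree$, which simultaneously yields $\ev\Perm_P(\alt)=\Perm_P(\ev\alt)$, $\ev\Perm_Q(\alt)=\Perm_Q(\ev\alt)$, and — because reversing the word also swaps the roles of ``insertion'' and ``recording'' indices and reflects positions $i\mapsto r+1-i$ — the reverse-complement statement $\Perm(\ev\alt)=\reco\Perm(\alt)$. Concretely: a pair $(i,\pi(i))$ in $\Perm(\alt)$ arises from a matched insertion/bumping event at positions $i<\pi(i)$ (or the reverse), and reversing the word sends this event to positions $2r+1-\cdot$, which at the level of the index set $\{1,\dots,r\}$ is $i\mapsto r+1-i$, giving exactly the map $r+1-i\mapsto r+1-\pi(i)$.

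The main obstacle I expect is bookkeeping the interaction between the two ``halves'' of an alternating tableau (the $+e_k$ step and the $-e_\ell$ step within a single letter $w_i=(e_k,\text{-}e_\ell)$) under evacuation of the length-$2r$ word. Evacuation reverses the full length-$2r$ sequence, so it does not simply reverse the length-$r$ sequence of letters: each letter $(e_k,\text{-}e_\ell)$ becomes, after reversal and re-reduction, something governed by $(-e_\ell)$ then $(e_k)$, i.e.\ the dual pair in reversed internal order. I would handle this by checking that the local rules for the $V\otimes V^\ast$ crystal are symmetric under the involution that swaps $V\leftrightarrow V^\ast$ and reverses tensor order — this is where the large-$n$ hypothesis is genuinely used, since it ensures the relevant partitions never collide — so that reversing the length-$2r$ word and then regrouping into length-$r$ letters of the \emph{same} form is consistent, and the net effect on $(\Perm,\Perm_P,\Perm_Q)$ is precisely reverse-complement on the permutation and evacuation on each tableau. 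Once this symmetry is verified, the theorem follows by combining it with the classical growth-diagram description of tableau evacuation; the remaining steps are routine verification of the index arithmetic $i\mapsto r+1-i$ already sketched above.
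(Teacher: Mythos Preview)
Your proposal has the right instincts but a genuine gap: you never explain why the hypothesis is $r\leq\lfloor\frac{n+1}{2}\rfloor$ rather than merely $n\geq r$, and you even describe it as ``precisely the `$n\geq r$' type condition'', which it is not. If your symmetry argument worked as stated it would apply whenever the positive and negative parts of the staircases do not collide, i.e.\ whenever $\extent(\alt)\leq n$. But the paper exhibits an explicit counterexample in that regime: the $\GL(2)$-alternating tableau $(\emptyset,10,1\bar1,10,1\bar1)$ of length~$2$ is fixed by evacuation, yet $\Perm(\alt)=\{(2,1)\}$ is not its own reverse-complement. So any correct argument must isolate where the factor of two enters, and yours does not.

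The paper's route is quite different from what you sketch. It first establishes the \emph{empty-shape} case (Theorem~\ref{thm:ev-rc}) with the weaker bound $n\geq r$ (or $n\geq r-1$ for $n$ odd), via a substantial analysis: Theorem~\ref{thm:growth-diagram-rotation} shows that the staircase local rule $\mu=\dom_{\fS_n}(\kappa+\nu-\lambda)$ governs column rotation of the filling, and Lemma~\ref{lem:alternating-equalities} and Corollary~\ref{cor:relate-fillings} then show that the filling $\Perm(\alt)$ can be read directly off the evacuation diagram through the placement of the symbols $\poscross,\negcross,\fixcross$, so that the mirror symmetry of that diagram makes the $180^\circ$ rotation under $\ev$ transparent. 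The general (arbitrary-shape) case is then deduced by \emph{extending} $\alt$ to an alternating tableau $\tilde\alt$ of empty shape and length $2r$, obtained by appending the reversal of $\alt$. Applying the empty-shape result to $\tilde\alt$ requires $n\geq 2r$ (resp.\ $2r-1$), which is exactly the hypothesis $r\leq\lfloor\frac{n+1}{2}\rfloor$; one reads off $\ev\alt$ from the first $2r+1$ staircases of $\pr^{(r)}(\ev\tilde\alt)$, and the $180^\circ$ rotation of the extended filling yields both the reverse-complement on the partial permutation and, via Proposition~\ref{prop:rotate-filling}, Sch\"utzenberger evacuation on $\Perm_P$ and $\Perm_Q$. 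Your sketch omits both the extension step and the case analysis underlying Theorem~\ref{thm:growth-diagram-rotation}; the ``check a symmetry'' formulation in your last paragraph does not substitute for either.
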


Similarly to Sundaram's map between oscillating tableaux and
matchings, if the alternating tableau $\alt$ is of empty shape the
tableaux $\Perm_P(\alt)$ and $\Perm_Q(\alt)$ are empty and
$\Perm(\alt)$ is a permutation.  We use chord diagrams to visualise a
permutation $\pi$, drawing an arc from vertex $i$ to vertex $\pi(i)$
in a counterclockwise labelled regular $r$-gon, see
Figure~\ref{fig:chord-diagrams}.

Using this visualisation, the \Dfn{reverse-complement} of a
permutation is obtained by reflecting the diagonals of the diagram on
a well-chosen axis.  The \Dfn{rotation} $\rot\pi$ of a permutation
$\pi$ is obtained by replacing each arc $\big(i, \pi(i)\big)$ with
the arc \mbox{$\big(i\pmod r +1, \pi(i)\pmod r + 1\big)$}.  Visually,
this corresponds to a rotation of the diagonals of the
diagram.

\begin{thm}\label{thm:rotation-promotion-permutations}
  For $n\geq r-1$ and also for $n\leq 2$ the bijection $\Perm$
  between $\GL(n)$-alternating tableaux of empty shape of length $r$
  and permutations intertwines promotion and rotation:
  \[
  \rot\Perm(\alt) = \Perm(\pr\alt).
  \]
  For even $n\geq r$ and for odd $n\geq r-1$, it intertwines
  evacuation and reverse-complement:
  \[
  \reco\Perm(\alt) = \Perm(\ev\alt).
  \]
  For $n\leq2$ and arbitrary $r$, it intertwines evacuation and
  inverse-reverse-complement:
  \[
  \reco\Perm(\alt)^{-1} = \Perm(\ev\alt).
  \]
\end{thm}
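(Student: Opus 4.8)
The plan is to reduce everything to the action of the single family of generators $\cactus_{1,q}$, which by Lemma~\ref{lem:extend-s_1q} already generate $\fC_r$, and whose effect on an alternating tableau is \emph{local}: as the local rules of Section~\ref{sec:crystals-local-rules} show, $\cactus_{1,q}\,\alt$ is obtained from $\alt=(\wgt^0,\dots,\wgt^{2r})$ by replacing the prefix $(\wgt^0,\dots,\wgt^{2q})$ with its length-$q$ evacuation and leaving $(\wgt^{2q},\dots,\wgt^{2r})$ unchanged. The growth-diagram description of $\Perm$ from Section~\ref{sec:growth-diagrams} is compatible with passing to prefixes: the sub-diagram on the first $q$ columns computes the triple $\big(\Perm,\Perm_P,\Perm_Q\big)$ of the length-$q$ prefix. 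Combining these two facts with Theorem~\ref{thm:s1q-partial-permutations} applied to prefixes will translate each $\cactus_{1,q}$, hence $\pr$ and $\ev$, into an explicit operation on the permutation side. Since $\rot$ and $\reco$ on permutations do not involve $n$, and since the natural embedding $\iota$ of $\GL(n)$- into $\GL(n+1)$-alternating tableaux commutes with $\pr$ and $\ev$ (as cactus-group actions on compatible crystal embeddings) and with $\Perm$ (the local rules are unchanged when $n$ grows, provided no coordinate is truncated), it suffices to treat a single value of $n$ in the range $n\geq 2r-1$ covered by Theorem~\ref{thm:s1q-partial-permutations}, together with a separate bookkeeping of exactly which permutations lie in the image of $\Perm$ for each $n$; this last point is what produces the precise thresholds in the statement.

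\textbf{Evacuation.}
This is now almost immediate: $\ev=\cactus_{1,r}$, so Theorem~\ref{thm:s1q-partial-permutations} gives $\big(\Perm(\ev\alt),\Perm_P(\ev\alt),\Perm_Q(\ev\alt)\big)=\big(\reco\Perm(\alt),\ev\Perm_P(\alt),\ev\Perm_Q(\alt)\big)$ whenever the full length-$r$ tableau satisfies the hypothesis $r\leq\lfloor\frac{n+1}{2}\rfloor$; the stability argument above transports this down to all $n$ in the claimed range, and when $\alt$ is of empty shape the partial tableaux are empty, so only the permutation part survives. The even/odd refinement $n\geq r$ versus $n\geq r-1$ will come out of the analysis of when $\Perm$ restricted to empty-shape tableaux is still onto $\mathfrak S_r$.

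\textbf{Promotion.}
Here I use the cactus relation $\cactus_{1,r}\cactus_{2,r}=\cactus_{1,r-1}\cactus_{1,r}$, so that $\pr=\cactus_{1,r-1}\circ\ev$. Writing $\beta=\ev\alt$ we have $\Perm(\beta)=\reco\Perm(\alt)$ by the previous step, and $\cactus_{1,r-1}$ then evacuates the length-$(r-1)$ prefix of $\beta$; by prefix-compatibility of $\Perm$ and Theorem~\ref{thm:s1q-partial-permutations} applied to that prefix, this amounts on the permutation side to reverse-complementing the partial bijection recorded by the first $r-1$ columns of the growth diagram of $\beta$ and reinserting the point carried by the last column. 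The remaining, purely combinatorial, step is to check that this composite operation on $\mathfrak S_r$ is exactly $\rot$. I expect this to be the main obstacle: the length-$(r-1)$ prefix records only a partial bijection together with recording tableaux $\Perm_P,\Perm_Q$ of shapes given by the positive and negative parts of $\wgt^{2r-2}$, and tracking which point is bumped out when the last column is removed, and where it re-enters after the two reversals, is the delicate bookkeeping — it is also where the inequality $n\geq r-1$ is forced, so that the prefix still satisfies $r-1\leq\lfloor\frac{n+1}{2}\rfloor$ after the reduction in $n$ and enough columns remain for every permutation to occur. In parallel, as a check, I would compute directly via van Leeuwen's local rules the effect on the growth diagram of deleting the first position and re-attaching it at the end (which underlies $\pr$) and observe that on the chord diagram this moves the arc incident to $1$ to become the arc incident to the new last vertex, i.e.\ precisely $\rot$.

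\textbf{The cases $n\leq 2$.}
These must be handled separately, since there $\Perm$ is no longer onto all of $\mathfrak S_r$ and the intertwining picks up an inversion. For $n=1$ the adjoint representation is trivial and the statement degenerates. For $n=2$ the adjoint representation of $\GL(2)$ is the direct sum of the adjoint representation of $\SL(2)$ and a trivial summand, so $\GL(2)$-alternating tableaux of empty shape correspond to highest weight words for $\SL(2)$, and $\Perm$ becomes a relabelling of the classical bijection between such words and noncrossing partitions without singletons; on that model evacuation acts by a reflection of the chord diagram. Because the arcs of the $\SL(2)$ picture are unoriented while those of a permutation carry the orientation $i\mapsto\pi(i)$, this reflection reads on permutations as $\pi\mapsto\reco(\pi)^{-1}=\reco(\pi^{-1})$ rather than $\reco\pi$, which accounts for the extra inverse; the promotion statement follows the same way from the classical fact that promotion acts by rotation in the $\SL(2)$ model. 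Assembling the generic range $n\geq r-1$ (with the even/odd refinement for evacuation) together with the exceptional range $n\leq 2$ completes the proof.
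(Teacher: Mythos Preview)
Your central reduction step is false. You assert that ``the natural embedding $\iota$ of $\GL(n)$- into $\GL(n+1)$-alternating tableaux commutes with $\pr$ and $\ev$ (as cactus-group actions on compatible crystal embeddings)'', and then propose to work at a single large value of $n$ and transport the result down. The paper explicitly warns against exactly this move in the remark following the statement: padding each staircase with a zero does \emph{not} commute with promotion, and the six boxed vectors in diagram~\eqref{eq:pr-adjoint-example} give a concrete counterexample (the bottom-right entry is $10\bar1$ for $\GL(3)$ but becomes $11\bar1\bar1$ after padding to $\GL(4)$). The crystal embedding $V\otimes V^\ast\hookrightarrow W\otimes W^\ast$ (with $W$ the $\GL(n+1)$ vector representation) is not a morphism of crystals that respects the tensor factors in the right way, so the cactus action is not inherited.

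What the paper does instead is prove a \emph{conditional} stability result (Theorem~\ref{thm:stability}): if one already knows that every staircase in both $\alt$ and $\pr\alt$ has at most $m$ nonzero parts, then one may drop $n-m$ zeros. The hypothesis on $\pr\alt$ is not automatic; it is established by first proving $\Perm(\pr\alt)=\rot\Perm(\alt)$ directly for $n\ge r$ via Theorem~\ref{thm:growth-diagram-rotation}, which compares the van~Leeuwen local rule $\dom_{\fS_n}$ to column rotation of the growth-diagram filling, and then analysing the unique extremal tableau of extent $r$ (Lemma~\ref{lem:bound}) to push the bound to $n\ge r-1$. So the logical order is the reverse of yours: Theorem~\ref{thm:s1q-partial-permutations} is deduced from the empty-shape rotation result (by doubling $\alt$ to an empty-shape tableau of length $2r$), not used to prove it. This also explains why your arithmetic comes out wrong: applying Theorem~\ref{thm:s1q-partial-permutations} to the length-$(r{-}1)$ prefix requires $r-1\le\lfloor\tfrac{n+1}{2}\rfloor$, i.e.\ $n\ge 2r-3$, not $n\ge r-1$. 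Finally, for $n=2$ the image of $\Perm$ is all noncrossing set partitions, including singletons (fixed points), and the paper's argument that $\ev$ is literal reversal of the tableau---hence inverse-reverse-complement on the permutation---again passes through stability plus the $n\ge r$ case, not through an $\SL(2)$ crystal embedding.
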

\begin{rmk}
  Note that the case $n\leq 2$ is special.  As we will show in
  Section~\ref{sec:GL2}, our bijection identifies
  $\GL(2)$-alternating tableaux of empty shape in a natural way with
  noncrossing partitions, which form an invariant set under rotation.
  In fact, this set coincides with the web basis for $\GL(2)$.
  Moreover, in this case the evacuation of an alternating tableau is
  simply its reversal. In terms of noncrossing set partitions, the
  inverse-reverse-complement of the corresponding permutation is the
  mirror image of the set partition.
\end{rmk}

\begin{rmk}
  It is tempting to regard a $\GL(n)$-alternating tableau as a
  sequence of pairs of partitions by separating the positive and
  negative entries. Indeed, this is what we will do in
  Sections~\ref{sec:growth-diagrams} and~\ref{sec:proofs} to define
  our bijection and prove Theorems~\ref{thm:s1q-partial-permutations}
  and~\ref{thm:rotation-promotion-permutations}.

  One might then think that promotion can be defined directly in
  terms of these sequences without reference to $n$.  However, this
  is not the case.  For $n>2$, promotion does not preserve the
  maximal number of non-zero entries in a vector of an alternating
  tableau.

  In fact, it is not clear whether there is an embedding $\iota$ of
  the set of $\GL(n)$-alternating tableaux in the set of
  $\GL(n+1)$-alternating tableaux such that
  $\pr\iota(\alt) = \iota(\pr\alt)$.  In spite of this, we prove a
  certain stability phenomenon for promotion of alternating tableaux
  in Theorem~\ref{thm:stability}.
\end{rmk}

\section{The cactus groups, local rules, promotion and evacuation}
\label{sec:crystals-local-rules}
In this section, following Henriques and Kamnitzer~\cite{MR2219257},
we define promotion and evacuation of highest weight words as an
action of certain elements of the $r$-fruit cactus group on $r$-fold
tensor products of crystals.  Then, following van
Leeuwen~\cite{MR1661263} and Lenart~\cite{MR2361854} we encode the
action of the cactus group by certain local rules, generalising
Fomin's.

\subsection{The cactus group and its action}
Let us first define the cactus groups.
\begin{defn}\label{dfn:cactus-groups-generators}
  The \Dfn{$r$-fruit cactus group}, $\fC_r$, has generators $\cactus_{p,q}$
  for $1\le p<q\le r$ and defining relations
  \begin{itemize}\setlength{\itemsep}{0pt}
  \item $\cactus_{p,q}^2=1$
  \item $\cactus_{p,q}\, \cactus_{k,l}=\cactus_{k,l}\, \cactus_{p,q}$ if $q<k$ or $l<p$
  \item $\cactus_{p,q}\, \cactus_{k,l}=\cactus_{p+q-l,p+q-k}\, \cactus_{p,q}$ if $p\le k<l\le q$
  \end{itemize}
  For convenience we additionally define $\cactus_{p,p}=1$.
\end{defn}

It may be useful to think of the generators as being indexed by
intervals in $\{1,\dots,r\}$.  Then the second relation can be
rephrased by saying that two generators commute if they are indexed
by disjoint intervals.  The third relation is applicable when one
interval is contained in the other, in which case the inner interval
is reflected within the outer interval.

The following lemma shows that it is sufficient to define the action
of the composites $\cactus_{1,q} \cactus_{2,q}$ for $2\leq q\leq r$.
The first relation was observed by White~\cite[Lem.~2.3]{MR3845719},
the second is in analogy to Schützenberger's original definition of
evacuation of standard Young tableaux in~\cite[Sec.~5]{MR0190017}.
\begin{lemma}\label{lem:extend-s_1q}
  We have
  \[
    \cactus_{p,q} = \cactus_{1,q}\cactus_{1,q-p+1}\cactus_{1,q}%
    \quad\text{and}\quad%
    \cactus_{1,q} = \cactus_{1,2} \cactus_{2,2}\; \cactus_{1,3} \cactus_{2,3}\; \dots\; \cactus_{1,q} \cactus_{2,q}.
  \]
\end{lemma}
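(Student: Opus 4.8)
My plan is to prove the two identities purely formally, working inside the abstract cactus group $\fC_r$ using only the three defining relations of Definition~\ref{dfn:cactus-groups-generators}, together with the convention $\cactus_{p,p}=1$. The key tool is the third relation, which in the special case of a generator $\cactus_{1,q}$ acting by conjugation on a generator $\cactus_{k,l}$ with $1\le k<l\le q$ gives $\cactus_{1,q}\,\cactus_{k,l}\,\cactus_{1,q} = \cactus_{q+1-l,\,q+1-k}$; informally, conjugation by $\cactus_{1,q}$ reflects the interval $[k,l]$ inside $[1,q]$. I will treat the two formulas separately.

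For the first identity $\cactus_{p,q} = \cactus_{1,q}\cactus_{1,q-p+1}\cactus_{1,q}$, I would simply apply the reflection principle above with the inner interval being $[1,\,q-p+1]$ (the case $k=1$, $l=q-p+1$ of the third relation, which is legitimate since $1\le q-p+1\le q$). Reflecting $[1,q-p+1]$ inside $[1,q]$ sends it to $[q+1-(q-p+1),\,q+1-1] = [p,q]$, so $\cactus_{1,q}\,\cactus_{1,q-p+1}\,\cactus_{1,q} = \cactus_{p,q}$ directly. Here one should also check the boundary case $p=q$: then $q-p+1=1$ and $\cactus_{1,1}=1$, so the right side is $\cactus_{1,q}^2 = 1 = \cactus_{q,q}$, consistent with the convention.

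For the second identity $\cactus_{1,q} = (\cactus_{1,2}\cactus_{2,2})(\cactus_{1,3}\cactus_{2,3})\cdots(\cactus_{1,q}\cactus_{2,q})$, I would argue by induction on $q$. The base case $q=2$ reads $\cactus_{1,2} = \cactus_{1,2}\cactus_{2,2} = \cactus_{1,2}$ since $\cactus_{2,2}=1$. For the inductive step, denote by $P_{q-1}$ the product $(\cactus_{1,2}\cactus_{2,2})\cdots(\cactus_{1,q-1}\cactus_{2,q-1})$, so by the inductive hypothesis $P_{q-1} = \cactus_{1,q-1}$, and the claimed right-hand side for $q$ is $P_{q-1}\,\cactus_{1,q}\,\cactus_{2,q} = \cactus_{1,q-1}\,\cactus_{1,q}\,\cactus_{2,q}$. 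It therefore suffices to show $\cactus_{1,q-1}\,\cactus_{1,q}\,\cactus_{2,q} = \cactus_{1,q}$, equivalently $\cactus_{1,q}\,\cactus_{1,q-1}\,\cactus_{1,q} = \cactus_{2,q}$ (using $\cactus_{1,q}^2=1$ and $\cactus_{2,q}^2=1$ to move factors across). But $\cactus_{1,q}\,\cactus_{1,q-1}\,\cactus_{1,q} = \cactus_{2,q}$ is exactly the first identity of this lemma with $p=2$, which we have already established. This closes the induction.

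The whole argument is a short formal manipulation, so there is no serious obstacle; the only points requiring care are the degenerate interval cases ($p=q$, and $q-p+1=1$) where the convention $\cactus_{p,p}=1$ must be invoked, and making sure that each application of the third relation satisfies its hypothesis $1\le k<l\le q$ (or its boundary analogue). One should also double-check the direction of the reflection map $[k,l]\mapsto[q+1-l,q+1-k]$ against the relation as stated, since a transcription error there would propagate; writing out the $q=3$ case explicitly, $\cactus_{1,3}\cactus_{1,2}\cactus_{1,3}=\cactus_{2,3}$, is a convenient sanity check.
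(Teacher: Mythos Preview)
Your proof is correct and follows essentially the same approach as the paper. Both identities are derived directly from the third cactus relation: for the first you conjugate $\cactus_{1,q-p+1}$ by $\cactus_{1,q}$ while the paper conjugates $\cactus_{p,q}$, which amounts to the same substitution read in reverse; for the second the paper records the equivalent form $\cactus_{1,\ell}\cactus_{2,\ell}=\cactus_{1,\ell-1}\cactus_{1,\ell}$ and telescopes the product rather than inducting, but your inductive step reduces to precisely this identity.
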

\begin{proof}
  The first equality is obtained from the third defining relation by
  replacing $p,q,k,\ell$ with $1,q,p$ and $q$ respectively.  The
  second equality follows from
  $\cactus_{1,\ell} \cactus_{2,\ell} = \cactus_{1,\ell-1}
  \cactus_{1,\ell}$,
  which is also an instance of the third defining relation.
\end{proof}

Henriques and Kamnitzer~\cite{MR2219257} defined an action of the
cactus group on $r$-fold tensor products of crystals in terms of the
commutor, which in turn is defined using Lusztig's involution.  Let
us first briefly recall the latter, as introduced
in~\cite{MR1182165}.
\begin{defn}
  Let $B$ be a crystal graph associated with an irreducible
  representation.  \Dfn{Lusztig's involution} $\lusztig$ maps the
  unique highest weight vertex of $B$ to its unique lowest weight
  vertex, and the Kashiwara lowering operator $f_i$ to the Kashiwara
  raising operator $e_{i^\ast}$, where $i\mapsto i^\ast$ is the
  Dynkin diagram automorphism specified by
  $\alpha_{i^\ast} = -w_0(\alpha_i)$, and $w_0$ is the longest
  element of the Weyl group.  This definition is extended to
  arbitrary crystals by applying the involution to each connected
  component separately.
\end{defn}

Lusztig's involution is not a morphism of crystals, which would have
to map highest weight vertices to highest weight vertices.  For the
Cartan type $A_n$ crystal $B_\lambda$ of semistandard Young tableaux
of shape $\lambda$, Lusztig's involution is precisely
Schützenberger's evacuation of semistandard Young tableaux with
largest entry at most $n+1$.

\begin{defn}
  For two crystals $A$ and $B$, the \Dfn{commutor} is the crystal morphism
\begin{align*}
&\sigma_{A,B}: A\otimes B\to B\otimes A\\
&a\,b\mapsto \lusztig\big(\lusztig(b)\,\lusztig (a)\big).
\end{align*}
\end{defn}
We can now define the action of the cactus group.
\begin{defn}\label{dfn:cactus-group-action}
  The action of $\fC_r$ on words in $C_1\otimes\dots\otimes C_r$ is
  defined inductively by letting $\cactus_{p,p+1}$ act as
  $1\otimes\sigma_{C_p,C_{p+1}}\otimes 1$ and $\cactus_{p,q}$ as
  $(1\otimes\sigma_{C_p,C_{p+1}\otimes\dots\otimes C_q}\otimes 1
  )\circ\cactus_{p+1, q}$
  for $q > p+1$, where $1$ denotes the identity map on a crystal.
\end{defn}

The action can be expressed more explicitly directly in terms of
Lusztig's involution.
\begin{prop}
  Let $w=w_1\dots w_r$ be a word in $C_1\otimes\dots\otimes C_r$,
  then
    \[
    \cactus_{p,q} w_1\dots w_r = w_1\dots w_{p-1}\lusztig\big(\lusztig (w_q)\lusztig(w_{q-1})\dots\lusztig(w_p)\big) w_{q+1}\dots w_r.
    \]
\end{prop}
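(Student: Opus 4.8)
The plan is to prove the formula by induction on $q-p$, following the inductive definition of the cactus group action in Definition~\ref{dfn:cactus-group-action}.

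In the base case $q=p+1$, the generator $\cactus_{p,p+1}$ acts as $1\otimes\sigma_{C_p,C_{p+1}}\otimes 1$, so the defining formula for the commutor, applied to the two letters $w_p\,w_{p+1}$, gives
\[
\cactus_{p,p+1}\,w_1\dots w_r = w_1\dots w_{p-1}\,\lusztig\big(\lusztig(w_{p+1})\,\lusztig(w_p)\big)\,w_{p+2}\dots w_r,
\]
which is exactly the asserted expression for this interval.

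For the inductive step I would assume the claim whenever the interval of indices has fewer than $q-p+1$ elements. By Definition~\ref{dfn:cactus-group-action}, $\cactus_{p,q}$ acts as $(1\otimes\sigma_{C_p,D}\otimes 1)\circ\cactus_{p+1,q}$, where $D=C_q\otimes C_{q-1}\otimes\dots\otimes C_{p+1}$ is the crystal that the block in positions $p+1,\dots,q$ occupies after $\cactus_{p+1,q}$ has acted. Applying the induction hypothesis to $\cactus_{p+1,q}$, this block is carried to the single element
\[
u=\lusztig\big(\lusztig(w_q)\,\lusztig(w_{q-1})\dots\lusztig(w_{p+1})\big)\in D,
\]
while the letters $w_1\dots w_p$ and $w_{q+1}\dots w_r$ are left unchanged. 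Then I would apply $\sigma_{C_p,D}$ to $w_p\,u$; by the defining formula for the commutor this is $\lusztig\big(\lusztig(u)\,\lusztig(w_p)\big)$, and since Lusztig's involution on the crystal $D$ is an involution, $\lusztig(u)=\lusztig(w_q)\,\lusztig(w_{q-1})\dots\lusztig(w_{p+1})$. Substituting, the block becomes $\lusztig\big(\lusztig(w_q)\,\lusztig(w_{q-1})\dots\lusztig(w_{p+1})\,\lusztig(w_p)\big)$, now viewed inside $C_q\otimes\dots\otimes C_{p+1}\otimes C_p = C_q\otimes\dots\otimes C_p$, which is precisely the claimed formula for the interval $p,\dots,q$; the remaining letters are still untouched, so the induction closes.

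The only real subtlety — and hence the step to be most careful about — is the bookkeeping of tensor factors: one must track that $\cactus_{p+1,q}$ has already internally reversed the block $C_{p+1},\dots,C_q$, so that the second argument of the outer commutor is $D=C_q\otimes\dots\otimes C_{p+1}$ rather than $C_{p+1}\otimes\dots\otimes C_q$, and one uses that Lusztig's involution, defined component-wise on connected components, restricts to a genuine involution on the tensor-product crystals $D$ and $D\otimes C_p$ that occur. Notably, no formula describing how $\lusztig$ permutes tensor factors is needed; only its being an involution on each crystal is used, so the argument requires nothing beyond Definition~\ref{dfn:cactus-group-action} and the definition of the commutor.
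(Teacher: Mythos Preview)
Your argument is correct and is exactly the approach the paper indicates: induction on $q-p$, unwinding the inductive definition of the action via the commutor and using that $\lusztig$ is an involution. You have simply written out in full what the paper compresses into a single sentence.
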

\begin{proof}
  This follows by induction on $q-p$ and the fact that $\lusztig$ is
  an involution.
\end{proof}

\begin{defn}
  The \Dfn{promotion} $\pr w$ of $w$ is
  $\cactus_{1,r}\cactus_{2,r} w$, and the \Dfn{evacuation} $\ev w$ of
  $w$ is $\cactus_{1,r} w$.  For a tableau $\mathcal T$ corresponding
  to a highest weight word $w$, we use $\pr\mathcal T$ and
  $\ev\mathcal T$ to denote the tableaux corresponding to $\pr w$ and
  $\ev w$.
\end{defn}

It will be convenient to express promotion as a commutor, as follows.
\begin{prop}
  $\cactus_{1,q}\cactus_{2,q}(w)=\sigma_{C_1,C_2\otimes\dots\otimes C_q}(w)$.
\end{prop}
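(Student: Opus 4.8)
The plan is to unwind both sides using the explicit description of the cactus-group action in terms of Lusztig's involution that was just established, and to check that the resulting words agree. By Definition~\ref{dfn:cactus-group-action} applied with $p=1$ and $q$ in place of $q$, the operator $\cactus_{1,q}$ acts on $w=w_1\dots w_q$ as $(\sigma_{C_1,C_2\otimes\dots\otimes C_q})\circ\cactus_{2,q}$, provided $q>2$; for $q=2$ it acts directly as $\sigma_{C_1,C_2}$. Composing on the right with $\cactus_{2,q}$ and using that $\cactus_{2,q}^2=1$ (the first defining relation of Definition~\ref{dfn:cactus-groups-generators}) immediately gives
\[
\cactus_{1,q}\cactus_{2,q}(w)=\sigma_{C_1,C_2\otimes\dots\otimes C_q}(w)
\]
for $q>2$, and the case $q=2$ is the base case where $\cactus_{2,2}=1$ by convention and the claim reads $\cactus_{1,2}(w)=\sigma_{C_1,C_2}(w)$, which is the very definition of how $\cactus_{1,2}$ acts.

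First I would record the base case $q=2$ separately, observing that the additional convention $\cactus_{p,p}=1$ makes the statement literally true there. Then for $q\geq 3$ I would invoke the inductive clause of Definition~\ref{dfn:cactus-group-action}, which writes $\cactus_{1,q}=\big(1\otimes\sigma_{C_1,C_2\otimes\dots\otimes C_q}\otimes 1\big)\circ\cactus_{2,q}$ — here both outer identity factors are trivial since $p=1$ and $q=r$ in the relevant application, so this is just $\sigma_{C_1,C_2\otimes\dots\otimes C_q}\circ\cactus_{2,q}$. Post-composing with $\cactus_{2,q}$ and cancelling the square then finishes the argument. Alternatively, and perhaps more transparently, one can compare both sides letter by letter using the explicit formula from the proposition above: the left side is $\cactus_{1,q}$ applied to $\cactus_{2,q}(w) = w_1\,\lusztig\big(\lusztig(w_q)\dots\lusztig(w_2)\big)$, and applying $\cactus_{1,q}$ to this two-block word (treating $C_2\otimes\dots\otimes C_q$ as a single crystal) reverses and re-Lusztigs the whole thing, which by involutivity of $\lusztig$ recovers exactly $\sigma_{C_1,C_2\otimes\dots\otimes C_q}(w)$.

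There is essentially no obstacle here; the only point requiring a moment's care is the bookkeeping around the degenerate cases ($q=2$, and the $\cactus_{p,p}=1$ convention) and the observation that the tensor factor $C_2\otimes\dots\otimes C_q$ may legitimately be regarded as a single crystal so that the commutor $\sigma_{C_1,C_2\otimes\dots\otimes C_q}$ is well-defined and its action is governed by Lusztig's involution on each connected component. I would therefore keep the proof to two or three lines: state the $q=2$ base case, then for $q\geq 3$ apply the inductive clause of the action and cancel $\cactus_{2,q}^2=1$.
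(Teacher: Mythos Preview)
Your argument is correct and matches the paper's proof exactly: both use the inductive clause of Definition~\ref{dfn:cactus-group-action} to write $\cactus_{1,q}=\sigma_{C_1,C_2\otimes\dots\otimes C_q}\circ\cactus_{2,q}$ and then cancel $\cactus_{2,q}^2=1$. The paper's version is simply terser, omitting the separate treatment of $q=2$ and the alternative computation via the explicit Lusztig-involution formula.
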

\begin{proof}
  This is immediate from the definition of the action of
  $\cactus_{1,q}$ and from the fact that $\cactus_{2,q}^2 = 1$.
\end{proof}

\subsection{Promotion and evacuation via local rules}

We now follow Lenart's approach~\cite{MR2361854} and realise the
action of the cactus group using van Leeuwen's local
rules~\cite[Rule~4.1.1]{MR1661263}, which generalise
Fomin's~\cite[A~1.2.7]{MR1676282}.

From now on we restrict ourselves to minuscule representations.
However, let us remark that van Leeuwen also provides a local rule
that applies to quasi-minuscule representations, which makes our
approach viable for arbitrary representations.

For minuscule representations, van Leeuwen's rules involve obtaining
the unique dominant representative of a weight.
\begin{defn}\label{def:dominant-weight}
  Let $\lambda$ be a weight of a representation of a Lie group with
  Weyl group $W$.  Then \Dfn{$\dom_W(\lambda)$} is the unique
  dominant representative of the $W$-orbit $W\lambda$.
\end{defn}
\begin{example}
  The Weyl group of $\SL(n)$ is the symmetric group $\fS_n$.  Thus,
  $\dom_{\fS_n}$ returns its argument sorted into decreasing order.
\end{example}
\begin{example}
  The Weyl group of $\Sp(2n)$ is the hyperoctahedral group $\fH_n$ of
  signed permutations of $\{\pm 1,\dots,\pm n\}$.  Thus, the dominant
  representative $\dom_{\fH_n}(\lambda)$ of a weight $\lambda$ is
  obtained by sorting the absolute values of its entries into
  decreasing order.
\end{example}

We can now define the local rule.
\begin{defn}\label{def:local-rules}
  Let $A$ be a crystal and $B$ and $C$ be crystals of minuscule
  representations.  Then the \Dfn{local rule}
  \[
  \tau^A_{B,C}\colon A\otimes B\otimes C \rightarrow A\otimes
  C\otimes B
  \]
  is an isomorphism of crystals defined for highest weight words
  $a\,b\,c$ as follows: let $\kappa$ be the weight of $a$, let
  $\lambda$ be the weight of $a\,b$ and let $\nu$ be the weight of
  $a\,b\,c$.  Then
  \[
  \tau^A_{B,C}(a\,b\,c)= a\,\hat c\,\hat b,
  \]
  where 
  \[
  \hat c = \mu-\kappa\quad\text{and}\quad \hat
  b=\nu-\mu\quad\text{with}\quad\mu = \dom_W(\kappa+\nu-\lambda).
  \]
  We represent this by the following diagram:
  \begin{equation}\label{eq:local}
    \begin{tikzpicture}[scale=1,baseline=(current bounding box.center)]
      \node (ka) at (0,0) {$\kappa$};
      \node (la) at (0,1) {$\lambda$};
      \node (nu) at (1,1) {$\nu$};
      \node (mu) at (1,0) {$\mu$};
      \draw[->] (ka) -- (la) node [midway, left] {$b$};
      \draw[->] (mu) -- (nu) node [midway, right] {$\hat b$};
      \draw[->] (la) -- (nu) node [midway, above] {$c$};
      \draw[->] (ka) -- (mu) node [midway, below] {$\hat c$};
    \end{tikzpicture}
  \end{equation}
  Since any isomorphism between crystals is determined by specifying
  a bijection between the corresponding highest weight words, this
  definition can be extended to an isomorphism between
  $A\otimes B\otimes C$ and $A\otimes C\otimes B$ by applying the
  lowering operators.
\end{defn}
\begin{rmk}
  When $B=C$ is the crystal associated with the vector representation
  of $\SL(n)$, the local rule~\eqref{eq:local} is Fomin's for
  Schützenberger's jeu de taquin~\cite[A~1.2.7]{MR1676282}.  More
  explicitly, if $\lambda$ is the only partition of its size that
  contains $\kappa$ and is contained in $\nu$, then $\mu=\lambda$.
  Otherwise there is a unique such partition different from
  $\lambda$, and this is $\mu$.
\end{rmk}

\begin{rmk}\label{rmk:symmetry-of-local-rule}
  As in the classical case the local rule is symmetric in the sense
  that $\mu = \dom_W(\kappa+\nu-\lambda)$ if and only if
  $\lambda = \dom_W(\kappa+\nu-\mu)$, see
  \cite[Lem.~4.1.2]{MR1661263}.
\end{rmk}

\begin{example}
  Let $C$ be the crystal associated with the vector representation of
  $\SL(3)$, and let $A$ and $B$ be the crystal associated with its
  exterior square.  Let $a=e_1+e_2$, let $b=e_1+e_3$ and let $c=e_2$.
  Then $a\,b\,c$ is a highest weight word in $A\otimes B\otimes C$.
  We have $\kappa=(1,1,0)$, $\lambda=(2,1,1)$ and $\nu=(2,2,1)$.

  Thus, since $\dom_{\fS_n}$ sorts its argument into decreasing
  order,
  \[
  \mu = \dom_{\fS_n}(\kappa + \nu - \lambda) = \dom_{\fS_n}(1,2,0) =
  (2,1,0),%
  \quad\text{and}\quad \hat b=e_2+e_3,\quad \hat c=e_1.
  \]
\end{example}
\begin{example}
  Let $B$ and $C$ be the crystal associated with the vector
  representation of $\Sp(4)$ and let $A=\otimes^2 B$.  Let
  $a=e_1\,e_1$, let $b=e_2$ and let $c=-e_2$.  Then $a\,b\,c$ is a
  highest weight word in $A\otimes B\otimes C$.  We have
  $\kappa=(2,0)$, $\lambda=(2,1)$ and $\nu=(2,0)$.

  Thus, since $\dom_{\fH_n}$ takes absolute values and then sorts
  into decreasing order,
  \[
  \mu = \dom_{\fH_n}(\kappa + \nu - \lambda) = \dom_{\fH_n}(2,-1) =
  (2,1),%
  \quad\text{and}\quad \hat b=b,\quad \hat c=c.
  \]
\end{example}

\begin{thm}[\protect{\cite[Thm.~4.4]{MR2361854}}]\label{thm:local_commutor}
  Let $A$ and $B$ be crystals embedded into tensor products
  $A_1\otimes\dots\otimes A_k$ and $B_1\otimes\dots\otimes B_\ell$ of
  crystals of minuscule representations.  Let
  $w=w_1,\dots,w_{k+\ell}$ be a highest weight word in $A\otimes B$
  with corresponding tableau
  $\emptyset=\wgt^0,\wgt^1,\dots,\wgt^r=\wgt$.  Then
  $\sigma_{A,B}(w)$ can be computed as follows.  Create a
  $k\times \ell$ grid of squares as in \eqref{eq:local}, labelling
  the edges along the left border with $w_1,\dots,w_k$ and the edges
  along the top border with $w_{k+1},\dots,w_{k+\ell}$:
  \begin{equation}\label{eq:local_commutor}
    \tikzset{ short/.style={ shorten >=7pt, shorten <=7pt } }
    \begin{tikzpicture}[baseline=(current bounding box.center), scale=1.2]
      \node at (0,0) {$\wgt^0$};
      \node at (0,1) {$\wgt^1$};
      \node at (0,2) {$\wgt^{k-1}$};
      \node at (0,3) {$\wgt^k$};
      \node[right] at (4,3) {{\hbox to 5pt{\hss $\wgt^{k+\ell}$\hss}}};
      \draw[->,short] (0,0) -- (0,1) node [midway, left] {$w_1$};
      \draw[dotted,short] (0,1) -- (0,2) node {};
      \draw[->,short] (0,2) -- (0,3) node [midway, left] {$w_k$};
      \draw[->,short] (1,0) -- (1,1) node {};
      \draw[dotted,short] (1,1) -- (1,2) node {};
      \draw[->,short] (1,2) -- (1,3) node {};
      \draw[->,short] (3,0) -- (3,1) node {};
      \draw[dotted,short] (3,1) -- (3,2) node {};
      \draw[->,short] (3,2) -- (3,3) node {};
      \draw[->,short] (4,0) -- (4,1) node [midway, right] {$\hat w_{1+\ell}$};
      \draw[dotted,short] (4,1) -- (4,2) node {};
      \draw[->,short] (4,2) -- (4,3) node [midway, right] {$\hat w_{k+\ell}$};
      \draw[->,short] (0,3) -- (1,3) node [midway, above] {$w_{k+1}$};
      \draw[dotted,short] (1,3) -- (3,3) node {};
      \draw[->,short] (3,3) -- (4,3) node [midway, above] {$w_{k+\ell}$};
      \draw[->,short] (0,2) -- (1,2) node {};
      \draw[dotted,short] (1,2) -- (3,2) node {};
      \draw[->,short] (3,2) -- (4,2) node {};
      \draw[->,short] (0,1) -- (1,1) node {};
      \draw[dotted,short] (1,1) -- (3,1) node {};
      \draw[->,short] (3,1) -- (4,1) node {};
      \draw[->,short] (0,0) -- (1,0) node [midway, below] {$\hat w_1$};
      \draw[dotted,short] (1,0) -- (3,0) node {};
      \draw[->,short] (3,0) -- (4,0) node [midway, below] {$\hat w_\ell$};
    \end{tikzpicture}
  \end{equation}
  For each square whose left and top edges are already labelled use
  the local rule to compute the labels on the square's bottom and
  right edges.  The labels $\hat w_1\dots \hat w_{k+\ell}$ of the
  edges along the bottom and the right border of the grid then form
  $\sigma_{A,B}(w)$.
\end{thm}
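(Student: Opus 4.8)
The plan is to follow \cite[Thm.~4.4]{MR2361854}, proving the statement in two stages: a clean base case that pins down a single square with no spectators, and an induction that matches the full rectangular grid to the recursive Definition~\ref{dfn:cactus-group-action} of the commutor.

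\emph{The base case: one square, trivial left crystal.} First I would show that the local rule $\tau^A_{B,C}$ with $A$ the trivial crystal computes the commutor $\sigma_{B,C}$, for $B$ and $C$ crystals of minuscule representations. Both sides are crystal morphisms, so it is enough to compare them on a highest weight word $b\,c$ of $B\otimes C$; minusculeness means $b$ and $c$ are recovered from their weights, so the claim becomes the single weight identity that the first letter of $\sigma_{B,C}(b\,c)$ has weight $\dom_W\big(\wt(c)\big)$. I would check this from the explicit shape of Lusztig's involution on a minuscule crystal — it carries the vertex labelled by a weight $\xi$ to the vertex labelled by $w_0\xi$ — together with the tensor product rule for crystals; this is in essence van Leeuwen's verification of his local rule \cite[Rule~4.1.1, Lem.~4.1.2]{MR1661263} translated into crystal language, and the symmetry of Remark~\ref{rmk:symmetry-of-local-rule} drops out of the same computation. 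For type~$A$ this specialises to the classical fact that Fomin's local rule for a single jeu de taquin slide is correct.

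\emph{From the square to the grid.} I would then prove by induction — say on $k+\ell$, where $A=A_1\otimes\dots\otimes A_k$ and $B=B_1\otimes\dots\otimes B_\ell$ — that the $k\times\ell$ grid of \eqref{eq:local_commutor} computes $\sigma_{A,B}$. The subtle point, which makes this more than a formality, is that an individual square with a \emph{nontrivial} left crystal is \emph{not} a commutor: $\tau^A_{B,C}$ there merely reshuffles the intermediate weights in jeu-de-taquin fashion and differs from $1_A\otimes\sigma_{B,C}$. Instead the induction proceeds by peeling a row or a column off the grid and feeding it into the recursion of Definition~\ref{dfn:cactus-group-action} together with the identity $\cactus_{1,q}\cactus_{2,q}=\sigma_{C_1,C_2\otimes\dots\otimes C_q}$ recorded above, so that the grid decomposes exactly as the definition of $\sigma_{A,B}$ does, bottoming out in the base case. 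Along the way one checks that the partially filled grid always represents a genuine highest weight word in the reordered tensor product — this is where it matters that all maps in sight are crystal isomorphisms, hence send highest weight words to highest weight words, so that $\dom_W$ is only ever applied where the local rule is defined — and that the completed bottom and right borders spell out $\sigma_{A,B}(w)$. Since a crystal isomorphism is determined by its restriction to highest weight words, this finishes the proof.

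\emph{Main obstacle.} The representation-theoretic heart, the base case, is short once one has the concrete description of $\lusztig$ on a minuscule crystal, and it is the only place where minusculeness of $B$ and $C$ is used. The real work is the inductive step: untangling the local-rule combinatorics from the recursive definition of the commutor so that the grid can be taken apart consistently. One must be careful here precisely because the commutor is not a braiding, so the hexagon-type manipulations available in braided categories — and the naive idea that the grid is merely a composite of elementary commutors — are not available.
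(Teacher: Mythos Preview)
The paper does not give its own proof of this statement: Theorem~\ref{thm:local_commutor} is quoted verbatim from Lenart~\cite[Thm.~4.4]{MR2361854} and stated without proof, so there is nothing to compare your proposal against in the paper itself. Your sketch is a faithful reconstruction of the argument you would find in the cited source --- the van~Leeuwen computation of the commutor on a single minuscule square, followed by an induction that unwinds the grid via the recursive definition of $\sigma_{A,B}$ --- and the caveats you flag (that $\tau^A_{B,C}$ with nontrivial $A$ is not itself a commutor, and that coboundary categories lack the hexagon axiom) are exactly the ones that need watching.
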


\begin{example}
  Let $C_1 = C_2 = C_4$ be the crystal associated with the exterior
  square of $\SL(3)$ and let $C_3$ the crystal corresponding to its
  vector representation.  Then
  $w = e_1\text{+}e_2 \; e_1\text{+}e_3\; e_2\; e_1\text{+}e_3$ is
  the highest weight word corresponding to the semistandard tableau
  \[
    \young(112,234,4).
  \]
  The promotion $\pr w = \sigma_{C_1,C_2\otimes C_3\otimes C_4}(w)$
  can now be computed using Theorem~\ref{thm:local_commutor}.  We put
  the zero weight in the bottom-left corner.  Then, beginning with
  $\wgt^1=w_1$, we write down the sequence of cumulative weights
  $\wgt^1,\dots,\wgt^4$, with $\wgt^q = \sum_{i=1}^q w_i$ in the top
  row.  Finally, we successively apply the local
  rule~\eqref{eq:local} and fill in the weights in the bottom row.
  \begin{equation*}
    \tikzset{ short/.style={ shorten >=10pt, shorten <=10pt } }
    \begin{tikzpicture}[scale=1.5]
      \node at (0,0) {$000$};
      \node at (0,1) {$110$};
      \node at (1,0) {$110$};
      \node at (1,1) {$211$};
      \node at (2,0) {$210$};
      \node at (2,1) {$221$};
      \node at (3,0) {$311$};
      \node at (3,1) {$322$};
      \draw[->,short] (0,0) -- (0,1) node [midway, left] {$e_1\text{+}e_2$};
      \draw[->,short] (1,0) -- (1,1) node {};
      \draw[->,short] (2,0) -- (2,1) node {};
      \draw[->,short] (3,0) -- (3,1) node {};
      \draw[->,short] (0,1) -- (1,1) node [midway, above] {$e_1\text{+}e_3$};
      \draw[->,short] (1,1) -- (2,1) node [midway, above] {$e_2$};
      \draw[->,short] (2,1) -- (3,1) node [midway, above] {$e_1\text{+}e_3$};
      \draw[->,short] (0,0) -- (1,0) node {};
      \draw[->,short] (1,0) -- (2,0) node {};
      \draw[->,short] (2,0) -- (3,0) node {};
    \end{tikzpicture}
  \end{equation*}
  From now on we omit edges and their labels, because the labels are
  determined by the weights.

  By Theorem~\ref{thm:local_commutor} the promotion of $w$ is given
  by the sequence of weights in the bottom row, together with the
  weight in the top-right corner.  The corresponding tableau is
  \[
    \young(113,244,3).
  \]

  As mentioned in the introduction, this definition of promotion
  coincides with the classical definition of promotion in terms of
  Bender-Knuth moves on tableaux when the crystals correspond to
  exterior powers of the vector representation of $\SL(n)$.
\end{example}

As an aside we obtain a formulation of the commutor, and therefore
also of promotion of highest weight words in crystals of minuscule
representations, analogous to the definition in terms of slides in
tableaux, as follows.
\begin{cor}\label{cor:sliding}
  Let $A$ and $B$ be crystals embedded into tensor products of
  crystals of minuscule representations. Let $a\in A$ and $b\in B$
  such that $a\,b$ is a highest weight word in $A\otimes B$, and let
  $\hat b\,\hat a= \sigma_{A,B}(a\,b)$ with $\hat b\in B$ and
  $\hat a\in A$.

  Then
  \begin{itemize}
  \item[] $\hat b$ is the highest weight vertex in the same component
    of $B$ as $b$, and
  \item[] $\hat a$ is a vertex of $A$ such that the weight of
    $\hat b\,\hat a$ equals the weight of $a\,b$.
  \end{itemize}

  In particular, if $A$ is a crystal of a minuscule representation,
  $\hat a$ is determined uniquely by its weight.
\end{cor}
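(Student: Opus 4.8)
The plan is to read off all three assertions from the description of the commutor in terms of Lusztig's involution, $\sigma_{A,B}(a\,b)=\lusztig\bigl(\lusztig(b)\,\lusztig(a)\bigr)$, together with one elementary property of crystal tensor products.

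First I would record that $\sigma_{A,B}$ is an isomorphism of crystals $A\otimes B\to B\otimes A$: it intertwines the crystal operators by definition, and it is a weight-preserving bijection, being assembled from Lusztig's involution (applied on each factor and then on $B\otimes A$) and the swap of tensor factors. Hence $\sigma_{A,B}$ maps highest weight words to highest weight words, so $\hat b\,\hat a=\sigma_{A,B}(a\,b)$ is a highest weight word in $B\otimes A$ with $\wt(\hat b)+\wt(\hat a)=\wt(a)+\wt(b)$; this already gives the second assertion. Now I would invoke the basic fact that the first letter of a highest weight word in a tensor product of crystals is itself a highest weight vertex of the first factor; applied to $\hat b\,\hat a\in B\otimes A$ it shows that $\hat b$ is a highest weight vertex of $B$.

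It remains to identify the connected component of $B$ in which $\hat b$ lies, and here I would use the Lusztig description directly. Since $\lusztig$ preserves each connected component, $\lusztig(b)$ lies in the component $B'$ of $B$ containing $b$ and $\lusztig(a)$ lies in the component $A'$ of $A$ containing $a$, so $\lusztig(b)\,\lusztig(a)\in B'\otimes A'$. As $B'\otimes A'$ is a union of connected components of $B\otimes A$ (the crystal operators act on one tensor factor at a time and preserve $B'$ and $A'$), applying $\lusztig$ once more keeps $\hat b\,\hat a$ inside $B'\otimes A'$; in particular $\hat b\in B'$. Being a highest weight vertex of $B$, it is therefore the highest weight vertex of $B'$, which is the first assertion. (The same argument puts $\hat a$ in $A'$, slightly more than is claimed.) Finally, if $A$ is the crystal of a minuscule representation then, as recalled in Section~\ref{sec:background}, its vertices are identified with the weights of the representation and so carry pairwise distinct weights; combined with $\wt(\hat a)=\wt(a)+\wt(b)-\wt(\hat b)$ from the previous paragraph, this determines $\hat a$ uniquely.

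I do not anticipate a genuine obstacle. The one point deserving care is the fact invoked above, that the first letter of a highest weight word in a tensor product of crystals is a highest weight vertex of the first factor; it should be checked against the tensor product convention in force here, namely the one under which the highest weight words are exactly those all of whose partial sums of letter weights are dominant (as used throughout Section~\ref{sec:background}), under which it is immediate from the signature rule. Alternatively, one could extract the corollary from the local-rule computation of $\sigma_{A,B}$ in Theorem~\ref{thm:local_commutor}; the formulation via Lusztig's involution simply makes the statement most transparent.
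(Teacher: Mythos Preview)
Your argument is correct and follows essentially the same route as the paper: the commutor is a crystal isomorphism, hence sends highest weight words to highest weight words (giving the weight equality and that $\hat b$ is highest weight), and the component of $B$ is preserved. The only cosmetic difference is that the paper obtains component preservation by invoking the naturality property $(C1)$ of the commutor from Lenart~\cite{MR2361854} to replace $B$ by its component $B_\lambda$, whereas you unpack the Lusztig formula $\sigma_{A,B}(a\,b)=\lusztig(\lusztig(b)\,\lusztig(a))$ directly; your version is slightly more self-contained.
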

\begin{proof}
  Let $B_\lambda$ be the component of $B$ containing $b$.  Because of
  the naturality of the commutor in $B$ (see property $(C1)$ in
  \cite{MR2361854}), $\sigma_{A,B}(a\,b)$ equals
  $\sigma_{A,B_\lambda}(a\,b)$.

  Since $a\,b$ is a highest weight word and the commutor is an
  isomorphism of crystals,
  $\sigma_{A,B_\lambda}(a\,b)=\hat b\,\hat a$ is also a highest
  weight word.  It follows that $\hat b$ is of highest weight, and
  therefore equals the highest weight vertex of $B_\lambda$.
\end{proof}

\subsection{Promotion and evacuation of $\GL(n)$-alternating
  tableaux}

Let us now make promotion and evacuation of $\GL(n)$-alternating
tableaux explicit.  Recall from example~\ref{eg:alternating} that we
regard the adjoint representation as the tensor product
$V\otimes V^\ast$, where $V$ is the vector representation of $\GL(n)$
and $V^\ast$ is its dual.  Both of these are minuscule, so we can
apply Theorem~\ref{thm:local_commutor}.

Therefore, the rectangular grid to compute
$\pr w = \sigma_{C_1,C_2\otimes\dots\otimes C_r}(w)$ has three rows,
the bottom row begins with the zero weight, the middle row with
$\mu^1$, which is always $e_1$, and the top row contains the
remaining cumulative weights $\mu^2,\dots,\mu^{2r}$.

However, because we will apply promotion repeatedly, it will be
convenient to slightly enlarge this grid, and prepend the two weights
$\mu^0$ and $\mu^1$ to the first row.  After having successively
applied the local rule~\eqref{eq:local} and thus computed the
remaining weights in the middle and the bottom row, we append the
final element of the second row and the weight of the original word
to the third line.  We call the resulting diagram the \Dfn{promotion
  diagram} of an alternating tableau:
\begin{equation}\label{eq:pr-adjoint-schema}
\def\arraystretch{2.5}
\newcolumntype{C}{>{$}p{32pt}<{$}}
\newcolumntype{D}{>{$}p{53pt}<{$}}
\begin{array}{CCCCCDD}
    \wgt^0{\scriptstyle=\emptyset}&\wgt^1{=\scriptstyle1}&\wgt^2&\dotfill&\wgt^{2r}\\
    &&\hm^1{\scriptstyle=\wgt^1}&\dotfill&\hm^{2r-1}\\
    &&\Pm^0{=\scriptstyle\wgt^0}&\dotfill&\Pm^{2r-2}&\Pm^{2r-1}{=\scriptstyle\hm^{2r-1}}&\Pm^{2r}{=\scriptstyle\wgt^{2r}}
\end{array}
\end{equation}

To illustrate, let us compute the promotion of the $\GL(3)$ highest
weight word
$w=(e_1,\text{-}e_3)\,(e_1,\text{-}e_2)\,(e_2,\text{-}e_2)\,(e_2,\text{-}e_1)\,(e_3,\text{-}e_1)$.
The first row is the alternating tableau corresponding to $w$.  For
better readability we write $\bar 1$ in place of $-1$.
\begin{equation}\label{eq:pr-adjoint-example}
\def\arraystretch{1.5}
\begin{array}{ccccccccccccc}
  000&100&10\bar1&20\bar1&2\bar1\bar1&\tikzmark{start}20\bar1&2\bar1\bar1&20\bar1&10\bar1&100&000\\
     &   &100    &200    &20\bar1    &21\bar1            &20\bar1    &21\bar1&11\bar1&110&100\\
     &   &000    &100    &10\bar1    &11\bar1                &10\bar1\tikzmark{end}&11\bar1&10\bar1&100&10\bar1&100&000
\end{array}
\begin{tikzpicture}[remember picture,overlay]
  \node[draw,line width=0pt,rectangle,minimum size=1.8cm,fit={(pic cs:start) (pic cs:end)},yshift=3pt] {};
\end{tikzpicture}
\end{equation}
Thus, the promotion of $w$ is
$\pr
w=(e_1,\text{-}e_3)\,(e_2,\text{-}e_2)\,(e_2,\text{-}e_2)\,(e_3,\text{-}e_3)\,(e_3,\text{-}e_1)$.

The six vectors in the rectangle in
diagram~\eqref{eq:pr-adjoint-example} demonstrate that the naive
embedding of $\GL(n)$-alternating tableaux into the set of
$\GL(n+1)$-alternating tableaux is not compatible with promotion, as
already mentioned in Section~\ref{sec:adjoint-results}: padding the
vectors of the original word with zeros, and applying the local rule,
we obtain the rectangle
\[
\def\arraystretch{1.5}
\begin{array}{cc}
  200\bar1&20\bar1\bar1\\
  210\bar1&21\bar1\bar1\\
  110\bar1&11\bar1\bar1,
\end{array}
\]
with bottom-right vector $11\bar1\bar1$, rather than $100\bar1$ as
one might expect.

\begin{figure}[h]
\[
\def\arraystretch{1.75}
\setlength{\arraycolsep}{3pt}
\begin{array}{r*{14}{c}}
\alt=0 0 0&1 0 0&1 0\bar1&2 0\bar1&2 0\bar2&2 0\bar1&2\bar1\bar1&3\bar1\bar1&2\bar1\bar1\tikzmark{min1}&2 0\bar1&2 0\bar2&3 0\bar2&3 0\bar3&3 1\bar3&2 1\bar3\\
        &       &1 0 0&2 0 0&2 0\bar1&2 0 0&2 0\bar1&3 0\bar1&2 0\bar1&2 1\bar1&2 1\bar2&3 1\bar2&3 1\bar3&3 2\bar3&2 2\bar3\\
        &       &0 0 0&1 0 0&1 0\bar1\tikzmark{min2}&1 0 0&1 0\bar1&2 0\bar1&1 0\bar1&1 1\bar1&1 1\bar2&2 1\bar2&2 1\bar3&2 2\bar3&2 1\bar3\\
        &       &       &       &1 0 0&1 1 0&1 1\bar1&2 1\bar1&1 1\bar1&2 1\bar1&2 1\bar2&3 1\bar2&3 1\bar3&3 2\bar3&3 1\bar3\\
        &       &       &       &0 0 0&1 0 0&1 0\bar1&2 0\bar1&1 0\bar1&2 0\bar1&2 0\bar2&3 0\bar2&3 0\bar3&3 1\bar3&3 0\bar3\\
        &       &       &       &       &       &1 0 0&2 0 0&1 0 0&2 0 0&2 0\bar1&3 0\bar1&3 0\bar2&3 1\bar2&3 0\bar2\\
        &       &       &       &       &       &0 0 0&1 0 0\tikzmark{fixd}&0 0 0&1 0 0&1 0\bar1&2 0\bar1&2 0\bar2&2 1\bar2&2 0\bar2\\
        &       &       &       &       &       &       &       &1 0 0&2 0 0&2 0\bar1&3 0\bar1&3 0\bar2&3 1\bar2&3 0\bar2\\
        &       &       &       &       &       &       &       &0 0 0&1 0 0&1 0\bar1&2 0\bar1&2 0\bar2&2 1\bar2&2 0\bar2\\
        &       &       &       &       &       &       &       &       &       &1 0 0&2 0 0&2 0\bar1&2 1\bar1&2 0\bar1\\
        &       &       &       &       &       &       &       &       &       &0 0 0&1 0 0&1 0\bar1&1 1\bar1&1 0\bar1\\
        &       &       &       &       &       &       &       &       &       &       &       &1 0 0&1 1 0\tikzmark{plus}&1 0 0\\
        &       &       &       &       &       &       &       &       &       &       &       &0 0 0&1 0 0&1 0\bar1\\
        &       &       &       &       &       &       &       &       &       &       &       &       &       &1 0 0\\
        &       &       &       &       &       &       &       &       &       &       &       &       &       &0 0 0\\
&&&&&&&&&&&&&&\rotatebox{90}{$\ev\alt=$}
\end{array}
\]
\begin{tikzpicture}[remember picture,overlay]
  \node [xshift=3pt,yshift=-7pt]  at (pic cs:min1) {\negcross};
  \node [xshift=3pt,yshift=-7pt]  at (pic cs:min2) {\negcross};
  \node [xshift=3pt,yshift=-7pt]  at (pic cs:plus) {\poscross};
  \node [xshift=3pt,yshift=-7pt]  at (pic cs:fixd) {\fixcross};
\end{tikzpicture}
\caption{The evacuation of an alternating tableau.}
\label{fig:ev-adjoint-example}
\end{figure}
To obtain the evacuation of an alternating tableau we use the second
identity of Lemma~\ref{lem:extend-s_1q}.  We start by computing the
promotion of the initial alternating tableau as above, except that we
do not append anything to the third row.  We then repeat this
process a total of $r$ times, creating a (roughly) triangular array
of weights, which we call the \Dfn{evacuation diagram} of an alternating tableau.  The sequence of cumulative weights of the evacuation can
then be read off the vertical row on the right hand side, from
bottom to top.  An example can be found in Figure~\ref{fig:ev-adjoint-example}.  The symbols \poscross, \negcross\ and \fixcross\ occurring in the figure should be ignored for the moment.

Finally, we would like to point out that for alternating tableaux of
empty shape there is a second way to compute the promotion,
exploiting the fact that the next-to-last weight is forced to be
$10\dots0$.  Let $w=w_1\dots w_r$ be the highest weight word
corresponding to the alternating tableau.  We consider $w$ as an
element of $A\otimes A^\ast\otimes B_\lambda$, where $A$ is the
crystal corresponding to $V$, $A^\ast$ is the crystal corresponding
to $V^\ast$ and, similar to what was done in the proof of
Corollary~\ref{cor:sliding}, $B_\lambda$ is the component of
$\bigotimes^{r-1}(A\otimes A^\ast)$ containing $w_3\dots w_r$.  Then
we first compute $\hat w=\sigma_{A, A^\ast\otimes B_\lambda}(w)$,
followed by computing
$\hat{\hat w}=\sigma_{A^\ast,B_\lambda\otimes A}(\hat w)$:
\begin{equation}\label{eq:pr-stephan-adjoint-example}
\def\arraystretch{1.5}
\begin{array}{ccccccccccccc}
  000&100&10\bar1&20\bar1&2\bar1\bar1&20\bar1&2\bar1\bar1&20\bar1&10\bar1&100&000\\
  &000&00\bar1&10\bar1&1\bar1\bar1&10\bar1&1\bar1\bar1&10\bar1&00\bar1&000&00\bar1&000\\
  &   &000    &100    &10\bar1    &11\bar1                &10\bar1&11\bar1&10\bar1&100&10\bar1&100&000
\end{array}
\end{equation}
Because the initial segment of $\hat{\hat w}$ is an element of $B_\lambda$ and is of highest weight, it must coincide with the initial segment of the promotion of $w$.

This variant of the local rules for promotion was recently
rediscovered, in slightly different form, by
Patrias~\cite{MR3861768}.  Note, however, that for an alternating
tableau of non-empty shape, this procedure yields a tableau which, in
general, is different from the result of promotion.
\section{Growth diagram bijections}
\label{sec:growth-diagrams}

In this section we recall Sundaram's bijection (using Roby's
description~\cite{MR2716353} based on Fomin's growth
diagrams~\cite{MR1314558}) between oscillating tableaux and
matchings.  We also present a new bijection, in the same spirit,
between alternating tableaux and partial permutations.  In both
cases, the action of the cactus group on highest weight words becomes
particularly transparent when using Fomin's growth diagrams and local
rules for the Robinson-Schensted correspondence.

\begin{figure}[h]
  \def\arraystretch{1.5}
  \begin{tabular}{rccc}
    &
      \begin{tikzpicture}[baseline={([yshift=-.5ex]current bounding box.center)},scale=2]
        \node (ka) at (0,1) {$\kappa$};
        \node (la) at (0,0) {$\lambda$};
        \node (nu) at (1,0) {$\nu$};
        \node (mu) at (1,1) {$\mu$};
        \draw[<-] (ka) -- (la);
        \draw[<-] (mu) -- (nu);
        \draw[->] (la) -- (nu);
        \draw[->] (ka) -- (mu);
        \node at (0.5,0.5)[rectangle,minimum size=1.5cm,draw] {};
      \end{tikzpicture}
    &or 
    &
      \begin{tikzpicture}[baseline={([yshift=-.5ex]current bounding box.center)},scale=2]
        \node (ka) at (0,1) {$\lambda$};
        \node (la) at (0,0) {$\lambda$};
        \node (nu) at (1,0) {$\lambda$};
        \node (mu) at (1,1) {$\mu$};
        \draw[<-] (ka) -- (la);
        \draw[<-] (mu) -- (nu);
        \draw[->] (la) -- (nu);
        \draw[->] (ka) -- (mu);
        \node at (0.5,0.5)[rectangle,minimum size=1.5cm,draw] {\huge$\times$};
      \end{tikzpicture}\\
    forward rules: %
    & $\mu'=\domS(\kappa'+\nu'-\lambda')$ & %
    & $\mu = \lambda + e_1$\\
    backward rules: %
    & $\lambda'=\domS(\kappa'+\nu'-\mu')$ & %
    & $\lambda = \mu - e_1$ %
  \end{tabular}
  \caption{Cells of a growth diagram and corresponding local rules.}
  \label{fig:growth-cell}
\end{figure}
For our purposes, a growth diagram is a finite collection of cells,
arranged in the form of a Ferrers diagram using the French
convention, as for example in Figures~\ref{fig:s1q-partial-matchings}
and ~\ref{fig:s1q-partial-permutations}.  Let us first describe the
classical setup, which we use to describe Sundaram's correspondence.

In this case, each cell is either empty or contains a cross.
Moreover, we require that in every row and every column of the growth
diagram there is at most one cell which contains a cross.  Every
corner of a cell is labelled with a partition such that the local
rules in Figure~\ref{fig:growth-cell} are satisfied, where
$\lambda^\prime$ denotes the partition conjugate to~$\lambda$
and~$e_1$ is the first unit vector.  Moreover, we require that two
adjacent partitions (as for example~$\lambda$ and~$\kappa$ in
Figure~\ref{fig:growth-cell}) either coincide or the one at the head
of the arrow is obtained from the other by adding a unit vector.

Furthermore, we require that the partitions labelling the corners of
a cell satisfy the forward and backward rules of
Figure~\ref{fig:growth-cell}.  In fact, the two \Dfn{forward rules}
determine~$\mu$ given the other three partitions and the content of
the cell.  The two \Dfn{backward rules} determine the content of the
cell and the partition~$\lambda$ in the bottom-left given the other
three partition.

Thus, the information in a growth diagram is redundant.  In
particular, given the partitions labelling the corners along the
bottom-left border and the contents of the cells, one can recover the
remaining partitions.  Conversely, given the partitions labelling the
corners along the top-right border of a diagram, one can recover the
remaining partitions and the contents of the cells.

The presentation of the local rules in Figure~\ref{fig:growth-cell}
is slightly non-standard.  It has the benefit that the local rule for
empty cells is very similar to the special case of
Definition~\ref{def:local-rules} corresponding to Cartan type~$A_n$,
with two important differences.  The first difference is that all
partitions are transposed, the second, that the orientation of the
vertical arrows is reversed.

\subsection{Roby's description of Sundaram's correspondence}
\label{sec:Roby}
\begin{figure}
  \begin{center}
\begin{tikzpicture}[scale=0.75]\tiny
\draw[dotted] (1,1) -- (5,5);
\foreach \y in {1,...,10}
   \draw (1,11-\y)--(\y,11-\y);
\foreach \x in {1,2,...,10}
   \draw (\x,11-\x)--(\x,1);
\foreach \x in {1,2,...,9}
   \draw (\x.5,11-10.3) node{\footnotesize$\x$};
\foreach \x in {1,2,...,9}
   \draw (0.3,11-\x.4) node{\footnotesize$\x$};
\draw (1.5,11-4.5) node{\huge$\times$};
\draw (2.5,11-9.5) node{\huge$\times$};
\draw (3.5,11-6.5) node{\huge$\times$};
\draw (.8,11-.8) node{\footnotesize$\emptyset$};
\foreach \x in {1,2,...,8}
   \draw (.8,11-\x.8) node{$\emptyset$};
\draw (0.8,11-9.8) node{\footnotesize$\emptyset$};
\draw (1.8,11-1.8) node{\footnotesize$1$};
\draw (1.8,11-2.8) node{$1$};
\draw (1.8,11-3.8) node{$1$};
\foreach \x in {4,5,...,8}
   \draw (1.8,11-\x.8) node{$\emptyset$};
\draw (1.8,11-9.8) node{\footnotesize$\emptyset$};
\draw (2.8,11-2.8) node{\footnotesize$11$};
\draw (2.8,11-3.8) node{$11$};
\foreach \x in {4,5,...,8}
   \draw (2.8,11-\x.8) node{$1$};
\draw (2.8,11-9.8) node{\footnotesize$\emptyset$};
\draw (3.8,11-3.8) node{\footnotesize$21$};
\draw (3.8,11-4.8) node{$2$};
\draw (3.8,11-5.8) node{$2$};
\draw (3.8,11-6.8) node{$1$};
\draw (3.8,11-7.8) node{$1$};
\draw (3.8,11-8.8) node{$1$};
\draw (3.8,11-9.8) node{\footnotesize$\emptyset$};
\draw (4.8,11-4.8) node{\footnotesize$2$};
\draw (4.8,11-5.8) node{$2$};
\draw (4.8,11-6.8) node{$1$};
\draw (4.8,11-7.8) node{$1$};
\draw (4.8,11-8.8) node{$1$};
\draw (4.8,11-9.8) node{\footnotesize$\emptyset$};
\draw (5.8,11-5.8) node{\footnotesize$21$};
\draw (5.8,11-6.8) node{$11$};
\draw (5.8,11-7.8) node{$11$};
\draw (5.8,11-8.8) node{$11$};
\draw (5.8,11-9.8) node{\footnotesize$1$};
\draw (6.8,11-6.8) node{\footnotesize$11$};
\draw (6.8,11-7.8) node{$11$};
\draw (6.8,11-8.8) node{$11$};
\draw (6.8,11-9.8) node{\footnotesize$1$};
\draw (7.8,11-7.8) node{\footnotesize$21$};
\draw (7.8,11-8.8) node{$21$};
\draw (7.8,11-9.8) node{\footnotesize$2$};
\draw (8.8,11-8.8) node{\footnotesize$211$};
\draw (8.8,11-9.8) node{\footnotesize$21$};
\draw (9.8,11-9.8) node{\footnotesize$21$};
\end{tikzpicture}
\begin{tikzpicture}[scale=0.75]\tiny
\draw[dotted] (1,1) -- (5,5);
\foreach \y in {1,...,10}
   \draw (1,11-\y)--(\y,11-\y);
\foreach \x in {1,2,...,10}
   \draw (\x,11-\x)--(\x,1);
\foreach \x in {1,2,...,9}
   \draw (\x.5,11-10.3) node{\footnotesize$\x$};
\foreach \x in {1,2,...,9}
   \draw (0.3,11-\x.4) node{\footnotesize$\x$};
\draw (1.5,11-8.5) node{\huge$\times$};
\draw (4.5,11-7.5) node{\huge$\times$};
\draw (6.5,11-9.5) node{\huge$\times$};
\draw (.8,11-.8) node{\footnotesize$\emptyset$};
\foreach \x in {1,2,...,8}
   \draw (.8,11-\x.8) node{$\emptyset$};
\draw (0.8,11-9.8) node{\footnotesize$\emptyset$};
\draw (1.8,11-1.8) node{\footnotesize$1$};
\foreach \x in {2,3,...,7}
   \draw (1.8,11-\x.8) node{$1$};
\draw (1.8,11-8.8) node{$\emptyset$};
\draw (1.8,11-9.8) node{\footnotesize$\emptyset$};
\draw (2.8,11-2.8) node{\footnotesize$11$};
\foreach \x in {3,4,...,7}
   \draw (2.8,11-\x.8) node{$11$};
\draw (2.8,11-8.8) node{$1$};
\draw (2.8,11-9.8) node{\footnotesize$1$};
\draw (3.8,11-3.8) node{\footnotesize$111$};
\foreach \x in {4,5,6,7}
   \draw (3.8,11-\x.8) node{$111$};
\draw (3.8,11-8.8) node{$11$};
\draw (3.8,11-9.8) node{\footnotesize$11$};
\draw (4.8,11-4.8) node{\footnotesize$211$};
\foreach \x in {5,6}
   \draw (4.8,11-\x.8) node{$211$};
\draw (4.8,11-7.8) node{$111$};
\draw (4.8,11-8.8) node{$11$};
\draw (4.8,11-9.8) node{\footnotesize$11$};
\draw (5.8,11-5.8) node{\footnotesize$221$};
\draw (5.8,11-6.8) node{$221$};
\draw (5.8,11-7.8) node{$211$};
\draw (5.8,11-8.8) node{$21$};
\draw (5.8,11-9.8) node{\footnotesize$21$};
\draw (6.8,11-6.8) node{\footnotesize$321$};
\draw (6.8,11-7.8) node{$311$};
\draw (6.8,11-8.8) node{$31$};
\draw (6.8,11-9.8) node{\footnotesize$21$};
\draw (7.8,11-7.8) node{\footnotesize$311$};
\draw (7.8,11-8.8) node{$31$};
\draw (7.8,11-9.8) node{\footnotesize$21$};
\draw (8.8,11-8.8) node{\footnotesize$31$};
\draw (8.8,11-9.8) node{\footnotesize$21$};
\draw (9.8,11-9.8) node{\footnotesize$21$};
\end{tikzpicture}
  \end{center}
  \caption{A pair of growth diagrams $\G(\osc)$ and
    $\G(\cactus_{1,9}\osc)$, with $\osc=(\emptyset,1,11,21,2,21,11,21,211,21)$, illustrating
    Theorem~\ref{thm:s1q-partial-matchings}.  The dotted line
    indicates the axis of reflection for the matchings $\M(\osc)$ and
    $\M(\cactus_{1,9}\osc)$.}
  \label{fig:s1q-partial-matchings}
\end{figure}

In this section we recall the bijection between oscillating tableaux
of length $r$ and shape $\mu$ and pairs consisting of a partial
matching of $\{1,\dots,r\}$ and a partial standard Young tableau of
shape $\mu$ whose entries are the unmatched elements.

\begin{defn}
  Let $\osc=(\wgt_0,\wgt_1,\ldots,\wgt_r)$ be an oscillating
  tableau.  The associated (triangular) growth diagram $\G(\osc)$
  consists of $r$ left-justified rows, with $i-1$ cells in row $i$
  for $i\in\{1,\dots,r\}$, where row $1$ is the top row.  Label
  the cells according to the following specification:
  \begin{itemize}\setlength{\itemsep}{0pt}
  \item[R1] Label the north east corners of the cells on the main diagonal from
    the top-left to the bottom-right with the partitions in $\osc$.
  \item[R2] Label the corners of the first subdiagonal with the
    smaller of the two partitions labelling the two adjacent corners
    on the diagonal.
  \item[R3] Use the backward rules to determine which cells contain a
    cross.
  \end{itemize}

  Let $\M(\osc)$ be the matching containing a pair $\{i, j\}$ for
  every cross in column~$i$ and row~$j$ of the $\G(\osc)$.
  Furthermore, let $\M_T(\osc)$ be the partial standard Young tableau
  corresponding to the sequence of partitions along the bottom border
  of $\G(\osc)$.
\end{defn}

\begin{thm}[\protect{Sundaram~\cite[Sec.~8]{MR2941115},
    Roby~\cite[Prop.~4.3.1]{MR2716353}}]
  The map $\osc\mapsto\big(\M(\osc), \M_T(\osc)\big)$ is a bijection
  between oscillating tableaux of length $r$ and shape $\mu$, and
  pairs consisting of a perfect matching of a subset of
  $\{1,\dots,r\}$ and a partial standard Young tableau of shape
  $\mu$, whose entries form the complementary subset.

  Moreover, the map $\osc\mapsto\M(\osc)$ is a bijection between
  $n$-symplectic oscillating tableaux of length $r$ and empty shape
  and $(n+1)$-noncrossing perfect matchings of~$\{1,\dots,r\}$.
\end{thm}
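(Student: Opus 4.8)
The plan is to treat the statement as an instance of the standard theory of Fomin-type growth diagrams, proving the two assertions in turn. For the first one (the bijection for arbitrary shape) the crux will be that the forward and backward local rules of Figure~\ref{fig:growth-cell} are mutually inverse bijections between the quadruple of partitions labelling the corners of a cell and its content. For an empty cell this is precisely the symmetry property of Remark~\ref{rmk:symmetry-of-local-rule}, applied to the symmetric group $\fS$ and to the transposed partitions, i.e.\ $\mu'=\domS(\kappa'+\nu'-\lambda')$ if and only if $\lambda'=\domS(\kappa'+\nu'-\mu')$; for a cell carrying a cross the rule is visibly self-inverse, and the two cases are separated by testing whether $\kappa=\nu$ and $\mu=\kappa+e_1$. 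Granting this, I would show that $\G(\osc)$ is well defined by starting from the labels that rules R1 and R2 place on the main diagonal and the first subdiagonal and then filling in the remaining corners and cell contents by repeated application of the backward rule, proceeding from the diagonal towards the bottom-left border. An induction then shows at each step that the three already-known corners satisfy the admissibility hypotheses of the local rule — the base case being exactly that consecutive entries of $\osc$ differ by one cell and that R2 records the smaller, hence contained, of two adjacent diagonal partitions — and that the new corner again meets the adjacency condition with its neighbours. The requirement of at most one cross per row and per column is then automatic: the cross produced in column~$i$ and row~$j$ records a pair $\{i,j\}$ with $i<j$, and each element of $\{1,\dots,r\}$ is the larger member of at most one pair and the smaller member of at most one pair.

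Next I would write down the inverse map explicitly. Given a perfect matching $M$ of a subset of $\{1,\dots,r\}$ together with a partial standard Young tableau $T$ of shape $\wgt$ whose entries form the complement, place a cross in column~$i$, row~$j$ for each pair $\{i,j\}\in M$ with $i<j$, label the bottom-left border by the chain of partitions recording $T$, and apply the forward rule from the bottom-left border towards the main diagonal; the sequence read off the diagonal is then an oscillating tableau of length~$r$ and shape~$\wgt$. Well-definedness is checked exactly as before with the forward rule in place of the backward one, and the two constructions undo one another cell by cell by the first step, so they are mutually inverse. This recovers Sundaram's theorem~\cite{MR2941115} in Roby's formulation~\cite{MR2716353}.

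For the second assertion I would first note that when $\osc$ has empty shape, $\M_T(\osc)$ is the empty tableau, so no element of $\{1,\dots,r\}$ is left unmatched and $\M(\osc)$ is a perfect matching of all of $\{1,\dots,r\}$; conversely every such matching arises. Hence the first assertion specialises to a bijection between oscillating tableaux of length~$r$ and empty shape and perfect matchings of $\{1,\dots,r\}$, and it remains only to match the two side conditions. On one side, $\osc$ is $n$-symplectic exactly when every partition on the main diagonal of $\G(\osc)$ has at most~$n$ parts; since the number of parts is weakly monotone along the oriented edges of the diagram and the main diagonal carries the largest partitions, this is equivalent to the assertion that every partition occurring anywhere in $\G(\osc)$ has at most~$n$ parts. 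On the other side, a family of $k$ pairwise crossing arcs of $\M(\osc)$ is precisely a family of $k$ crosses with strictly increasing columns and strictly increasing rows, and the Greene--Fomin-type invariant argument for growth diagrams — which, the empty-cell rule being the transposed one, here reads off the number of parts rather than the largest part — identifies the maximal size of such a family with the largest number of parts appearing among the partitions of $\G(\osc)$. Combining these equivalences, $\osc$ is $n$-symplectic if and only if $\M(\osc)$ has no $n+1$ pairwise crossing arcs, that is, if and only if $\M(\osc)$ is $(n+1)$-noncrossing.

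I expect the genuine obstacle to be this last identity: establishing that the maximal number of pairwise crossing arcs of $\M(\osc)$ equals the maximal number of parts occurring in $\G(\osc)$, a Greene-type theorem which requires tracking a maximal increasing staircase of crosses through the (transposed) local rules and checking that the monotonicity of the part-count is genuinely governed by such staircases. Everything else — the local reversibility of the rules, the inductive well-definedness of the filling, and the bookkeeping specific to the triangular diagram — is routine though somewhat lengthy, being the standard content underlying Fomin's growth-diagram machinery~\cite{MR1314558,MR1676282}.
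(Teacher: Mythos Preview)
The paper does not supply a proof of this theorem: it is quoted as a known result, with attribution to Sundaram and to Roby, and the surrounding text only illustrates the construction on an example. So there is no ``paper's own proof'' to compare your proposal against.

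That said, your outline is the standard argument and is essentially correct. The reversibility of the local rules is exactly Remark~\ref{rmk:symmetry-of-local-rule} (applied to conjugate partitions), and the inductive filling of the triangular diagram from either border is routine growth-diagram bookkeeping. Your phrasing of the column/row condition is slightly loose---in a matching each element lies in at most one pair, hence is the smaller endpoint of at most one pair (giving at most one cross in its column) and the larger endpoint of at most one pair (giving at most one cross in its row)---but the conclusion is right.

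You correctly isolate the only substantive step: identifying the maximal number of parts among the diagonal partitions with the maximal size of a set of pairwise crossing arcs. Concretely, the crosses in columns $1,\dots,k$ and rows $k+1,\dots,r$ form a rectangular filling whose top-right corner carries $\mu^k$; by the (transposed) Greene invariant for growth diagrams the number of parts of $\mu^k$ equals the length of the longest chain of crosses with strictly increasing column and row indices in that rectangle, and such chains are exactly the sets of mutually crossing arcs whose openers lie in $\{1,\dots,k\}$ and closers in $\{k+1,\dots,r\}$. Taking the maximum over $k$ yields the claim. This is the argument of Chen--Deng--Du--Stanley--Yan and Krattenthaler, and is what Sundaram's original RSK proof amounts to once translated into growth-diagram language.
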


An example for this procedure, which also illustrates
Theorem~\ref{thm:s1q-partial-matchings}, can be found in
Figure~\ref{fig:s1q-partial-matchings}.  Let $\osc$ be the
$3$-symplectic oscillating tableau
\[
(\emptyset,1,11,21,2,21,11,21,211,21),
\]
whose partitions label the
corners of the diagonal of the first growth diagram.  Applying the
backward rules, we obtain the matching and the partial standard Young
tableau \Yvcentermath1
\[
\M(\osc)=\big\{\{1,4\},\{2,9\},\{3,6\}\big\}\text{ and }
\M_T(\osc)=\young(57,8).
\]
Using Lemma~\ref{lem:extend-s_1q} and the local rule in
Definition~\ref{def:local-rules} one can compute that $\ev\osc$ is
the $3$-symplectic oscillating tableau labelling the corners of the
diagonal of the second growth diagram.  Applying the backward rules
again, we obtain the matching and the partial standard Young tableau
predicted by Theorem~\ref{thm:s1q-partial-matchings}: \Yvcentermath1
\[
\M(\ev\osc)=\big\{\{1,8\},\{4,7\},\{6,9\}\big\}\text{ and }
\M_T(\ev\osc)=\young(25,3).
\]

\subsection{A new variant for Stembridge's alternating tableaux}
\label{sec:Stembridge}

In this section we present a variation of Sundaram's bijection for
alternating tableaux and permutations.

Recall that a staircase is a vector with weakly decreasing integer
entries. The \Dfn{positive part} of the staircase is the partition
obtained by removing all entries less than or equal to zero. The
\Dfn{negative part} of the staircase is the partition obtained by
removing all entries greater than or equal to zero, removing the
signs of the remaining entries and reversing the sequence.
\begin{defn}\label{defn:P}
  Let $\alt=(\wgt^0,\wgt^1,\ldots,\wgt^{2r})$ be an alternating
  tableau.  The associated growth diagram $\G(\alt)$ is an
  $r\times r$ square of cells, obtained as follows:
  \begin{itemize}\setlength{\itemsep}{0pt}
  \item[P1] Label the north east corners of the cells on the main diagonal and
    the first superdiagonal from the top-left to the bottom-right
    with the staircases in $\alt$.
  \item[P2] Apply the backward rules on the positive parts of the
    staircases to determine which cells below the diagonal contain a
    cross.
  \item[P3] Use the backward rules (rotated by $180\degree$) on the
    negative parts of the staircases to determine which cells above
    the diagonal contain a cross.
  \end{itemize}

  Let $\Perm(\alt)$ be the partial permutation mapping $i$ to $j$ for
  every cross in column $i$ and row $j$ of $\G(\alt)$, and let
  $\big(\Perm_P(\alt), \Perm_Q(\alt)\big)$ be the pair of partial
  standard Young tableaux corresponding to the sequence of partitions
  along the bottom and the right border of $\G(\alt)$, respectively.
\end{defn}

\begin{thm}
  The map
  $\alt\mapsto\big(\Perm(\alt), \Perm_P(\alt), \Perm_Q(\alt)\big)$ is
  a bijection between alternating tableaux of length $r$ and shape
  $\mu$, and triples consisting of a bijection $\Perm(\alt):R\to S$
  between two subsets of $\{1,\dots,r\}$, and two partial standard
  Young tableaux $\Perm_P(\alt)$ and $\Perm_Q(\alt)$.  The shapes of
  these tableaux are obtained by separating the positive and negative
  entries of $\mu$.  The entries of the first tableau then form the
  complementary subset of $R$, the entries of the second form the
  complementary subset of $S$.

  Moreover, the map $\alt\mapsto\Perm(\alt)$ is a bijection between
  $\GL(n)$-alternating tableaux of length $r$ and empty shape and
  permutations of~$\{1,\dots,r\}$ whose longest increasing
  subsequence has length at most $n$.
\end{thm}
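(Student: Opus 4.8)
The plan is to reduce both assertions to the bijectivity of Fomin's growth-diagram construction for the Robinson-Schensted correspondence, run separately in the two triangular regions into which the zigzag path carrying the staircases cuts the square $\G(\alt)$. The starting observation is that a staircase is recovered from its positive part and its negative part, so $\alt$ carries exactly the information of the pair of sequences $\big((\wgt^i_+)_{i=0}^{2r},(\wgt^i_-)_{i=0}^{2r}\big)$. A case analysis on the sign of the modified entry then shows that the alternating condition of Definition~\ref{defn:alternating} is equivalent to the dichotomy: at an add-step $\wgt^{2k}\to\wgt^{2k+1}$ either the positive part grows by one cell and the negative part is unchanged, or the negative part shrinks by one cell and the positive part is unchanged, and at a subtract-step $\wgt^{2k-1}\to\wgt^{2k}$ the two roles are exchanged. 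In particular, read along the zigzag, both the positive-part and the negative-part sequence are admissible outer boundaries for a Fomin growth diagram of exactly the type used in Roby's description of Sundaram's correspondence in Section~\ref{sec:Roby}, the only novelty being that a boundary edge may now be stationary; this is harmless, such an edge simply being a column or row that carries no cross.

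Granting this, I would check that rules P1--P3 genuinely produce a growth diagram and identify its output. Applying the backward rules of Figure~\ref{fig:growth-cell} to the positive parts below the zigzag fills that region in cell by cell, and an easy induction, just as in Roby's setting, shows that the labels along one leg of the region are all empty while the labels along the other leg record a partial standard Young tableau, namely $\Perm_P(\alt)$, and that the crosses in the interior form a partial permutation matrix. Symmetrically, the backward rules rotated by $180\degree$, applied to the negative parts above the zigzag, produce $\Perm_Q(\alt)$ together with a second partial permutation matrix. By the bijectivity of Fomin's rules, each triangular region with its boundary data is in bijection with such local data, the forward rules giving the inverse passage; the one extra ingredient is the dichotomy above, which is precisely what guarantees that running the forward rules returns \emph{staircases} along the zigzag, i.e.\ that the reconstructed positive and negative parts glue. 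Since $\wgt^{2r}=\wgt$ labels the far corner of the zigzag, the shapes of $\Perm_P(\alt)$ and $\Perm_Q(\alt)$ are the positive and negative parts of the shape. It remains to merge the two partial permutation matrices into a single one, for which it suffices to rule out a column (or a row) carrying a cross in both regions; but a cross in a column in one region forces the corresponding step of $\alt$ to move the part belonging to that region, and by the dichotomy the same step cannot move the other part. This proves the first statement.

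For the ``moreover'', when the shape is empty the tableaux $\Perm_P(\alt)$ and $\Perm_Q(\alt)$ are empty and every column and every row carries a cross, so $\Perm(\alt)$ is a permutation of $\{1,\dots,r\}$. What remains is to match the condition that $\alt$ be a $\GL(n)$-alternating tableau, namely $\ell(\wgt^i_+)+\ell(\wgt^i_-)\le n$ for all $i$, with the condition that the longest increasing subsequence of $\Perm(\alt)$ has length at most $n$, and I expect this to be the main obstacle. It amounts to a Greene-type identity, $\max_i\big(\ell(\wgt^i_+)+\ell(\wgt^i_-)\big)$ equals the length of a longest increasing subsequence of $\Perm(\alt)$. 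The natural route is to track how a longest increasing subsequence of crosses in $\G(\alt)$ splits into the portion below the zigzag and the portion above it: by Greene's theorem applied inside each region, the first portion is controlled by the number of parts of the positive-part partitions along the zigzag and the second by the number of parts of the negative-part partitions, so that at the index $i$ where the subsequence crosses the zigzag the two summands $\ell(\wgt^i_+)$ and $\ell(\wgt^i_-)$ account for the two portions.
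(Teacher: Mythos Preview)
The paper states this theorem without proof, so there is no argument to compare against directly.

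Your plan for the first assertion is sound and is the natural one: the two triangular halves of $\G(\alt)$ are Fomin growth diagrams for the positive and negative parts of the staircases, and bijectivity reduces to that of the Robinson--Schensted growth rules. The dichotomy you describe is exactly what glues the two halves, and the observation that no column can carry a cross in both halves is correct, since the right-step $\wgt^{2c-2}\to\wgt^{2c-1}$ at column~$c$ changes either the positive or the negative part but not both. Some care is needed for the diagonal cells (the fixed points of $\Perm(\alt)$), which sit on the zigzag itself rather than strictly below or above it, but this is easily handled.

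Your approach to the ``moreover'' clause, however, does not work. The identity
\[
\max_i\big(\ell(\wgt^i_+)+\ell(\wgt^i_-)\big)\;=\;\text{length of a longest increasing subsequence of }\Perm(\alt)
\]
is false. For $r=3$ take $\alt_0=(\emptyset,1,1\bar1,11\bar1,1\bar1,1,\emptyset)$, the unique length-$3$ alternating tableau of extent~$3$ from Lemma~\ref{lem:bound}; the paper records $\Perm(\alt_0)=231$, whose longest increasing subsequence has length~$2$. The flaw in your splitting argument is geometric. At a zigzag corner $(k,k)$, the rectangle feeding $\wgt_+$ is $\{c\le k,\ d>k\}$ and the one feeding $\wgt_-$ is $\{c>k,\ d\le k\}$. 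An increasing pair of crosses (columns and rows both increasing) cannot have one member in each rectangle, since that would force $d_1>k\ge d_2$ together with $d_1<d_2$. So the two increasing subsequences you extract from the two rectangles never concatenate into a single one, and your inequality only goes one way.

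In fact the ``moreover'' clause as printed appears to be in error. For $n=1$ the unique $\GL(1)$-alternating tableau of empty shape and length~$r$ maps under $\Perm$ to the identity permutation, whose longest increasing subsequence has length~$r$, not~$1$. For $n=2$ and $r=3$ the image of $\Perm$ is $\{123,132,213,312,321\}$, the permutations of Lemma~\ref{lem:noncrossing}, which is not the set $\{132,213,231,312,321\}$ of permutations with longest increasing subsequence at most~$2$. The extent of $\alt$ genuinely equals $\max_k\big(\mathrm{LIS}\{c\le k:\pi(c)>k\}+\mathrm{LIS}\{c>k:\pi(c)\le k\}\big)$ by Greene's theorem applied in each triangle, but this statistic is not the longest increasing subsequence of~$\pi$.
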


An example for this procedure, which also illustrates
Theorem~\ref{thm:s1q-partial-permutations}, can be found in
Figure~\ref{fig:s1q-partial-permutations}.  We render fixed points as
\fixcross, other crosses below the diagonal as \poscross\ and crosses
above the diagonal as \negcross.  The reason for doing so is given by
Corollary~\ref{cor:relate-fillings} in Section~\ref{sec:alternating},
where we show that the growth diagram of an alternating tableau and
its evacuation diagram are very closely related.

Let $\alt$ be the $\GL(13)$-alternating tableau of length $7$
\[
(\emptyset, 1, 1\bar1, 2\bar1, 2\bar2, 2\bar1, 2\bar1\bar1,
3\bar1\bar1, 2\bar1\bar1, 2\bar1, 2\bar2, 3\bar2, 3\bar3, 31\bar3,
21\bar3),
\]
where we write the negative entries with bars and omit zeros for
readability.  Its staircases label the corners of the diagonal of the
first growth diagram.  Applying the backward rules we obtain the
partial permutation and the partial standard Young tableaux
\Yvcentermath1
\[
\Perm(\alt)=\big\{(3,2),(4,4),(5,1),(6,7)\big\},
\Perm_P(\alt)=\young(356),\text{ and } \Perm_Q(\alt)=\young(12,7).
\]
The second growth diagram in the figure is obtained by applying the
same procedure to $\ev\alt =\cactus_{1,7}\alt$, which yields 
\Yvcentermath1
\[
\Perm(\ev\alt)=\big\{(2,1),(3,7),(4,4),(5,6)\big\},
\Perm_P(\ev\alt)=\young(235),\text{ and }
\Perm_Q(\ev\alt)=\young(17,6).
\]
as predicted by Theorem~\ref{thm:s1q-partial-permutations}.

In the example above, we could have obtained the same sequence of
positive and negative parts of the staircases from a
$\GL(3)$-alternating tableau, removing ten zeros from each vector.
As it turns out, evacuation of this alternating tableau yields the
same result as above, although for $r=7$
Theorem~\ref{thm:s1q-partial-permutations} applies only when $n$ is
at least~$13$. The computation of the evacuated alternating tableau
is carried out in Figure~\ref{fig:ev-adjoint-example}.

The $\GL(2)$-alternating tableau
$\alt=(\emptyset, 10, 1\bar1, 10, 1\bar1)$ of length $2$
illustrates the necessity of the hypothesis restricting the length of
the alternating tableau in
Theorem~\ref{thm:s1q-partial-permutations}.  On the one hand, this
tableau is fixed by $\ev=\cactus_{1,2}$.  On the other hand,
$\Perm(\alt) = \{(2,1)\}$, whose reverse-complement is $\{(1,2)\}$.

Similarly, to justify the necessity of the hypothesis in Theorem~\ref{thm:rotation-promotion-permutations}, consider the $\GL(3)$-alternating tableau in the first row of diagram~\eqref{eq:pr-adjoint-example}, which corresponds to the permutation depicted in Figure~\ref{fig:chord-diagrams}.  Its promotion, as computed in the last row of diagram~\eqref{eq:pr-adjoint-example}, corresponds to the permutation $23514$, which differs from the rotated permutation.

\begin{figure}
  \begin{center}
\begin{tikzpicture}[scale=1]\tiny
\draw[line width=1.5pt] (8,1) -- (8,2) -- (7,2) -- (7,3) -- (6,3) -- (6,4) --
(5,4) -- (5,5) -- (4,5) -- (4,6) -- (3,6) -- (3,7) -- (2,7) -- (2,8) -- (1,8);
\foreach \y in {1,...,8}
   \draw (1,\y)--(8,\y);
\foreach \x in {1,2,...,8}
   \draw (\x,1)--(\x,8);
\foreach \x in {1,...,7}
   \draw (\x.5,8.3) node{\footnotesize$\x$};
\foreach \x in {1,...,7}
   \draw (0.5,8.4-\x) node{\footnotesize$\x$};
\draw (5.5,7.5) node{\negcross};
\draw (3.5,6.5) node{\negcross};
\draw (4.5,4.5) node{\fixcross};
\draw (6.5,1.5) node{\poscross};
\draw (.8,.75) node{\footnotesize$\emptyset$};
\foreach \x in {1,...,6}
   \draw (.8,\x.8) node{$\emptyset$};
\draw (0.8,7.75) node{\footnotesize$\emptyset$};
\draw (1.8,0.75) node{\footnotesize$1$};
\foreach \x in {1,...,5}
   \draw (1.8,\x.8) node{$1$};
\draw (1.8,6.75) node{\footnotesize$1\bar1$};
\draw (1.8,7.75) node{\footnotesize$1$};
\draw (2.8,0.75) node{\footnotesize$2$};
\foreach \x in {1,...,4}
   \draw (2.8,\x.8) node{$2$};
\draw (2.8,5.75) node{\footnotesize$2\bar2$};
\draw (2.8,6.75) node{\footnotesize$2\bar1$};
\draw (2.8,7.8) node{$\bar\emptyset$};
\draw (3.8,0.75) node{\footnotesize$2$};
\foreach \x in {1,2,3}
   \draw (3.8,\x.8) node{$2$};
\draw (3.7,4.75) node{\footnotesize$2\bar1\bar1$};
\draw (3.8,5.75) node{\footnotesize$2\bar1$};
\draw (3.8,6.8) node{$\bar 1$};
\draw (3.8,7.8) node{$\bar\emptyset$};
\draw (4.8,0.75) node{\footnotesize$2$};
\draw (4.8,1.8) node{$2$};
\draw (4.8,2.8) node{$2$};
\draw (4.75,3.75) node{\footnotesize$2\bar1\bar1$};
\draw (4.75,4.75) node{\footnotesize$3\bar1\bar1$};
\foreach \x in {5,6}
   \draw (4.8,\x.8) node{$\bar1$};
\draw (4.8,7.8) node{$\bar\emptyset$};
\draw (5.8,0.75) node{\footnotesize$2$};
\draw (5.8,1.8) node{$2$};
\draw (5.8,2.75) node{\footnotesize$2\bar2$};
\draw (5.75,3.75) node{\footnotesize$2\bar1$};
\draw (5.8,4.8) node{$\bar1$};
\foreach \x in {5,6,7}
   \draw (5.8,\x.8) node{$\bar\emptyset$};
\draw (6.7,0.75) node{\footnotesize$2$};
\draw (6.7,1.75) node{\footnotesize$3\bar3$};
\draw (6.75,2.75) node{\footnotesize$3\bar2$};
\draw (6.8,3.8) node{$\bar1$};
\draw (6.8,4.8) node{$\bar1$};
\draw (6.8,5.8) node{$\bar\emptyset$};
\draw (6.8,6.8) node{$\bar\emptyset$};
\draw (6.8,7.8) node{$\bar\emptyset$};
\draw (7.7,0.75) node{\footnotesize$21\bar3$};
\draw (7.7,1.75) node{\footnotesize$31\bar3$};
\draw (7.8,2.75) node{\footnotesize$\bar2$};
\draw (7.8,3.75) node{\footnotesize$\bar1$};
\draw (7.8,4.75) node{\footnotesize$\bar1$};
\draw (7.8,5.75) node{\footnotesize$\bar\emptyset$};
\draw (7.8,6.75) node{\footnotesize$\bar\emptyset$};
\draw (7.8,7.75) node{\footnotesize$\bar\emptyset$};
\end{tikzpicture}
\hfil
\begin{tikzpicture}[scale=1]\tiny
\draw[line width=1.5pt] (8,1) -- (8,2) -- (7,2) -- (7,3) -- (6,3) -- (6,4) --
(5,4) -- (5,5) -- (4,5) -- (4,6) -- (3,6) -- (3,7) -- (2,7) -- (2,8) -- (1,8);
\foreach \y in {1,...,8}
   \draw (1,\y)--(8,\y);
\foreach \x in {1,2,...,8}
   \draw (\x,1)--(\x,8);
\foreach \x in {1,...,7}
   \draw (\x.5,8.3) node{\footnotesize$\x$};
\foreach \x in {1,...,7}
   \draw (0.5,8.4-\x) node{\footnotesize$\x$};
\draw (9-5.5,9-7.5) node{\poscross};
\draw (9-3.5,9-6.5) node{\poscross};
\draw (9-4.5,9-4.5) node{\fixcross};
\draw (9-6.5,9-1.5) node{\negcross};
\draw (.8,.75) node{\footnotesize$\emptyset$};
\foreach \x in {1,...,6}
   \draw (.8,\x.8) node{$\emptyset$};
\draw (0.8,7.75) node{\footnotesize$\emptyset$};
\draw (1.8,0.75) node{\footnotesize$1$};
\foreach \x in {1,...,5}
   \draw (1.8,\x.8) node{$1$};
\draw (1.8,6.75) node{\footnotesize$1\bar1$};
\draw (1.8,7.75) node{\footnotesize$1$};
\draw (2.8,0.75) node{\footnotesize$1$};
\foreach \x in {1,...,4}
   \draw (2.8,\x.8) node{$1$};
\draw (2.8,5.75) node{\footnotesize$1\bar1$};
\draw (2.8,6.75) node{\footnotesize$1$};
\draw (2.8,7.8) node{$\bar\emptyset$};
\draw (3.8,0.75) node{\footnotesize$1$};
\foreach \x in {1,2,3}
   \draw (3.8,\x.8) node{$2$};
\draw (3.7,4.75) node{\footnotesize$2\bar2$};
\draw (3.8,5.75) node{\footnotesize$2\bar1$};
\draw (3.8,6.8) node{$\bar\emptyset$};
\draw (3.8,7.8) node{$\bar\emptyset$};
\draw (4.8,0.75) node{\footnotesize$1$};
\draw (4.8,1.8) node{$2$};
\draw (4.8,2.8) node{$2$};
\draw (4.8,3.75) node{\footnotesize$2\bar2$};
\draw (4.75,4.75) node{\footnotesize$3\bar2$};
\draw (4.75,5.8) node{$\bar1$};
\foreach \x in {6,7}
   \draw (4.8,\x.8) node{$\bar\emptyset$};
\draw (5.8,0.75) node{\footnotesize$1$};
\draw (5.8,1.8) node{$2$};
\draw (5.8,2.75) node{\footnotesize$3\bar3$};
\draw (5.75,3.75) node{\footnotesize$3\bar2$};
\draw (5.8,4.8) node{$\bar2$};
\draw (5.8,5.8) node{$\bar1$};
\foreach \x in {6,7}
   \draw (5.8,\x.8) node{$\bar\emptyset$};
\draw (6.7,0.75) node{\footnotesize$11$};
\draw (6.7,1.75) node{\footnotesize$21\bar3$};
\draw (6.7,2.75) node{\footnotesize$31\bar3$};
\draw (6.8,3.8) node{$\bar2$};
\draw (6.8,4.8) node{$\bar2$};
\draw (6.8,5.8) node{$\bar1$};
\draw (6.8,6.8) node{$\bar\emptyset$};
\draw (6.8,7.8) node{$\bar\emptyset$};
\draw (7.7,0.75) node{\footnotesize$21\bar3$};
\draw (7.7,1.75) node{\footnotesize$22\bar3$};
\draw (7.8,2.75) node{\footnotesize$\bar3$};
\draw (7.8,3.75) node{\footnotesize$\bar2$};
\draw (7.8,4.75) node{\footnotesize$\bar2$};
\draw (7.8,5.75) node{\footnotesize$\bar1$};
\draw (7.8,6.75) node{\footnotesize$\bar\emptyset$};
\draw (7.8,7.75) node{\footnotesize$\bar\emptyset$};
\end{tikzpicture}
  \end{center}
  \caption{A pair of growth diagrams $\G(\alt)$ and
    $\G(\cactus_{1,7}\alt)$, with $\alt=(\emptyset, 1, 1\bar1, 2\bar1, 2\bar2, 2\bar1, 2\bar1\bar1,
3\bar1\bar1, 2\bar1\bar1, 2\bar1, 2\bar2, 3\bar2, 3\bar3, 31\bar3,
21\bar3)$, illustrating
    Theorem~\ref{thm:s1q-partial-permutations}.}
  \label{fig:s1q-partial-permutations}
\end{figure}

\section{Proofs}\label{sec:proofs}

Our strategy is as follows.  We first consider only $\GL(n)$-alternating tableaux of empty shape and length $r$ with $n\geq r$, and show that the bijection $\Perm$ presented in Section~\ref{sec:Stembridge} intertwines rotation and promotion.  To do so, we demonstrate  that the middle row of the promotion diagram~\eqref{eq:pr-adjoint-schema} of an alternating tableau $\alt$ can be interpreted as corresponding to a single-step rotation of the rows of the growth diagram $\G(\alt)$. Then, using a very similar argument, we find that the promotion of $\alt$ corresponds to a single-step rotation of the columns of the growth diagram just obtained.

To prove the statements concerning evacuation, we show that the permutation $\Perm(\alt)$ can actually be read off directly from the evacuation diagram. In particular, this makes the effect of evacuation on $\Perm(\alt)$ completely transparent. The effect of the evacuation of an arbitrary alternating tableau $\alt$ on the triple $\big(\Perm(\alt),\Perm_P(\alt),\Perm_Q(\alt)\big)$ is deduced from the special case of alternating tableaux of empty shape by extending $\alt$ to an alternating tableau of empty shape.

In order to determine the exact range of validity of Theorem~\ref{thm:rotation-promotion-permutations} we use a stability phenomenon proved in Section~\ref{sec:stability}.  The case $n=2$ is treated completely separately in Section~\ref{sec:GL2}.

Finally, in Section~\ref{sec:oscillating}, we deduce the statements for oscillating tableaux and the vector representation of the symplectic groups, Theorem~\ref{thm:s1q-partial-matchings} and~\ref{thm:rotation-promotion-matchings}, from the statements for alternating tableaux.

\subsection{Stability}\label{sec:stability}
In this section we prove a stability phenomenon needed for
establishing the exact bounds in
Theorem~\ref{thm:rotation-promotion-permutations}.  Given the lack of
an embedding of $\GL(n)$-alternating tableaux in the set of
$\GL(n+1)$-alternating tableaux that is compatible with promotion,
this theorem may be interesting in its own right.
\begin{thm}\label{thm:stability}
    Let $\alt$ be a $\GL(n)$-alternating tableau, not necessarily of empty shape, and suppose that each staircase in $\alt$ and $\pr\alt$ contains at most $m$ nonzero parts.

    Then $\pr\tilde\alt=\widetilde{\pr\alt}$, where $\tilde\alt$ and $\widetilde{\pr\alt}$ are the $\GL(m)$-alternating tableaux obtained from $\alt$ and $\pr\alt$ by removing $n-m$ zeros from each staircase.
\end{thm}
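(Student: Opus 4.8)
The plan is to work directly with the promotion diagram~\eqref{eq:pr-adjoint-schema} and to show that the local rule~\eqref{eq:local} applied to $\GL(n)$-weights ``does not see'' the trailing zeros, provided no weight appearing in the computation ever needs more than $m$ nonzero parts. Concretely, write each staircase $\wgt\in\ZZ^n$ appearing in $\alt$ (and, by hypothesis, in $\pr\alt$) as $\wgt=(\wgt_1,\dots,\wgt_m,0,\dots,0)$ up to the $\fS_n$-action implicit in $\dom_W$; the $\GL(m)$-tableau $\tilde\alt$ is obtained by forgetting the last $n-m$ coordinates. I would first record the elementary fact that for a vector $v\in\ZZ^n$ whose multiset of entries is supported on at most $m$ values away from $0$ — more precisely, such that $\dom_{\fS_n}(v)$ has at most $m$ nonzero parts — the operation $\dom_{\fS_n}$ commutes with deleting trailing zeros: sorting $v$ and then dropping the zero tail gives the same partition pair (positive part, negative part) as dropping $n-m$ zeros first and sorting in $\fS_m$. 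This is the only place the hypothesis ``at most $m$ nonzero parts'' is used, and it is exactly what makes the truncation well-defined on weights.

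Next I would run the local rule square by square. In~\eqref{eq:local} with $B,C$ the crystals of the vector representation of $\GL(n)$ and its dual, each letter $\hat b,\hat c$ is a single $\pm e_k$, and $\mu=\dom_{\fS_n}(\kappa+\nu-\lambda)$. The key claim is: if $\kappa,\lambda,\nu$ are weights with at most $m$ nonzero parts that lie in the image of the truncation, then $\mu$ also has at most $m$ nonzero parts, and truncating $\kappa,\lambda,\nu$ to $\ZZ^m$ and applying the $\GL(m)$ local rule yields the truncation of $\mu$. The containment ``$\mu$ has at most $m$ nonzero parts'' is not automatic from $\kappa,\lambda,\nu$ alone — this is precisely why the theorem must also assume that the staircases of $\pr\alt$ have at most $m$ nonzero parts: the weights in the bottom and middle rows of the promotion diagram of $\alt$ are cumulative sums built from the entries of $\alt$ and $\pr\alt$, so once we know both the first row and the last row are ``$m$-narrow'', monotonicity of the local rule (a weight on an edge is squeezed between its two neighbours) forces every intermediate weight in the square to be $m$-narrow as well. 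I would phrase this as: the middle-row weights $\hm^j$ and bottom-row weights $\Pm^j$ are each dominated entrywise, after sorting, by $\max(\wgt^j,\text{stuff from }\pr\alt)$, and a short induction along the rows of~\eqref{eq:pr-adjoint-schema} pins them down. Granting narrowness, the commutation with truncation is then the coordinatewise statement from the first paragraph applied to $\kappa+\nu-\lambda$.

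With these two ingredients, the proof assembles as follows: build the promotion diagram of $\alt$ over $\GL(n)$; by the narrowness claim every entry in it is $m$-narrow; by the truncation-commutation claim, deleting the last $n-m$ coordinates from every weight in the diagram produces exactly the promotion diagram of $\tilde\alt$ over $\GL(m)$ — the top row becomes $\tilde\alt$, the appended bottom-row tail becomes $\widetilde{\pr\alt}$, and every local-rule square remains valid. Reading off the bottom row and the top-right corner then gives $\pr\tilde\alt=\widetilde{\pr\alt}$. I expect the main obstacle to be the narrowness claim: one has to argue carefully that no weight in the interior of the promotion diagram exceeds $m$ nonzero parts, using only the hypothesis on the first and last rows, and the cleanest route is probably to invoke the symmetry of the local rule (Remark~\ref{rmk:symmetry-of-local-rule}) so that one may run the rule from the $\pr\alt$ side as well, sandwiching each interior weight between an $\alt$-weight and a $\pr\alt$-weight in the dominance order. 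Everything else — the coordinatewise behaviour of $\dom_{\fS_n}$, the bookkeeping of which corner of the diagram carries which truncated staircase — is routine.
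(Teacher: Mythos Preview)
Your proof has a genuine gap at the narrowness step. You claim that because the top row $\alt$ and the bottom row $\pr\alt$ of the promotion diagram are $m$-narrow, a sandwich argument via the symmetry of the local rule forces every middle-row weight $\hm^j$ to be $m$-narrow as well. This is false, and the paper flags it immediately before its own proof: ``it may well be that some staircases in the intermediate row $\halfpr\alt$ have more than $m$ nonzero parts.'' The local rule $\mu=\dom_{\fS_n}(\kappa+\nu-\lambda)$ does not bound $\extent(\mu)$ by $\max\big(\extent(\kappa),\extent(\lambda),\extent(\nu)\big)$, so an induction along the middle row cannot get started, and running the rule backwards from the $\pr\alt$ side does not help either: the paper's case analysis explicitly treats $3\times3$ blocks (diagram~\eqref{eq:square-of-staircases}) in which the entire middle row $\beta,\varepsilon,\gamma$ has no zero while every staircase in the top and bottom rows does. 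Consequently your truncation ``delete $n-m$ zeros'' is not even defined on some of the weights to which you must apply it.

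The paper's proof does not attempt to establish narrowness of the middle row. Instead it reduces to $m=n-1$, \emph{extends} the truncation $\rho\mapsto\tilde\rho$ to staircases $\rho$ without a zero (such a $\rho$ necessarily has an entry equal to $1$ followed immediately by a negative entry; one removes that $1$ and adds $1$ to the next coordinate), and then verifies by hand that the four local-rule identities on the $3\times3$ block survive this extended truncation, via a case analysis on which of $\beta,\varepsilon,\gamma$ lack a zero. Your first paragraph---the commutation of $\dom_{\fS_n}$ with deleting zeros when a common zero position is available---corresponds to the easy case in which all four corners of a square already contain a zero; the substance of the argument lies in the remaining cases, which your plan does not address.
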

Before proceeding to the proof, let us remark that this is not a trivial statement: it may well be that some staircases in the intermediate row $\halfpr\alt$ have more than $m$ nonzero parts.
\begin{proof}
  It suffices to consider the case $m=n-1$. We show inductively that
  the statement is true for every square of staircases in
  diagram~\eqref{eq:pr-adjoint-schema}
    \begin{equation}\label{eq:square-of-staircases}
      \tikzset{ short/.style={ shorten >=7pt, shorten <=7pt } }
      \begin{tikzpicture}[xscale=2.5,yscale=1.5,baseline=(current bounding box.center)]
        \node (ka) at (0,0) {$\kappa=\Pm^{2i-4}$};
        \node (be) at (0,1) {$\beta=\hm^{2i-3}$};
        \node (la) at (0,2) {$\lambda=\wgt^{2i-2}$};
        \node (de) at (1,0) {$\delta=\Pm^{2i-3}$};
        \node (ep) at (1,1) {$\varepsilon=\hm^{2i-2}$};
        \node (al) at (1,2) {$\alpha=\wgt^{2i-1}$};
        \node (mu) at (2,0) {$\mu=\Pm^{2i-2}$};
        \node (ga) at (2,1) {$\gamma=\hm^{2i-1}$};
        \node (nu) at (2,2) {$\nu=\wgt^{2i}$};
        \draw[->] (ka) -- (be) node [midway, left] {$+$};
        \draw[->] (be) -- (la) node [midway, left] {$-$};
        \draw[->] (de) -- (ep) node [midway, left] {$+$};
        \draw[->] (ep) -- (al) node [midway, left] {$-$};
        \draw[->] (mu) -- (ga) node [midway, left] {$+$};
        \draw[->] (ga) -- (nu) node [midway, left] {$-$};
        \draw[->] (ka) -- (de) node [midway, above] {$+$};
        \draw[->] (de) -- (mu) node [midway, above] {$-$};
        \draw[->] (be) -- (ep) node [midway, above] {$+$};
        \draw[->] (ep) -- (ga) node [midway, above] {$-$};
        \draw[->] (la) -- (al) node [midway, above] {$+$};
        \draw[->] (al) -- (nu) node [midway, above] {$-$};
      \end{tikzpicture}
    \end{equation}
    where a $+$ between two staircases indicates that a unit vector
    is added to the staircase on the left (respectively, in the lower
    row) to obtain the staircase on the right (respectively, in the
    upper row).

    By assumption, all staircases in the top and bottom row contain
    at least one zero entry.  For such a staircase $\rho\in\ZZ^n$,
    let $\tilde\rho\in\ZZ^{n-1}$ be the staircase obtained from
    $\rho$ by removing a zero entry.  If $\rho$ does not contain a
    zero, it must contain an entry $1$ (say, at position $i$),
    followed by a negative entry.  In this case,
    $\tilde\rho\in\ZZ^{n-1}$ is obtained from $\rho$ by removing
    $\rho_i$ and adding $1$ to $\rho_{i+1}$.

    With this notation, we have to show the following four equalities
\begin{enumerate}[(a)]
\item\label{eq:stab-a} $\tilde\varepsilon = \dom_{\fS_{n-1}}(\tilde\beta + \tilde\alpha-\tilde\lambda)$,
\item\label{eq:stab-b} $\tilde\gamma = \dom_{\fS_{n-1}}(\tilde\varepsilon + \tilde\nu-\tilde\alpha)$,
\item\label{eq:stab-c} $\tilde\delta = \dom_{\fS_{n-1}}(\tilde\kappa + \tilde\varepsilon-\tilde\beta)$, and
\item\label{eq:stab-d} $\tilde\mu = \dom_{\fS_{n-1}}(\tilde\delta + \tilde\gamma-\tilde\varepsilon)$.
\end{enumerate}

Let us first reduce to the case where at least one of the staircases involved does not contain a zero.  Consider a square of staircases
\begin{equation*}
  \tikzset{ short/.style={ shorten >=7pt, shorten <=7pt } }
  \begin{tikzpicture}[xscale=4,yscale=1,baseline=(current bounding box.center)]
    \node (al) at (0,0) {\makebox[3cm]{$\alpha$}};
    \node (be) at (0,1) {\makebox[3cm]{$\beta=\alpha\pm e_i$}};
    \node (ga) at (1,0) {\makebox[3cm]{$\gamma=\dom_{\fS_n}(\alpha\pm e_j)$}};
    \node (de) at (1,1) {\makebox[3cm]{$\delta=\alpha\pm e_i\pm e_j$}};
    \draw[->] (al) -- (be) node [midway, left] {};
    \draw[->] (ga) -- (de) node [midway, right] {};
    \draw[->] (al) -- (ga) node [midway, below] {};
    \draw[->] (be) -- (de) node [midway, above] {};
  \end{tikzpicture}
\end{equation*}
where all of $\alpha$, $\beta$, $\gamma$ and $\delta$ contain a zero.  We first show that there is an index $k\not\in\{i,j\}$ such that $\alpha_k=\beta_k=\delta_k=0$.  Suppose on the contrary that $\alpha_k\neq 0$ for all $k\not\in\{i,j\}$.  Then, since $\beta$ contains a zero, we have $i\neq j$.  Furthermore, we have
\[
\begin{array}{lcll}
  \alpha_i=0&\text{or}&\alpha_j=0,\quad\text{and}\\
  \alpha_i=\mp1&\text{or}&\alpha_j=0,\quad\text{and}\\
  \alpha_i=0&\text{or}&\alpha_j=\mp1,\quad\text{and}\\
  \alpha_i=\mp1&\text{or}&\alpha_j=\mp1
\end{array}
\]
because $\alpha$, $\beta$, $\gamma$ and $\delta$ contain a zero, respectively.  However, this set of equations admits no solution.  Thus, there must be a further zero in $\alpha$ and therefore also in $\beta$, $\gamma$ and $\delta$.  From this it follows that $\tilde\gamma=\dom_{\fS_{n-1}}(\tilde\alpha + \tilde\delta-\tilde\beta)$.

Returning to the square in \eqref{eq:square-of-staircases}, we show
that $\varepsilon$ contains a zero entry if $\beta$ or $\gamma$
do. Suppose on the contrary that $\varepsilon$ does not contain a
zero entry.  Then $\varepsilon=\beta+e_i$, where $i$ is the position
of the (only) zero in $\beta$.  Moreover, we have $\alpha=\beta$,
because there is only one way to obtain a zero entry in $\alpha$ by
subtracting a unit vector.  Thus,
\[
\lambda=\dom_{\fS_n}(\beta+\alpha-\varepsilon) = \dom_{\fS_n}(\beta-e_i) = \beta-e_i,
\]
which implies that $\lambda$ does not contain a zero entry, contradicting our assumption.  Similarly, if $\gamma$ contains a zero at position $i$, we have $\varepsilon=\gamma+e_i$, $\gamma=\delta$ and $\mu=\dom_{\fS_n}(\gamma-e_i)$, a contradiction.

There remain three different cases:

\noindent{\bf $\beta$ contains a zero, but $\gamma$ does not.}

We have to show Equations~\eqref{eq:stab-b} and~\eqref{eq:stab-d}.
Let $\alpha=\varepsilon-e_i$ and $\nu=\varepsilon-e_i-e_j$.  Then
$\gamma=\dom_{\fS_n}(\varepsilon-e_j)$.  Since, by the foregoing,
$\varepsilon$ contains a zero, we have $\varepsilon_j=0$.  Since
$\alpha$ also has a zero we have $i\neq j$.  Since $\nu$ has a zero,
$\varepsilon_i=1$. Because $\gamma$ has no zero, $\mu=\nu$.  Together
with the fact that $\delta$ has a zero, this implies that
$\delta=\varepsilon-e_i$.  The equations can now be checked directly.

\noindent{\bf $\beta$ contains no zero, but $\gamma$ does.}

We have to show Equations~\eqref{eq:stab-a} and~\eqref{eq:stab-c}. Let $\lambda=\beta-e_i$ and $\alpha=\beta-e_i+e_j$.  Then $\varepsilon=\dom_{\fS_n}(\beta+e_j)$.  Since $\beta$ has no zero, but, by the foregoing, $\varepsilon$ does, we have $\beta_j=\bar 1$. Since $\lambda$ has a zero, $\beta_i=1$, and thus $i\neq j$. Because $\beta$ has no zero, $\kappa=\lambda$.  Again, the equations can now be checked directly.

\noindent{\bf None of $\beta$, $\epsilon$ and $\gamma$ contain a zero.}

In this case, $\kappa=\lambda$, $\delta=\alpha$ and $\mu=\nu$.  Let
$\lambda=\beta-e_i$, $\alpha=\beta-e_i+e_j$.  Then
$\varepsilon=\dom_{\fS_n}(\beta+e_j)$.  Thus $\beta_j\neq\bar 1$,
$\beta_i=1$, $\beta_{i+1}\leq\bar 1$ and, because $\alpha\neq\beta$,
we have $i\neq j$.  Because $\alpha$ and $\beta$ are staircases,
$\beta+e_j$ has in fact decreasing entries and
$\varepsilon=\beta+e_j$.  Thus, $\alpha=\varepsilon-e_i$,
$\nu=\varepsilon-e_i-e_k$ and $\gamma=\dom_{\fS_n}(\varepsilon-e_k)$.
Again, because $\varepsilon$ and $\nu$ are staircases,
$\varepsilon-e_k$ has decreasing entries and
$\gamma=\varepsilon-e_k$.  Thus, the equations can now be checked
directly.
\end{proof}

\subsection{Growth diagrams for staircase tableaux}

In this section we set up the notation used in the remaining
sections.  In particular, we slightly modify and generalise the
definition of $\G(\alt)$ from Section~\ref{sec:Stembridge}.
\begin{defn}
  For a pair of partitions $\wgt=(\pos\wgt,\neg\wgt)$, the partition
  $\pos\wgt$ is the \Dfn{positive part} and the partition $\neg\wgt$
  is the \Dfn{negative part}.  Given an integer $n$ not smaller than
  the sum of the lengths of the two partitions,
  $[\pos\wgt,\neg\wgt]_n$ is the staircase
  \[
  ({\pos\wgt}{}_{,0},\pos\wgt{}_{,1},\dots,0,\dots,0,
  \dots,-\neg\wgt{}_{,1},-\neg\wgt{}_{,0}).
  \]

  A \Dfn{staircase tableau} is a sequence of staircases
  $\alt=(\wgt^0,\wgt^1,\dots,\wgt^r)$ such that $\wgt^i$ and
  $\wgt^{i+1}$ differ by a unit vector for $0\leq i<r$.  If
  $\wgt^0=\emptyset$ the tableau is \Dfn{straight}, otherwise it is
  \Dfn{skew}. Unless explicitly stated, we consider only straight
  staircase tableaux.

  The \Dfn{extent}\footnote{It might be more logical to use \lq
    height\rq\ for the extent of a staircase, and \lq length\rq\ for
    the number $n$. However, Stembridge defines the height of a
    staircase as the number $n$. We therefore avoid the words \lq
    length\rq\ and \lq height\rq\ in the context of staircases
    altogether.} $\extent(\wgt)$ of a staircase
  $\wgt=[\pos\wgt,\neg\wgt]_n$ is the number of nonzero entries in
  $\wgt$.  Put differently, the extent is the sum of the lengths of
  the partitions $\pos\wgt$ and $\neg\wgt$.  The \Dfn{extent} of a
  staircase tableau is the maximal extent of its staircases.
\end{defn}

Given a staircase tableau we can create a growth diagram similar to
the procedure used in Section~\ref{sec:Stembridge}.  However, it will
be convenient to label \emph{all} corners of the cells with
staircases, instead of labelling the corners which are not on the
main diagonal or first superdiagonal with a partition instead of a
staircase.
\begin{defn}
  The growth diagram \Dfn{$\G(\alt)$} corresponding to a (straight)
  staircase tableau $\alt$ is obtained in analogy to
  Definition~\ref{defn:P}: label the top-left corner with the
  staircase $\wgt^0$.  If $\wgt^{i+1}$ is obtained from $\wgt^i$ by
  adding (respectively subtracting) a unit vector, $\wgt^{i+1}$
  labels the corner to the right of (respectively below) the corner
  labelled $\wgt^i$.  \emph{All} the remaining corners of $\G(\alt)$
  are then labelled with staircases as follows.  The positive parts
  on the corners to the left and below the path defined by the
  staircase tableau are obtained by applying the backward rule,
  whereas the forward rule determines the positive parts on the
  remaining corners. The negative parts are computed similarly.
\end{defn}
  
Alternatively, we can also create a growth diagram given a partial
filling and two partial standard Young tableaux.
\begin{defn}
  A \Dfn{partial filling} $\phi$ is a rectangular array of cells,
  where every row and every column contains at most one cell with a
  cross.

  Let $\phi$ be a partial filling having crosses in all rows except
  $\mathcal R$ (counted from the top), and in all columns except
  $\mathcal C$ (counted from the left).  Let $P$ and $Q$ be partial
  standard Young tableaux having entries $\mathcal R$ and
  $\mathcal C$ respectively.  Then the growth diagram
  \Dfn{$\G(\phi, P, Q)$} is obtained as follows.  The sequence of
  partitions corresponding to $Q$ (respectively $P$) determines the
  positive (respectively negative) parts of the staircases on the
  bottom (respectively right) border.  The remaining positive and
  negative parts are computed using the forward rule.

  If $\phi$ contains precisely one cross in every row and every
  column, we abbreviate $\G(\phi,\emptyset,\emptyset)$ to
  \Dfn{$\G(\phi)$}.
\end{defn}

Finally, any growth diagram in the sense above can be decomposed into
two classical growth diagrams, where all corners are labelled by
partitions.
\begin{defn}
  \Dfn{$\pos\G$} (respectively \Dfn{$\neg\G$}) denotes the
  (classical) growth diagrams obtained by ignoring the negative
  (respectively positive) parts of the staircases labelling the
  corners of a growth diagram $\G$.
\end{defn}
\begin{rmk}
  The classical growth diagram associated to a (partial) filling $\phi$ is precisely $\pos\G(\phi, Q)$.
\end{rmk}
\begin{rmk}
  Two horizontally adjacent shapes in $\pos\G(\phi, Q)$ differ if and only if there is no cross above in this column.  Two horizontally adjacent shapes in $\neg\G(\phi, Q)$ differ if and only if there is a cross above in this column.
\end{rmk}
\begin{rmk}\label{prop:transposed-growth}
  Transposing a filling $\phi$ is equivalent to interchanging $\pos\G(\phi)$ and $\neg\G(\phi)$.
\end{rmk}

Finally, we introduce the operations on fillings we want to relate to promotion.
\begin{defn}
  Let $\phi$ be a filling of a square grid.  The \Dfn{column rotation $\crot\phi$} (respectively, \Dfn{row rotation $\rrot\phi$}) of the filling $\phi$ is obtained from $\phi$ by removing the first column (respectively, row) and appending it at the right (respectively, bottom).

  The \Dfn{rotation $\rot\phi$} of a filling $\phi$ is $\crot\rrot\phi$.
\end{defn}

\subsection{Promotion and evacuation of alternating tableaux}
\label{sec:alternating}
In this section we prove Theorems~\ref{thm:s1q-partial-permutations}
and~\ref{thm:rotation-promotion-permutations}, with the exception of
the case $n=2$.

Let us first recall a classical fact concerning the effect of
removing the first column of a filling on the growth diagram in terms
of Schützenberger's jeu de taquin.
\begin{prop}[\protect{\cite[A~1.2.10]{MR1676282}}]
  \label{prop:jdt-delete-column}
  Consider the classical growth diagrams $\G$ and $\half\G$ for the
  partial fillings $\phi$ and $\half\phi$, where $\half\phi$ is
  obtained from $\phi$ by deleting its first column. Let $Q$ and
  $\half Q$
  be the partial standard Young tableaux corresponding to the
  sequence of partitions on the top borders
  of the growth diagrams $\G$ and $\half\G$.  Then
  $\half Q = \jdt Q$, the tableau obtained by applying
  Schützenberger's jeu de taquin to $Q$.
\end{prop}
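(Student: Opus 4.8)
The plan is to localise the effect of deleting the leftmost column to a single box that the first column pushes onto the inner border of the growth diagram, and then to recognise the re-growth this forces as a single jeu de taquin slide. So I would first analyse the leftmost column of $\G$. Since the classical growth diagram of the partial filling $\phi$ has inner (bottom--left) border labelled entirely by $\emptyset$, the left edge of the leftmost column of cells carries the label $\emptyset$ at every step, and the bottom corner of that column is $\emptyset$. Running the two forward rules of Figure~\ref{fig:growth-cell} up this column, one checks immediately that the labels along the \emph{right} edge of the leftmost column form the chain $(\emptyset,\dots,\emptyset)$ if the column contains no cross, and the chain $(\emptyset,\dots,\emptyset,(1),(1),\dots,(1))$ — jumping from $\emptyset$ to the one-box partition $(1)$ exactly at the row of the cross — if it does. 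Either way this chain differs from the constant chain $(\emptyset,\dots,\emptyset)$ by at most one box.

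Next I would compare $\G$ with $\half\G$. The diagram $\half\G$ lives on the cells of columns $2,3,\dots$, has exactly the same crosses there as $\G$, and is pinned down by those crosses together with its inner border, which is again constant $\emptyset$. Overlaying it on the corresponding part of $\G$, the only possible discrepancy between the two labellings is the one forced by the inner border: in $\G$ that border is the chain produced above (one box has been pushed in, starting at the cross-row), whereas in $\half\G$ it is constant $\emptyset$. Thus $\G$ restricted to columns $\geq 2$ is obtained from $\half\G$ by feeding at most one extra box in along the inner (left) border, at a prescribed row, and re-applying the forward rules; equivalently, $\half\G$ is obtained from that restriction by deleting that box.

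It then remains to prove the key lemma: deleting the box that the first column pushed onto the inner border, and re-growing by the forward rules, turns the top-border tableau $Q$ of $\G$ into $\half Q$, and this operation is precisely one jeu de taquin slide — that is, $\half Q = \jdt Q$. Here I would use that a growth diagram is well defined, i.e.\ that the forward rules are confluent, so that the re-growth may be carried out one column at a time. In the $j$-th column this amounts to propagating an empty cell upward through a single column of local-rule squares, and a short case check — using the two forward rules together with their symmetry (Remark~\ref{rmk:symmetry-of-local-rule}, transposed as in Figure~\ref{fig:growth-cell}) — identifies each such column step with an elementary jeu de taquin move, degenerating to \lq no move\rq\ in a crossless column or row. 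Assembling the column steps produces a full jeu de taquin slide, which gives $\half Q = \jdt Q$. This is essentially Fomin's original argument.

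The hard part is this last lemma: although no individual step is difficult, making the identification \lq propagation of one box through the forward rules $=$ one jeu de taquin slide\rq\ precise requires careful bookkeeping of the current position of the empty cell, together with a complete (if routine) case analysis of the two local rules, including the degenerate cases arising from the crossless rows and columns inherent to \emph{partial} fillings. By contrast, the first two steps are immediate consequences of the shape of the leftmost column and of the fact that a classical growth diagram is determined by its inner border together with its crosses.
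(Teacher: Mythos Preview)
The paper does not give its own proof of this proposition: it is quoted as a classical fact from Fomin's appendix to Stanley, \cite[A~1.2.10]{MR1676282}, and is used as a black box in the proof of Theorem~\ref{thm:growth-diagram-rotation}. So there is no in-paper argument to compare against.

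Your outline is a faithful reconstruction of Fomin's argument and is correct in substance. The analysis of the leftmost column is right: with the left border constantly $\emptyset$, the right edge of that column is $(\emptyset,\dots,\emptyset,(1),\dots,(1))$, jumping exactly at the cross (or constantly $\emptyset$ if there is no cross). The reduction to comparing two growth diagrams on the same cells and crosses but with inner borders differing by a single box is exactly the right localisation. The identification of the re-growth with a single jeu de taquin slide is the heart of Fomin's proof, and you correctly flag it as the step requiring a careful case check. One small wording issue: the propagation of the one-box discrepancy is not really ``upward through a single column of local-rule squares'' but rather along a staircase path through the array, moving one step right or one step up in each cell according to which of the two local configurations occurs; this is precisely what makes it a jeu de taquin slide rather than a column bump. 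With that adjustment your sketch is complete.
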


The following central result connects the local rule for the symmetric group with column rotation, the operation of moving the first letter of a permutation to the end.
\begin{thm}\label{thm:growth-diagram-rotation}
  Let $\phi$ be a filling of an $r\times r$ square grid having
  exactly one cross in every row and in every column. Let $\lambda$
  and $\nu$ be two adjacent staircases in $\G(\phi)$, not on the left
  border of $\G(\phi)$, and $\lambda$ being to the left of $\nu$ or
  above $\nu$.  Finally, let $\kappa$ and $\mu$ be the two
  corresponding staircases in $\G(\crot\phi)$, that is, the column
  index of $\kappa$ in $\G(\crot\phi)$ is one less than the column
  index of $\lambda$ in $\G(\phi)$.  Then, provided that
  $n\geq\max(\extent(\kappa),\extent(\lambda),\extent(\mu),\extent(\nu))$,
  we have $\mu = \dom_{\fS_n}(\kappa+\nu-\lambda)$.
\end{thm}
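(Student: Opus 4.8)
The plan is to split each staircase into its positive and negative parts, reduce to a statement about the classical growth diagrams $\pos\G(\phi)$, and prove that statement by relating $\crot\phi$ to $\phi$ through deletion of the first column and jeu de taquin.

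The first step is the observation that two adjacent corners of $\G(\phi)$, or of $\G(\crot\phi)$, carry staircases that either coincide or differ by a single unit vector. Hence $\nu-\lambda$ equals $0$, a unit vector $e_i$ (when $\lambda$ lies to the left of $\nu$), or $-e_i$ (when $\lambda$ lies above $\nu$), so that $\kappa+\nu-\lambda$ differs from $\kappa$ in at most one coordinate, and there by at most $1$. Such a perturbation can neither turn a positive coordinate of $\kappa$ nonpositive nor turn a negative one nonnegative. Together with the hypothesis $n\ge\max(\extent(\kappa),\extent(\lambda),\extent(\mu),\extent(\nu))$, which guarantees that the four staircases and also $\dom_{\fS_n}(\kappa+\nu-\lambda)$ fit into $\ZZ^n$, this shows that the identity $\mu=\dom_{\fS_n}(\kappa+\nu-\lambda)$ is equivalent to the two identities obtained by replacing all four staircases by their positive parts, respectively by their negative parts (regarded as partitions). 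By Remark~\ref{prop:transposed-growth} the identity for the negative parts follows from that for the positive parts applied to the transposed filling, so it remains to prove the positive-part version.

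For this I would compare $\pos\G(\phi)$ with $\pos\G(\crot\phi)$ by way of the diagram obtained from $\pos\G(\phi)$ by deleting its first column of cells. Re-establishing an empty left border for this diagram is, by Proposition~\ref{prop:jdt-delete-column}, carried out by a sequence of applications of the local rule of Figure~\ref{fig:growth-cell} realizing jeu de taquin; attaching on the right a column whose unique cross lies in the row vacated by the deletion, and completing with the forward rules, then reproduces $\pos\G(\crot\phi)$. The cells connecting a corner $\nu$ of $\pos\G(\phi)$ to the corner $\mu$ of $\pos\G(\crot\phi)$ one column to its left assemble into a staircase-shaped sheet interpolating between the two growth diagrams. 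I would verify the positive-part identity first on the cells of this sheet that meet the empty left border of $\pos\G(\crot\phi)$, where $\kappa=\emptyset$, by a short case analysis on the position of the row of $\nu$ relative to the two crosses involved, and then propagate it across the sheet: the local rules already valid inside $\pos\G(\phi)$ and inside $\pos\G(\crot\phi)$, together with the forward--backward symmetry of the local rule (Remark~\ref{rmk:symmetry-of-local-rule}), make it possible to slide a diagonal of local-rule applications through the diagram as in the standard treatment of jeu de taquin via local rules. This is also the step where the transposition and arrow reversal relating the local rule of Figure~\ref{fig:growth-cell} to the crystal-theoretic rule~\eqref{eq:local} must be accounted for. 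A parallel argument treats the case in which $\lambda$ lies above $\nu$.

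The step I expect to be the main obstacle is the bookkeeping rather than any single hard idea: identifying which corner of $\G(\crot\phi)$ is the one-column-shifted partner of a given corner of $\G(\phi)$ in each of the two positional cases, carrying out the base case at the left border with the necessary care, reconciling the two conventions for the local rule, and confirming that the bound on $n$ is exactly what the positive/negative recombination of the first step requires --- in particular that adding $\pm e_i$ never forces the last positive entry and the first negative entry of one of the four staircases to collide. Granting the classical positive-part identity and this recombination, the theorem follows.
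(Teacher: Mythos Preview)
Your overall plan matches the paper's: establish the relation separately for the positive and the negative parts via jeu de taquin (Proposition~\ref{prop:jdt-delete-column}), and then recombine into the single $\dom_{\fS_n}$ identity using the bound on~$n$. The genuine gap is in your first step, the claimed equivalence between $\mu=\dom_{\fS_n}(\kappa+\nu-\lambda)$ and the pair of partition identities for the positive and negative parts. Your argument that the perturbation $\nu-\lambda=\pm e_i$ cannot turn a positive coordinate of~$\kappa$ nonpositive nor a negative one nonnegative is correct, but it says nothing about a zero coordinate of~$\kappa$. Concretely, take $\lambda$ above~$\nu$ with the first-column cross of~$\phi$ in the row between them (case~(e) of Figure~\ref{fig:different_cases_column_rotation}). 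Then $\pos\lambda$ has one more part than $\pos\kappa=\pos\nu=\pos\mu$, so $\nu-\lambda=-e_k$ with $k=\ell(\pos\lambda)$ and $\kappa_k=0$; hence $(\kappa+\nu-\lambda)_k=-1$, and after sorting this $-1$ contributes to $\neg\mu$ rather than to $\pos\mu$. The full staircase identity holds, but the ``positive-part identity'' $\pos\mu=\domS(\pos\kappa+\pos\nu-\pos\lambda)$ does not, since the right-hand side is not even a partition. The paper therefore does not split first; it establishes the separate rules for positive and negative parts by a five-way case analysis (with case~(e) done by direct inspection), records the resulting extent inequalities, and only then recombines.

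A smaller issue: your reduction of the negative-part identity to the positive-part identity via Remark~\ref{prop:transposed-growth} tacitly replaces column rotation by row rotation, because $(\crot\phi)^t=\rrot(\phi^t)$; so what you would obtain is the positive-part statement for $\rrot$ applied to~$\phi^t$, not for~$\crot$. The paper handles the negative parts instead with the mirror-image jeu de taquin argument (reading from the right border), and for vertically adjacent~$\lambda,\nu$ uses the left--right reversal of the subfilling, which conjugates the partitions on the right border; see cases~(b)--(d) of Figure~\ref{fig:different_cases_column_rotation}.
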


Because the filling and the staircases of a growth diagram determine
each other uniquely, we immediately obtain the following corollary.
\begin{cor}\label{cor:growth-diagram-rotation}
  Let $\phi$ be a filling of an $r\times r$ square grid having
  exactly one cross in every row and in every column.  Suppose that
  the staircases in $\alt=(1=\wgt^1,\dots,\wgt^{2r}=\emptyset)$ label
  a sequence of adjacent corners from the corner just to the right of
  the top-left corner to the bottom-right corner of $\G(\phi)$.
  Furthermore, suppose that the staircases
  $\half\alt=(\emptyset=\hm^0,\dots,\hm^{2r}=\emptyset)$ satisfy
  $\hm^i = \dom_{\fS_n}(\hm^{i-1}+\wgt^{i+1}-\wgt^i)$ for
  $i\leq 2r-1$.  Then, provided that
  $n\geq\max(\extent(\alt),\extent(\half\alt))$, the filling of
  $\G(\half\alt)$ is $\crot\phi$.
%
\end{cor}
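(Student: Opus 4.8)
The plan is to read the statement off Theorem~\ref{thm:growth-diagram-rotation} together with the fact, already used to pass from that theorem to the corollary, that the filling of a growth diagram and the staircases labelling the corners along any monotone boundary path determine one another. Concretely, let $\Gamma$ be the path of adjacent corners of $\G(\phi)$ carrying the staircases $\wgt^1,\dots,\wgt^{2r}$, extended by the top-left corner, which we label with $\wgt^0=\emptyset$; thus $\Gamma$ runs from the top-left corner to the bottom-right corner. Let $\Gamma'$ be obtained from $\Gamma$ by decreasing the column index of every corner other than the top-left one by $1$. I would first check that $\Gamma$ is a \emph{monotone} corner path: consecutive staircases differ by \emph{adding} a unit vector, which is a step to the right, or by \emph{subtracting} one, a step down; hence all corners of $\Gamma$ except $\wgt^0$ lie in columns $\geq 1$, and so $\Gamma'$ is again a legitimate monotone corner path, now of $\G(\crot\phi)$, starting at its top-left corner.

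Next I would apply Theorem~\ref{thm:growth-diagram-rotation} to each edge $\wgt^i\to\wgt^{i+1}$ of $\Gamma$ with $1\leq i\leq 2r-1$. Neither endpoint of such an edge lies on the left border of $\G(\phi)$ by the previous paragraph, and the extent condition $n\geq\max(\extent(\wgt^i),\extent(\wgt^{i+1}),\extent(\hm^{i-1}),\extent(\hm^i))$ demanded by the theorem follows from the hypothesis $n\geq\max(\extent(\alt),\extent(\half\alt))$. The theorem then identifies the staircases of $\G(\crot\phi)$ along the corresponding shifted edge of $\Gamma'$ as satisfying $\hm^i=\dom_{\fS_n}(\hm^{i-1}+\wgt^{i+1}-\wgt^i)$, with $\hm^0=\emptyset$ forced as the staircase at the top-left corner of $\G(\crot\phi)$. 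These are exactly the defining equations of $\half\alt$, so the staircases of $\G(\crot\phi)$ along $\Gamma'$ are $\hm^0,\dots,\hm^{2r-1}$; adjoining the bottom-right corner of $\G(\crot\phi)$, whose staircase is $\hm^{2r}=\emptyset$ (consistent with $\wgt^{2r}=\emptyset$), we see that the staircase tableau labelling the full monotone boundary path $\Gamma'$ extended to the bottom-right corner is precisely $\half\alt$.

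Finally, since $\G(\half\alt)$ is by construction the growth diagram obtained from the staircase tableau $\half\alt$ by completing it with the forward and backward rules, and the same rules recover $\G(\crot\phi)$ from the staircases along $\Gamma'$ extended to the bottom-right corner, we conclude $\G(\half\alt)=\G(\crot\phi)$; in particular the filling of $\G(\half\alt)$ is $\crot\phi$. I expect the only genuine work to be the index-and-position bookkeeping of the two previous paragraphs: verifying carefully that ``shift every corner one column to the left'' carries $\Gamma$ to an honest corner path $\Gamma'$ of $\G(\crot\phi)$ beginning at its top-left corner, that the row and column indices produced by Theorem~\ref{thm:growth-diagram-rotation} line up with the indexing of the recursion, and that both ends of $\Gamma'$ behave as claimed — everything else being a direct transcription of the theorem.
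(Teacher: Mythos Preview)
Your proposal is correct and is exactly the paper's approach: the paper's entire proof is the sentence immediately preceding the corollary, ``Because the filling and the staircases of a growth diagram determine each other uniquely, we immediately obtain the following corollary,'' and you have simply spelled out how Theorem~\ref{thm:growth-diagram-rotation} feeds the recursion $\hm^i=\dom_{\fS_n}(\hm^{i-1}+\wgt^{i+1}-\wgt^i)$ along the column-shifted path. One cosmetic fix: rather than keeping $\wgt^0$ fixed while shifting the other corners (which would collapse $\wgt^0$ and the shifted $\wgt^1$ onto the same position), just shift the path $(\wgt^1,\dots,\wgt^{2r})$ one column left---its first vertex then lands on the top-left corner $\hm^0=\emptyset$ of $\G(\crot\phi)$, and the bookkeeping you anticipate goes through cleanly.
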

We remark that
Proposition~\ref{prop:jdt-delete-column}, restricted to permutations,
is a special case of Theorem~\ref{thm:growth-diagram-rotation}.  More
precisely, it is obtained by considering the staircase tableau
$(1=\wgt^1,\dots,\wgt^{2r}=\emptyset)$ consisting of the partitions
labelling the corners along the top and the right border of a
classical growth diagram, with the empty shape in the top-left corner
removed.

It is not hard to extend the theorem to partial fillings; the
statement is completely analogous.  Its proof proceeds by extending
the partial filling to a permutation.  However, it turns out to be
more convenient to deduce the statements for staircase tableaux of
non-empty shape from the corresponding statements for staircase
tableaux of empty shape directly.

\begin{figure}
    \begin{enumerate}[(a)]
    \item
        $\phi:$ \begin{tikzpicture}[baseline={([yshift=-.5ex]current bounding box.center)},scale=0.6]
        \node (la) at (2,3) {$\lambda$};
        \node (nu) at (3,3) {$\nu$};
        \draw[-] (la) -- (nu);
        \node at (0.5,1.5) {\huge$\times$};
        \draw[-] (0,0) -- (0,4) -- (4,4) -- (4,0) -- (0,0);
      \end{tikzpicture} \quad $\crot\phi:$
        \begin{tikzpicture}[baseline={([yshift=-.5ex]current bounding box.center)},scale=0.6]
        \node (la) at (1,3) {$\kappa$};
        \node (nu) at (2,3) {$\mu$};
        \draw[-] (la) -- (nu);
        \node at (3.5,1.5) {\huge$\times$};
        \draw[-] (0,0) -- (0,4) -- (4,4) -- (4,0) -- (0,0);
      \end{tikzpicture}
      \begin{minipage}{3.8cm}
      $\pos\lambda=\pos\kappa+\square$\\
      $\extent(\pos\kappa+\pos\nu-\pos\lambda)\leq\extent(\pos\nu)$\\
      $\extent(\neg\kappa+\neg\nu-\neg\lambda)=\extent(\neg\nu)$
      \end{minipage}
      \[\pos\mu = \domS(\pos\kappa+\pos\nu-\pos\lambda),\quad  \neg\lambda = \neg\kappa,\quad \neg\nu = \neg\mu\]

      \item $\phi:$ \begin{tikzpicture}[baseline={([yshift=-.5ex]current bounding box.center)},scale=0.6]
        \node (la) at (2,1) {$\lambda$};
        \node (nu) at (3,1) {$\nu$};
        \draw[-] (la) -- (nu);
        \node at (0.5,2.5) {\huge$\times$};
        \draw[-] (0,0) -- (0,4) -- (4,4) -- (4,0) -- (0,0);
      \end{tikzpicture} \quad $\crot\phi:$
        \begin{tikzpicture}[baseline={([yshift=-.5ex]current bounding box.center)},scale=0.6]
        \node (la) at (1,1) {$\kappa$};
        \node (nu) at (2,1) {$\mu$};
        \draw[-] (la) -- (nu);
        \node at (3.5,2.5) {\huge$\times$};
        \draw[-] (0,0) -- (0,4) -- (4,4) -- (4,0) -- (0,0);
      \end{tikzpicture}
      \begin{minipage}{3.8cm}
      $\neg\mu=\neg\nu+\square$\\
      $\extent(\neg\kappa+\neg\nu-\neg\mu)\leq\extent(\neg\kappa)$\\
      $\extent(\pos\kappa+\pos\nu-\pos\mu)=\extent(\pos\kappa)$
      \end{minipage}
      \[\neg\lambda = \domS(\neg\kappa+\neg\nu-\neg\mu),\quad  \pos\lambda = \pos\kappa,\quad \pos\nu = \pos\mu\]

      \item $\phi:$ \begin{tikzpicture}[baseline={([yshift=-.5ex]current bounding box.center)},scale=0.6]
        \node (la) at (3,3) {$\lambda$};
        \node (nu) at (3,2) {$\nu$};
        \draw[-] (la) -- (nu);
        \node at (0.5,1.5) {\huge$\times$};
        \draw[-] (0,0) -- (0,4) -- (4,4) -- (4,0) -- (0,0);
      \end{tikzpicture} \quad $\crot\phi:$
        \begin{tikzpicture}[baseline={([yshift=-.5ex]current bounding box.center)},scale=0.6]
        \node (la) at (2,3) {$\kappa$};
        \node (nu) at (2,2) {$\mu$};
        \draw[-] (la) -- (nu);
        \node at (3.5,1.5) {\huge$\times$};
        \draw[-] (0,0) -- (0,4) -- (4,4) -- (4,0) -- (0,0);
      \end{tikzpicture}
      \begin{minipage}{3.8cm}
      $\pos\lambda=\pos\kappa+\square$\\
      $\extent(\pos\kappa+\pos\nu-\pos\lambda)\leq\extent(\pos\nu)$\\
      $\extent(\neg\kappa+\neg\nu-\neg\lambda)=\extent(\neg\nu)$
      \end{minipage}
      \[\pos\mu = \domS(\pos\kappa+\pos\nu-\pos\lambda),\quad  \neg\lambda = \neg\kappa,\quad \neg\nu = \neg\mu\]

      \item $\phi:$ \begin{tikzpicture}[baseline={([yshift=-.5ex]current bounding box.center)},scale=0.6]
        \node (la) at (3,2) {$\lambda$};
        \node (nu) at (3,1) {$\nu$};
        \draw[-] (la) -- (nu);
        \node at (0.5,2.5) {\huge$\times$};
        \draw[-] (0,0) -- (0,4) -- (4,4) -- (4,0) -- (0,0);
      \end{tikzpicture} \quad $\crot\phi:$
        \begin{tikzpicture}[baseline={([yshift=-.5ex]current bounding box.center)},scale=0.6]
        \node (la) at (2,2) {$\kappa$};
        \node (nu) at (2,1) {$\mu$};
        \draw[-] (la) -- (nu);
        \node at (3.5,2.5) {\huge$\times$};
        \draw[-] (0,0) -- (0,4) -- (4,4) -- (4,0) -- (0,0);
      \end{tikzpicture}
      \begin{minipage}{3.8cm}
      $\neg\mu=\neg\nu+\square$\\
      $\extent(\neg\kappa+\neg\nu-\neg\mu)\leq\extent(\neg\kappa)$\\
      $\extent(\pos\kappa+\pos\nu-\pos\mu)=\extent(\pos\kappa)$
      \end{minipage}
      \[\neg\lambda = \domS(\neg\kappa+\neg\nu-\neg\mu),\quad  \pos\lambda = \pos\kappa,\quad \pos\nu = \pos\mu\]

      \item $\phi:$ \begin{tikzpicture}[baseline={([yshift=-.5ex]current bounding box.center)},scale=0.6]
        \node (la) at (3,2) {$\lambda$};
        \node (nu) at (3,1) {$\nu$};
        \draw[-] (la) -- (nu);
        \node at (0.5,1.5) {\huge$\times$};
        \draw[-] (0,0) -- (0,4) -- (4,4) -- (4,0) -- (0,0);
      \end{tikzpicture} \quad $\crot\phi:$
        \begin{tikzpicture}[baseline={([yshift=-.5ex]current bounding box.center)},scale=0.6]
        \node (la) at (2,2) {$\kappa$};
        \node (nu) at (2,1) {$\mu$};
        \draw[-] (la) -- (nu);
        \node at (3.5,1.5) {\huge$\times$};
        \draw[-] (0,0) -- (0,4) -- (4,4) -- (4,0) -- (0,0);
      \end{tikzpicture}
      \[ \pos{\lambda'} - e_1 = \pos{\nu'} = \pos{\kappa'} = \pos{\mu'}, \quad  \neg{\lambda'} = \neg{\nu'}= \neg{\kappa'} = \neg{\mu'} - e_1\]
    \end{enumerate}
    \caption{The cases considered in the proof of
      Theorem~\ref{thm:growth-diagram-rotation}.}
    \label{fig:different_cases_column_rotation}
\end{figure}
\begin{proof}[Proof of Theorem~\ref{thm:growth-diagram-rotation}]
  {\bf Local rules for the positive and the negative parts.}  

  Let us first determine certain local rules satisfied separately by
  the positive and negative parts of the staircases $\kappa$,
  $\lambda$, $\mu$ and $\nu$. A summary of the various cases is
  displayed in Figure~\ref{fig:different_cases_column_rotation},
  where the rules we verify are displayed below the corresponding
  diagrams.  In the following, addition and subtraction of integer
  partitions is defined by interpreting them as vectors in $\ZZ^n$.

  {\em First case, $\lambda$ left of $\nu$:} 

  Let
  $Q=(\emptyset=\mu_0, \mu_1, \ldots, \mu_{s-1} = \pos\lambda, \mu_s
  = \pos\nu)$
  be the partial standard Young tableau corresponding to the sequence
  of partitions in $\pos\G(\phi)$ on the same row as $\lambda$ and
  $\nu$, beginning at the left border. Let
  $\half Q = (\emptyset=\half\mu_0, \half\mu_1, \ldots,
  \half\mu_{s-2} = \pos\kappa, \half\mu_{s-1} = \pos\mu)$
  be the corresponding partial standard Young tableau in
  $\pos\G(\crot\phi)$.

  Suppose there is a cross in $\phi$ in the first column in a row
  below $\nu$, as in
  Figure~\ref{fig:different_cases_column_rotation}.a. Then, by
  Proposition~\ref{prop:jdt-delete-column}, $\half Q = \jdt Q$.  This
  implies that the partitions
  $\mu_{s-1},\mu_s,\half\mu_{s-2},\half\mu_{s-1}$ satisfy the local
  (growth diagram) rule
  $\half\mu_{s-1} = \domS(\half\mu_{s-2}+\mu_s-\mu_{s-1})$, that is,
  $\pos\mu = \domS(\pos\kappa+\pos\nu-\pos\lambda)$.  Moreover
  $\neg\kappa = \neg\lambda$ and $\neg\nu=\neg\mu$ because the growth
  for the negative parts of the staircases is from the top-right to
  the bottom-left.

  If there is a cross in the first column in a row above $\nu$, as in
  Figure~\ref{fig:different_cases_column_rotation}.b, we reason in a
  very similar way.  In this case $\pos\kappa = \pos\lambda$ and
  $\pos\nu=\pos\mu$. For the negative parts of the staircases we
  consider the partial standard Young tableaux $Q$ and $\half Q$
  corresponding to the sequences of partitions beginning at the
  \emph{right} border of $\neg\G(\phi)$ and $\neg\G(\crot\phi)$
  respectively. We then have $Q = \jdt \half Q$ and conclude
  $\neg\lambda = \domS(\neg\kappa+\neg\nu-\neg\mu)$ as before, using
  the symmetry of the local rule, see
  Remark~\ref{rmk:symmetry-of-local-rule}.

  {\em Second case, $\lambda$ above $\nu$:}

  Depending on the position of the cross in the first column there
  are three slightly different cases, as illustrated in
  Figure~\ref{fig:different_cases_column_rotation}.c,~d and~e.

  Recall that the partitions on the right border of a (classical)
  growth diagram corresponding to the right to left reversal of a
  filling $\psi$ are obtained by transposing the partitions on the
  right border of the (classical) growth diagram corresponding to
  $\psi$.  Consider now the filling $\psi$ below and to the left of
  $\lambda$, and let $\half\psi$ be the filling to the left and below
  $\kappa$.  Note that the reversal of $\psi$ is obtained from the
  reversal of $\half\psi$ by appending the first column of $\psi$ to
  the right.  We thus obtain that the transposes of the positive
  parts of the staircases $\kappa$, $\mu$, $\lambda$ and $\nu$
  satisfy the local (growth diagram) rule.

  The relation between the negative parts of the staircases $\kappa$,
  $\mu$, $\lambda$ and $\nu$ is obtained in a very similar way by
  considering the fillings above and to the right of $\nu$ and $\mu$.

  {\bf Bounding the extent and deducing the local rule.}  We now show
  $\mu = \dom_{\fS_n}(\kappa+\nu-\lambda)$, provided
  $n\geq\max(\extent(\kappa),\extent(\lambda),\extent(\mu),\extent(\nu))$.
  To do so, we extend the notion of extent to arbitrary vectors with
  all entries non-negative or all entries non-positive: for a vector
  $\pos\alpha\in\ZZ_{\geq0}^n$, the extent $\extent(\pos\alpha)$ is
  $n$ minus the number of trailing zeros.  Similarly, for a vector
  $\neg\alpha\in\ZZ_{\leq0}^n$, the extent $\extent(\neg\alpha)$ is
  $n$ minus the number of leading zeros.

  The case illustrated in
  Figure~\ref{fig:different_cases_column_rotation}.e follows by
  direct inspection.  We thus only consider the remaining four cases.
  Because $\pos\mu$ and $\neg\mu$ are obtained by sorting
  $\pos\kappa+\pos\nu-\pos\lambda$ and
  $\neg\kappa+\neg\nu-\neg\lambda$ respectively, the latter must have
  all entries non-negative.  Similarly, also
  $\pos\kappa+\pos\nu-\pos\mu$ and $\neg\kappa+\neg\nu-\neg\mu$ have
  all entries non-negative, because $\pos\lambda$ and $\neg\lambda$
  are obtained by sorting these vectors, by the symmetry of the local
  rule, see Remark~\ref{rmk:symmetry-of-local-rule}.

  Suppose first that
  $n\geq\max(\extent(\kappa),\extent(\lambda),\extent(\nu))$. Then
  \[
  \dom_{\fS_n}(\kappa+\nu-\lambda)=\dom_{\fS_n}([\pos\kappa,\neg\kappa]_n+[\pos\nu,\neg\nu]_n-[\pos\lambda,\neg\lambda]_n)
  =\dom_{\fS_n}(\pos\alpha + \neg\alpha),
  \]
  where
  $\pos\alpha =
  [\pos\kappa,\emptyset]_n+[\pos\nu,\emptyset]_n-[\pos\lambda,\emptyset]_n$
  and
  $\neg\alpha=[\emptyset,\neg\kappa]_n+[\emptyset,\neg\nu]_n-[\emptyset,\neg\lambda]_n$.
  It remains to show that
  \[
  \extent(\pos\alpha) + \extent(\neg\alpha)
  =\extent\big(\pos\kappa+\pos\nu-\pos\lambda\big) +
  \extent\big(\neg\kappa+\neg\nu-\neg\lambda\big)\leq n,
  \]
  because then
  \begin{multline*}
    \dom_{\fS_n}(\pos\alpha+\neg\alpha) = [\domS(\pos\alpha),\domS(\neg\alpha)]_n\\
    = [\domS(\pos\kappa+\pos\nu-\pos\lambda),
    \domS(\neg\kappa+\neg\nu-\neg\lambda)]_n = [\pos\mu,\neg\mu]_n =
    \mu.
  \end{multline*}
  Similarly, suppose that
  $n\geq\max(\extent(\kappa),\extent(\mu),\extent(\nu))$. In this
  case, reasoning as above, we have to show that
  $\extent\big(\pos\kappa+\pos\nu-\pos\mu\big) +
  \extent\big(\neg\kappa+\neg\nu-\neg\mu\big)\leq n$.

  The first inequality is verified by inspection of
  Figure~\ref{fig:different_cases_column_rotation}.a and~c, whereas
  the second concerns
  Figure~\ref{fig:different_cases_column_rotation}.b and~d.  Here we
  write, for example, $\pos\lambda=\pos\kappa+\square$ to indicate
  that the partition $\pos\lambda$ is obtained from the partition
  $\pos\kappa$ by adding a single cell, which implies the inequality
  for the extent.
\end{proof}

\begin{defn}\label{def:half-promotion}
  Let $\alt=(\emptyset = \wgt^0,\wgt^1,\dots,\wgt^{2r-1},\wgt^{2r}=\mu)$
  be an alternating tableau.  Then \Dfn{$\halfpr\alt=(\emptyset = \hm^0,\hm^1,\dots,\hm^{2r-1},\hm^{2r}=\mu)$} is the staircase tableau obtained from $\alt$ by setting $\hm^1=\wgt^1=1$, and then applying the local rule~\eqref{eq:local} successively to $\wgt^i$, $\wgt^{i+1}$, and $\hm^{i-1}$ to obtain $\hm^i$ for $i\leq 2r-1$.  Additionally, we set $\hm^{2r}=\mu$.
\end{defn}
In other words, $\halfpr\alt$ can be read off from the diagram for promotion as illustrated in diagram~\eqref{eq:pr-adjoint-schema} beginning with the empty shape in the lower left corner, then following the second row, and terminating with the shape $\mu$ in the upper right corner.

\begin{lemma}\label{lem:bound}
  \begin{enumerate}[(a)]
  \item\label{it:staircase} Let $\alt$ be a staircase tableau of empty
    shape and length $r$.  Then the extent of $\alt$ is at most $r$.
  \item\label{it:alternating} Restricting to alternating tableaux, there is
    a single alternating tableau $\alt_0$ of empty shape, length $r$
    and extent $r$. The filling $\phi_0$ of its growth diagram
    $\G(\alt_0)$ is invariant under rotation: $\rot\phi_0=\phi_0$.
  \item\label{it:alternating-even} Restricting further to alternating
    tableaux of even length, the only tableau $\alt$ such that
    $\half\pr\alt$ has extent $r$ is $\alt_0$.
  \end{enumerate}
\end{lemma}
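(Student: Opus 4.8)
Part~(a) is immediate: $\extent(\wgt^0)=\extent(\emptyset)=0$, and passing from $\wgt^i$ to $\wgt^{i+1}$ alters exactly one coordinate by $\pm1$, hence changes the number of nonzero coordinates by at most one; so $\extent(\wgt^i)\le i\le r$. (If the tableau is moreover of empty shape one even obtains $\extent(\wgt^i)\le\min(i,r-i)$, but the weaker bound is all that is used later.)

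For part~(b) the plan is first to pin down $\alt_0$. Write $\alt=(\emptyset=\wgt^0,\dots,\wgt^{2r}=\emptyset)$. By the estimate of part~(a), now with $2r$ steps and $\extent(\wgt^{2r})=0$, we have $\extent(\wgt^i)\le\min(i,2r-i)\le r$, and equality somewhere forces the profile $i\mapsto\extent(\wgt^i)$ to equal $i\mapsto\min(i,2r-i)$; thus the extent strictly increases along $\wgt^0,\dots,\wgt^r$ and strictly decreases afterwards. The key point is that an extent-increasing step at an even index (an ``add'' step) must turn a $0$ into a $+1$, and --- for the staircase to remain weakly decreasing --- this $+1$ must be appended immediately after the block of positive entries; moreover, by induction along the increasing part, all positive entries are equal to $1$, because an ``add'' step turning a $1$ into a $2$ does not change the extent and hence cannot occur there. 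Symmetrically, an extent-increasing step at an odd index prepends a new $-1$ immediately before the block of negative entries. Hence $\wgt^i=(1^{\lceil i/2\rceil},0^{n-i},(-1)^{\lfloor i/2\rfloor})$ for $i\le r$ (so $n\ge r$ is necessary for $\alt_0$ to exist at all), and the same argument applied to the tableau read backwards --- which is again an alternating tableau of empty shape, length $r$ and extent $r$ --- forces $\wgt^{2r-i}=\wgt^i$; this determines $\alt_0$ uniquely, and one checks it really is an alternating tableau. To obtain $\rot\phi_0=\phi_0$ I would then compute the filling of $\G(\alt_0)$ directly from these explicit staircases: applying the backward rules to the (nested-column) positive parts, and the $180\degree$-rotated backward rules to the negative parts, one finds that the crosses of $\phi_0$ occupy exactly the positions of a cyclic shift $i\mapsto i+c\pmod r$, and such permutations are precisely those fixed by $\rot$.

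For part~(c) the plan is as follows. A direct computation from the explicit form of $\alt_0$ shows that for \emph{even} $r$ the staircase tableau $\halfpr\alt_0=(\hm^0=\emptyset,\hm^1=e_1,\dots,\hm^{2r}=\emptyset)$ attains extent $r$ at its midpoint $\hm^r$, whereas for odd $r$ it reaches only $r-1$; this is the reason for the even-length hypothesis. Conversely, let $\alt$ be an alternating tableau of even length $r$ with $\extent(\halfpr\alt)=r$. Since each step has the form $\hm^i=\dom_{\fS_n}(\hm^{i-1}+\wgt^{i+1}-\wgt^i)=\dom_{\fS_n}(\hm^{i-1}\pm e_k)$, the extent of $\halfpr\alt$ again changes by at most one per step, so it reaches $r$ no earlier than index $r$; let $i_0\ge r$ be minimal with $\extent(\hm^{i_0})=r$. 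Using the squares $(\wgt^i,\wgt^{i+1},\hm^{i-1},\hm^i)$ of the promotion diagram~\eqref{eq:pr-adjoint-schema} together with the symmetry of the local rule (Remark~\ref{rmk:symmetry-of-local-rule}), I would transfer this extremality to the top row: a case analysis of how $\dom_{\fS_n}$ affects the extent --- carried out separately for the positive and the negative parts, bookkeeping trailing and leading zeros exactly as in the proofs of Theorems~\ref{thm:stability} and~\ref{thm:growth-diagram-rotation} --- shows that $\extent(\hm^{i_0})=r$ forces the extents of $\wgt^{i_0}$ and $\wgt^{i_0+1}$, and, propagating along the diagram (everything being squeezed into maximal ``tents''), the whole extent profile of $\alt$ to be maximal. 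In particular $\alt$ is then of empty shape, length $r$ and extent $r$, so $\alt=\alt_0$ by part~(b).

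The step I expect to be the main obstacle is the case analysis in part~(c): controlling $\extent$ under $\mu=\dom_{\fS_n}(\kappa+\nu-\lambda)$ and its reflected and transposed variants requires precisely the painstaking tracking of leading and trailing zeros of the positive and negative parts that already dominates the proofs of Theorems~\ref{thm:stability} and~\ref{thm:growth-diagram-rotation}, and it is there that the even-length hypothesis is really used, guaranteeing that the maximal extent of $\halfpr\alt$ is attained, and is attained only at the midpoint.
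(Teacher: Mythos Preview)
Your treatment of parts~(a) and~(b) matches the paper's: part~(a) is declared trivial, and for part~(b) the paper simply writes down the unique tableau $\alt_0$ explicitly (for $r=2s+1$ it is $(\emptyset,1,1\bar1,\dots,1^s\bar1^s,1^{s+1}\bar1^s,1^s\bar1^s,\dots,1,\emptyset)$, and similarly for even $r$), reads off the permutation $s{+}1,s{+}2,\dots,r,1,\dots,s$, and observes that this cyclic shift is fixed by $\rot$.

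For part~(c), however, you are making life much harder than necessary. You propose to transfer the extremality $\extent(\halfpr\alt)=r$ back to $\alt$ by a case analysis of how $\dom_{\fS_n}$ interacts with the extent across each square of the promotion diagram, and you rightly flag this as the main obstacle. The paper avoids this entirely. Its argument runs the uniqueness reasoning of part~(b) \emph{directly on} $\halfpr\alt$: this is again a staircase tableau of empty shape with $2r$ steps, now with add/subtract pattern $+,+,-,+,-,\dots,+,-,-$; for even $r=2s$ the extent profile $\min(i,2r-i)$ is achievable and forces a single such staircase tableau, whose filling is the permutation $s,s{+}1,\dots,r,1,\dots,s{-}1=\rrot\phi_0$. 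Since the local rule is symmetric (Remark~\ref{rmk:symmetry-of-local-rule}), the map $\alt\mapsto\halfpr\alt$ is injective --- from $\hm^{i-1},\hm^i$ and $\wgt^{i+1}$ one recovers $\wgt^i=\dom_{\fS_n}(\hm^{i-1}+\wgt^{i+1}-\hm^i)$, starting from $\wgt^{2r}=\emptyset$ --- so $\alt$ is uniquely determined by $\halfpr\alt$, hence equals $\alt_0$.

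In short: rather than pushing the extent constraint from the middle row back up to the top row via local-rule bookkeeping, note that the middle row is itself a staircase tableau to which the argument of~(b) applies verbatim, and then invoke injectivity of $\halfpr$. Your proposed case analysis may well go through, but it is not needed and it obscures the simple reason the statement holds.
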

\begin{proof}
  Statement~\eqref{it:staircase} is trivial.  To see
  statement~\eqref{it:alternating}, note that the unique length~$r$
  alternating tableau of empty shape with maximal extent is, for
  $r=2s+1$ odd,
  \[
    \alt_0=(\emptyset,1,1\bar1,\dots,%
    1^s\bar1^s,1^{s+1}\bar1^s,1^s\bar1^s,\dots,%
    1\bar1,1,\emptyset)
  \]
  and, for $r=2s$ even,
  \[
    \alt_0=(\emptyset,1,1\bar1,\dots,%
    1^s\bar1^s,1^s\bar1^{s-1},1^{s-1}\bar1^{s-1},\dots,%
    1\bar1,1,\emptyset).
  \]
  In both cases the extent is $r$ and the corresponding permutation
  is, in one line notation, $s+1,s+2,\dots,r,1,\dots,s$.  This
  permutation is invariant under rotation.

  Similarly, to see statement~\eqref{it:alternating-even}, a
  staircase tableau $\half\pr\alt$ with extent $r$ must have filling
  corresponding to the permutation $s,s+1,\dots,r,1,\dots,s-1$, which
  is the filling $\rrot\phi$, and thus $\alt=\alt_0$.
\end{proof}

The first statement of Theorem~\ref{thm:rotation-promotion-permutations}, with the exception of the case $n=2$ and the case $n=r-1$, is a direct consequence of the following result.  The special case of $\GL(2)$-alternating tableaux and $\GL(r-1)$-alternating tableaux will be considered in Section~\ref{sec:GL2} below.
\begin{thm}\label{thm:pr-rot}
  Let $\alt$ be a $\GL(n)$-alternating tableau of length $r$ and
  empty shape.  Let $\phi$ be the filling of the growth diagram
  $\G(\alt)$.  Let $\half\phi$ and $\hat\phi$ be the fillings of the
  growth diagrams $\G(\halfpr\alt)$ respectively $\G(\pr\alt)$.

  Then, for $n\geq r$, $\rrot\phi=\half\phi$ and
  $\crot\half\phi = \hat\phi$.
\end{thm}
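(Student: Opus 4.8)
The whole argument lives inside the rotation machinery of Theorem~\ref{thm:growth-diagram-rotation} and Corollary~\ref{cor:growth-diagram-rotation}: the plan is to read the middle and the bottom rows of the promotion diagram~\eqref{eq:pr-adjoint-schema} as two successive applications of the growth-diagram rotation principle, the first being a rotation of the rows and the second a rotation of the columns. The first task is to secure the extent bounds that make this principle applicable. The tableaux $\alt$, $\halfpr\alt$ and $\pr\alt$ are straight staircase tableaux of empty shape and length $r$ — the last two because promotion preserves the weight and $\halfpr\alt$ terminates in $\mu=\emptyset$ — so Lemma~\ref{lem:bound}\ref{it:staircase} bounds the extent of every staircase on a staircase path by $r\le n$, and for a corner off the path one notes that the positive and negative parts are built by Robinson--Schensted-type growth from at most $r$ crosses in total, so their numbers of parts sum to at most $r$. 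This is exactly where the hypothesis $n\ge r$ enters, and by Lemma~\ref{lem:bound}\ref{it:alternating} ($\extent(\alt_0)=r$) it cannot be weakened; it guarantees that at every local-rule square we meet all four corner extents are $\le n$, which is the hypothesis of Theorem~\ref{thm:growth-diagram-rotation}.

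Next I would observe that the two rows of~\eqref{eq:pr-adjoint-schema} are literally iterated instances of that local rule. By Definition~\ref{def:half-promotion}, $\halfpr\alt$ arises from $\alt$ by seeding $\hm^1=\wgt^1$ and then setting $\hm^i=\dom_{\fS_n}(\hm^{i-1}+\wgt^{i+1}-\wgt^i)$; reading off the bottom row of~\eqref{eq:pr-adjoint-schema} in the same way, $\pr\alt$ arises from $\halfpr\alt$ by seeding $\Pm^0=\emptyset$ and applying the same rule, with $\Pm^{2r-1}=\hm^{2r-1}$ and $\Pm^{2r}=\mu$ appended rather than computed. The only bookkeeping subtlety is the position at which each recursion is seeded — at $\hm^1$ for the middle row, at $\Pm^0$ for the bottom row — reflecting the offset of the middle row in~\eqref{eq:pr-adjoint-schema}; the remaining initial and terminal corners of all three growth diagrams coincide, since a straight staircase tableau of empty shape starts with the step $\emptyset\to e_1$ and ends with a step back to $\emptyset$.

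The two equalities then follow by the same induction along the staircase path. For $\half\phi=\rrot\phi$: assuming $\hm^0,\dots,\hm^{i-1}$ already label the first corners of the staircase path of $\G(\rrot\phi)$, a row-rotation analogue of Theorem~\ref{thm:growth-diagram-rotation} — proved in the same way, or deduced from the stated column version via the transpose symmetry of growth diagrams in Remark~\ref{prop:transposed-growth} — identifies the staircase produced by the recursion as the next corner of $\G(\rrot\phi)$, the extent hypothesis coming from the first step; that this half-step is a rotation of the rows rather than of the columns is forced by the seeding and by how the staircase path of $\G(\alt)$ aligns with~\eqref{eq:pr-adjoint-schema}, and is consistent with Lemma~\ref{lem:bound}\ref{it:alternating-even}. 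Since a staircase tableau and the filling of its growth diagram determine each other, this yields $\half\phi=\rrot\phi$. Running the identical induction once more, now with Corollary~\ref{cor:growth-diagram-rotation} used directly (the column version, no transpose) starting from the staircase path of $\G(\half\phi)=\G(\rrot\phi)$ and the seed $\Pm^0=\emptyset$, yields $\hat\phi=\crot\half\phi$; combining the two gives $\hat\phi=\crot\rrot\phi=\rot\phi$, which translated back via Definition~\ref{defn:P} is the statement $\rot\Perm(\alt)=\Perm(\pr\alt)$, i.e.\ the first statement of Theorem~\ref{thm:rotation-promotion-permutations} for empty-shape $\GL(n)$-alternating tableaux of length $r$ with $n\ge r$.

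I expect two points to carry the real weight. The first is the orientation bookkeeping above: Theorem~\ref{thm:growth-diagram-rotation} is phrased only for column rotation, and one must verify with care that the first half-step of promotion is a rotation of the rows of $\G(\alt)$ and the second a rotation of the columns of the resulting diagram — matching the borders of $\G(\phi)$ against the rows of~\eqref{eq:pr-adjoint-schema} and keeping track of the conjugation of partitions introduced by transposition is where most of the effort goes. The second is making the extent estimate uniform over the whole iteration: it is not enough that the three tableaux themselves have small extent, one needs that in every local-rule square $\dom_{\fS_n}(\kappa+\nu-\lambda)$ splits as $[\domS(\pos\kappa+\pos\nu-\pos\lambda),\domS(\neg\kappa+\neg\nu-\neg\lambda)]_n$, which is precisely the content of the extent bound in the proof of Theorem~\ref{thm:growth-diagram-rotation} applied with the corner extents controlled as in the first step. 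That this splitting, and hence the whole correspondence, breaks down when $n$ is too small is already visible in the highlighted rectangle in~\eqref{eq:pr-adjoint-example}.
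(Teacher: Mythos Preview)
Your overall plan matches the paper's: apply Theorem~\ref{thm:growth-diagram-rotation} and Corollary~\ref{cor:growth-diagram-rotation} to each of the two transitions in the promotion diagram, secure the extent hypothesis by Lemma~\ref{lem:bound} and $n\ge r$, and obtain the row-rotation half by a symmetry. However, there is a real gap at the boundary of the recursion. You correctly note that $\Pm^{2r-1}=\hm^{2r-1}$ and $\Pm^{2r}$ are \emph{appended} rather than computed, yet you then invoke Corollary~\ref{cor:growth-diagram-rotation} ``directly'' to conclude $\hat\phi=\crot\half\phi$. That corollary requires the local rule to hold for all $i\le 2r-1$; at $i=2r-1$ the rule gives $\dom_{\fS_n}(\Pm^{2r-2}+\hm^{2r}-\hm^{2r-1})=\bar 1$ (since $\hm^{2r}=\emptyset$, $\hm^{2r-1}=1$, and $\Pm^{2r-2}\in\{\emptyset,1\bar1\}$), not $\Pm^{2r-1}=1$. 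The paper closes this gap by introducing an auxiliary staircase tableau $\widetilde\alt$ which does obey the full recursion (so $\widetilde\mu^{2r-1}=\bar 1$), applies the corollary to deduce that the filling of $\G(\widetilde\alt)$ is $\crot\half\phi$, and then observes that the fillings of $\G(\widetilde\alt)$ and $\G(\pr\alt)$ must coincide because both are permutations and agree on the first $r-1$ columns.

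The same boundary mismatch arises for the top-to-middle half, and the paper handles it by the identical device after passing to the reversals of $\halfpr\alt$ and $\alt$ (using Remark~\ref{rmk:symmetry-of-local-rule} together with the fact that the filling of a reversed tableau is the anti-diagonal flip, under which column rotation becomes row rotation). Your transpose route for that half is a reasonable alternative, but in either case you still need the auxiliary-tableau argument to reconcile the appended boundary value with what the local rule would actually produce.
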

\begin{proof}
  Let
  \begin{align*}
    \halfpr\alt&=(\emptyset = \hm^0,\hm^1,\dots,\hm^{2r-1}=1,\hm^{2r}=\emptyset)\\
    \intertext{and let}
    \pr\alt &= (\emptyset = \Pm^0,\Pm^1,\dots,\Pm^{2r-1}=1,\Pm^{2r}=\emptyset).\\
    \intertext{Furthermore, let}
    \widetilde{\alt}&=(\emptyset = \widetilde{\mu}^0,\widetilde{\mu}^1,\dots,\widetilde{\mu}^{2r-1}=\bar1,\widetilde{\mu}^{2r}=\emptyset)
  \end{align*}
  be the staircase tableau obtained by setting $\widetilde{\mu}^0=\emptyset$ and then applying the local rule~\eqref{eq:local} successively to $\hm^i$, $\hm^{i+1}$, and $\widetilde{\mu}^{i-1}$ to obtain $\widetilde{\mu}^i$ for $i\leq 2r-1$.  Because of Lemma~\ref{lem:bound} and the assumption $n\geq r$, Corollary~\ref{cor:growth-diagram-rotation} is applicable and implies that the filling $\widetilde{\phi}$ of the growth diagram $\G(\widetilde{\alt})$ is $\crot\half\phi$.

  All staircases in $\widetilde{\alt}$ except
  $\widetilde{\mu}^{2r-1}$ coincide with those of $\pr\alt$.  Because
  $\hm^{2r-1}=1$, $\hm^{2r}=\emptyset$ and $\widetilde{\mu}^{2r-2}$
  is either $\emptyset$ or $1\bar1$, we have
  $\widetilde{\mu}^{2r-1}=\bar 1$.  However, since $\widetilde{\phi}$
  and $\hat\phi$ correspond to permutations and the first $r-1$
  columns of these fillings are the same, we conclude that
  $\widetilde{\phi}$ equals $\hat\phi$.

  Because of the symmetry of the local rules pointed out in
  Remark~\ref{rmk:symmetry-of-local-rule} and because
  $\half{\mu}^{2r-1}=1$, we can apply the same reasoning replacing
  $\halfpr\alt$ and $\pr\alt$ with the reversal of $\halfpr\alt$ and
  the reversal of $\alt$.  Clearly, the filling corresponding to the
  reversal of a tableau is obtained by flipping the original filling
  over the diagonal from the bottom-left to the top-right.  In the
  process, column rotation is replaced by row rotation, which implies
  that $\rrot(\phi)=\half\phi$.
\end{proof}

\begin{cor}\label{cor:pr-rot-bounds}
  In the setting of Theorem~\ref{thm:pr-rot}, if $n$ is odd it is
  sufficient to require $n\geq r-1$.
\end{cor}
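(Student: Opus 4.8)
The plan is to revisit the proof of Theorem~\ref{thm:pr-rot} and observe that the assumption $n\geq r$ is used there only to invoke Corollary~\ref{cor:growth-diagram-rotation}, which in turn---through Theorem~\ref{thm:growth-diagram-rotation}---requires $n$ merely to be at least the extent of each staircase tableau it is applied to. Since Theorem~\ref{thm:pr-rot} already settles $n\geq r$, the only outstanding case is $n=r-1$; as $n$ is odd, $r=n+1$ is even, so all three parts of Lemma~\ref{lem:bound} are at our disposal. The goal is thus to show that, when $n=r-1$, the extents of the staircase tableaux fed to Corollary~\ref{cor:growth-diagram-rotation} in that proof are all at most $r-1$.

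First I would record the extent bounds. For a $\GL(n)$-alternating tableau $\alt$ of empty shape and length $r$, every staircase occurring in $\alt$, in $\halfpr\alt$, or in $\pr\alt$ is a weight of a representation of $\GL(n)$, hence a vector in $\ZZ^n$, so each of these staircase tableaux has extent at most $n=r-1$. Exactly as in the proof of Theorem~\ref{thm:pr-rot}, the auxiliary tableau $\widetilde\alt$ agrees with $\pr\alt$ in all staircases but the penultimate one, $\widetilde\mu^{2r-1}=\bar1$, which has extent $1$; hence $\extent(\widetilde\alt)=\extent(\pr\alt)\leq r-1$. Reversal preserves extent, so the reversed tableaux appearing in the part of the argument that establishes $\rrot\phi=\half\phi$ (obtained via the symmetry of Remark~\ref{rmk:symmetry-of-local-rule}) satisfy the same bounds. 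Consequently Corollary~\ref{cor:growth-diagram-rotation} is applicable with $n=r-1$ at every step, and the proof of Theorem~\ref{thm:pr-rot} carries over to give $\rrot\phi=\half\phi$ and $\crot\half\phi=\hat\phi$. One may also argue through Lemma~\ref{lem:bound}: for even $r$ its last two parts single out $\alt_0$ as the only length-$r$ alternating tableau for which either the tableau itself or its $\halfpr$ attains the maximal extent $r$, and $\alt_0$ is not a $\GL(r-1)$-alternating tableau, so the extremal situation simply cannot arise; the same applies to $\pr\alt$, and hence to $\widetilde\alt$, since promotion is a bijection on $\GL(r-1)$-alternating tableaux of empty shape and length $r$.

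The delicate point---and the one I expect to cost the most care---is to ensure that the proof of Theorem~\ref{thm:pr-rot} harbours no further, implicit appeal to $n\geq r$. Apart from the extent hypothesis of Corollary~\ref{cor:growth-diagram-rotation}, the steps still to be re-checked are: that $\widetilde\alt$ is a genuine staircase tableau, i.e.\ consecutive staircases differ by a unit vector, which holds because the local rule~\eqref{eq:local} is well defined on highest weight words for minuscule crystals; that $\widetilde\phi$ and $\hat\phi$ correspond to permutations, which follows since $\hat\phi$ is the growth-diagram filling of the $\GL(n)$-alternating tableau $\pr\alt$ of empty shape while $\widetilde\alt$ differs from $\pr\alt$ only in its final staircase; and that these two fillings agree in their first $r-1$ columns, a local consequence of that single difference. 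Since none of these relies on $n\geq r$, the corollary follows.
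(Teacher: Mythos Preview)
Your proof is correct, and your primary route is in fact more direct than the paper's. You observe that for a $\GL(n)$-alternating tableau with $n=r-1$, every staircase appearing in $\alt$, $\halfpr\alt$, $\pr\alt$ and $\widetilde\alt$ lies in $\ZZ^n$ and therefore has extent at most $n=r-1$ automatically; this immediately gives the extent hypothesis of Corollary~\ref{cor:growth-diagram-rotation}, and the rest of the proof of Theorem~\ref{thm:pr-rot} carries over unchanged. The paper instead establishes, for $n\geq r$, the equivalences $\extent(\alt)=r\Leftrightarrow\extent(\halfpr\alt)=r\Leftrightarrow\extent(\pr\alt)=r$ by combining Lemma~\ref{lem:bound} with Theorem~\ref{thm:pr-rot}, and then passes to the contrapositive. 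Your second argument, via Lemma~\ref{lem:bound} and the observation that $\alt_0$ is not a $\GL(r-1)$-tableau, is essentially this same idea. One pleasant feature of your direct argument is that it never actually uses the parity of $n$; the paper's reliance on Lemma~\ref{lem:bound}(c), which is stated only for even length, is what makes the hypothesis ``$n$ odd'' appear natural there.
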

\begin{proof}
    Let $\alt$ be a $\GL(n)$-alternating tableau of length $r=2s$, with $n\geq r$. Let $\phi$ be the filling of $\G(\alt)$. Then, combining Lemma~\ref{lem:bound} and Theorem~\ref{thm:pr-rot} we obtain
    \[
    \extent(\alt) = r \Leftrightarrow \extent(\half\pr\alt) = r \Leftrightarrow \extent(\pr\alt) = r.
    \]

    By contraposition,
    $\extent(\alt) < r \Leftrightarrow \extent(\half\pr\alt) < r
    \Leftrightarrow \extent(\pr\alt) < r$.
    Thus, the claim follows using the proof of
    Theorem~\ref{thm:pr-rot}, taking into account that
    Corollary~\ref{cor:growth-diagram-rotation} is now applicable
    even with $n\geq r-1$.
\end{proof}

Finally, we can conclude one part of Theorem~\ref{thm:rotation-promotion-permutations}.  Note that the case of odd $n$ is also covered by the previous corollary.
\begin{cor}
    Let $\alt$ be a $\GL(n)$-alternating tableau of length $r$ and empty shape.  Then, for $n\geq r-1$, $\rot\Perm(\alt)=\Perm(\pr\alt)$.
\end{cor}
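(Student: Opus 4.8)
The plan is to deduce this corollary in a few lines from Theorem~\ref{thm:pr-rot} and Corollary~\ref{cor:pr-rot-bounds}, which already contain all of the real content; what is left is merely to translate the statement about fillings of growth diagrams into a statement about the permutations they encode. Write $\phi$ for the filling of $\G(\alt)$. Since $\alt$ has empty shape, the tableaux $\Perm_P(\alt)$ and $\Perm_Q(\alt)$ are empty, so $\phi$ has exactly one cross in each of its $r$ rows and each of its $r$ columns; hence $\Perm(\alt)$ is an honest permutation of $\{1,\dots,r\}$, namely the one whose graph is recorded by the crosses of $\phi$. As promotion preserves the weight of a highest weight word, $\pr\alt$ is again of empty shape, so likewise $\Perm(\pr\alt)$ is the permutation recorded by the filling of $\G(\pr\alt)$.

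Next I would invoke Theorem~\ref{thm:pr-rot}: for $n\geq r$ it asserts that $\rrot\phi$ is the filling of $\G(\halfpr\alt)$ and that $\crot\rrot\phi$ is the filling of $\G(\pr\alt)$. By the definition of the rotation of a filling, $\crot\rrot\phi=\rot\phi$, so the filling of $\G(\pr\alt)$ equals $\rot\phi$. For odd $n$, Corollary~\ref{cor:pr-rot-bounds} extends this identity to the range $n\geq r-1$; the one case it does not reach, $n$ even with $n=r-1$, is exactly the family of $\GL(r-1)$-alternating tableaux announced for separate treatment in Section~\ref{sec:GL2}, so I would simply defer it there. In all remaining cases we therefore have that the filling of $\G(\pr\alt)$ is $\rot\phi$, and consequently $\Perm(\pr\alt)$ is the permutation recorded by $\rot\phi$.

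Finally I would record the purely combinatorial fact that reading a permutation off a filling intertwines rotation of fillings with rotation of permutations: removing the first row and column of $\phi$ and reattaching them at the opposite borders shifts the cross in column $i$ and row $j$ cyclically by one step in each coordinate, which is precisely the passage from the arc $(i,\pi(i))$ of the chord diagram of $\pi=\Perm(\alt)$ to $(i\bmod r+1,\pi(i)\bmod r+1)$; hence the permutation recorded by $\rot\phi$ is $\rot\Perm(\alt)$. Combining this with the previous paragraph gives $\rot\Perm(\alt)=\Perm(\rot\phi)=\Perm(\pr\alt)$, as desired.

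I do not anticipate any genuine obstacle at this stage: the difficult part—showing that the middle row of the promotion diagram encodes one row rotation of $\G(\alt)$ and the remaining step of promotion encodes one column rotation—is done in Theorem~\ref{thm:pr-rot} (and, underneath it, Theorem~\ref{thm:growth-diagram-rotation}). The only points requiring any care here are the bookkeeping of the row/column orientations when matching $\rot$ on fillings with $\rot$ on chord diagrams, and keeping track of the exact range of $n$, i.e.\ remembering that the sharper bound $n\geq r-1$ for odd $n$ is supplied by Corollary~\ref{cor:pr-rot-bounds} and that the even case $n=r-1$ is handled in Section~\ref{sec:GL2}.
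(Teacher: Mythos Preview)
Your argument covers $n\geq r$ (via Theorem~\ref{thm:pr-rot}) and odd $n\geq r-1$ (via Corollary~\ref{cor:pr-rot-bounds}), but there is a genuine gap in the remaining case, even $n=r-1$. You defer this case to Section~\ref{sec:GL2}, but that section only treats $n\leq 2$; it says nothing about $\GL(4)$-alternating tableaux of length $5$, $\GL(6)$-alternating tableaux of length $7$, and so on. The forward reference in the paper just before Theorem~\ref{thm:pr-rot} is misleading you here: the case $n=r-1$ is in fact handled by the corollary you are trying to prove, not by Section~\ref{sec:GL2}.

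The missing ingredient is the stability result, Theorem~\ref{thm:stability}, combined with Lemma~\ref{lem:bound}. The paper's (one-line) proof runs as follows. Embed the given $\GL(r-1)$-alternating tableau $\alt$ as a $\GL(r)$-alternating tableau $\tilde\alt$ by inserting a zero into each staircase; this does not change the growth diagram, so $\Perm(\tilde\alt)=\Perm(\alt)$. Now Theorem~\ref{thm:pr-rot} applies (since $r\geq r$) and gives $\Perm(\pr\tilde\alt)=\rot\Perm(\tilde\alt)=\rot\Perm(\alt)$. Since $\tilde\alt$ has extent at most $r-1$, Lemma~\ref{lem:bound}(b) says $\tilde\alt\neq\alt_0$; the filling of $\alt_0$ is rotation-invariant, so $\pr\tilde\alt\neq\alt_0$ as well, and hence $\pr\tilde\alt$ also has extent at most $r-1$. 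Theorem~\ref{thm:stability} then says that removing a zero from each staircase of $\pr\tilde\alt$ yields $\pr\alt$, whence $\Perm(\pr\alt)=\Perm(\pr\tilde\alt)=\rot\Perm(\alt)$. This argument works uniformly for $n=r-1$ regardless of parity, which is why the paper remarks that the odd case is \emph{also} covered by Corollary~\ref{cor:pr-rot-bounds}.
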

\begin{proof}
    This is a consequence of Lemma~\ref{lem:bound} and Theorem~\ref{thm:stability}.
\end{proof}

We now introduce a different way to obtain the filling of $\G(\alt)$.
This construction will also shed some additional light on the
relationship between the local rule~\eqref{eq:local} and those in
Figure~\ref{fig:growth-cell}.

Consider the evacuation diagram for obtaining the evacuation as
illustrated in Figure~\ref{fig:ev-adjoint-example}.  We construct a
filling of the cells surrounded by three or four staircases using the
symbols {\negcross}, {\poscross} and {\fixcross} as follows:
\[
  \begin{tikzpicture}[baseline={([yshift=-.5ex]current bounding box.center)},scale=2]
    \node (ka) at (0,0) {$[\alpha,\tau]_n$};
    \node (la) at (0,1) {$[\alpha,\sigma]_n$};
    \node (nu) at (1,1) {$[\alpha,\tau]_n$};
    \node (mu) at (1,0) {$[\beta,\tau]_n$};
    \draw[<-] (ka) -- (la);
    \draw[<-] (mu) -- (nu);
    \draw[->] (la) -- (nu);
    \draw[->] (ka) -- (mu);
    \node at (0.5,0.5)[rectangle,minimum size=1.5cm,draw] {\negcross};
  \end{tikzpicture}
  \text{or}\quad
  \begin{tikzpicture}[baseline={([yshift=-.5ex]current bounding box.center)},scale=2]
    \node (ka) at (0,0) {$[\alpha,\tau]_n$};
    \node (la) at (0,1) {$[\beta,\tau]_n$};
    \node (nu) at (1,1) {$[\alpha,\tau]_n$};
    \node (mu) at (1,0) {$[\alpha,\sigma]_n$};
    \draw[<-] (ka) -- (la);
    \draw[<-] (mu) -- (nu);
    \draw[->] (la) -- (nu);
    \draw[->] (ka) -- (mu);
    \node at (0.5,0.5)[rectangle,minimum size=1.5cm,draw] {\poscross};
  \end{tikzpicture}
  \text{or}\quad
  \begin{tikzpicture}[baseline={([yshift=-.5ex]current bounding box.center)},scale=2]
    \node (ka) at (0,0) {};
    \node (la) at (0,1) {$[1,\emptyset]_n$};
    \node (nu) at (1,1) {$[\emptyset,\emptyset]_n$};
    \node (mu) at (1,0) {$[1,\emptyset]_n$};
    \draw[<-] (mu) -- (nu);
    \draw[->] (la) -- (nu);
    \node at (0.5,0.5)[rectangle,minimum size=1.5cm,draw] {\fixcross};
  \end{tikzpicture},
\]
where $\beta$ (respectively $\sigma$) is obtained from $\alpha$
(respectively $\tau$) by adding a cell to the first column of the
Ferrers diagram of the partition.  All other cells remain empty.

  The following lemma is the main building block in establishing the connection between the filling of $\G(\alt)$ and the decorated evacuation diagram.
\begin{lemma}\label{lem:alternating-equalities}
  Let $\alt = (\emptyset=\wgt^0,\dots,\wgt^{2r}=\emptyset)$ be an
  alternating tableau of empty shape.  Let
  $\halfpr\alt=(\emptyset =
  \hm^0,\hm^1,\dots,\hm^{2r-1},\hm^{2r}=\emptyset)$ be as in
  Definition~\ref{def:half-promotion} and let
  $\pr\alt = (\emptyset = \Pm^0,\dots,\Pm^{2r}=\emptyset)$ be the
  promotion of $\alt$.  Suppose that the filling of $\G(\alt)$ has a
  cross (that is, a {\negcross}, {\poscross} or {\fixcross}) in
  column $\ell>1$ of the first row and in row $k>1$ of the first
  column.  Then, for even $n\geq r$ and for odd $n\geq r-1$, we have
\begin{enumerate}[(a)]
\item $\pos{\wgt^j} = \pos{\hm^{j-1}}$ for $2\leq j\leq 2\ell-2$,
\item $\neg{\wgt^j} = \neg{\hm^{j-1}}$ for $j> 2\ell-2$,
\item $\pos{\wgt^{2\ell-2}} = \pos{\wgt^{2\ell-1}} = \pos{\hm^{2\ell-3}}$, and $\pos{\hm^{2\ell-2}}$ is obtained from these by adding a cell to the first column.  The cell labelled with these four staircases contains a~{\negcross}.
\end{enumerate}

Similarly,
\begin{enumerate}[(a')]
\item $\neg{\hm}^j = \neg{\Pm^{j-1}}$ for $1\leq j\leq 2k-2$
\item $\pos{\hm}^j = \pos{\Pm^{j-1}}$ for $j > 2k-2$
\item $\pos{\hm^{2k-1}} = \pos{\Pm^{2k-2}} = \pos{\Pm^{2k-3}}$, and $\pos{\hm^{2k-2}}$ is obtained from these by adding a cell to the first column. The cell labelled with these four staircases contains a~{\poscross}.
\end{enumerate}

Finally, suppose that there is a cross in the top-left cell, that is
$k=\ell=1$. Then
\begin{enumerate}[(f)]
    \item $\wgt^1 = 1, \wgt^2 = \emptyset$ and $\hm^1=1$. The cell labelled with these four staircases contains a~{\fixcross}.
    \item[(f')] $\wgt^j=\hm^{j-1}-e_1=\Pm^{j-2}$ for all $2\le j \le 2r$.
\end{enumerate}
\end{lemma}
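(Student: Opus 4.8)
The plan is to treat the degenerate case $k=\ell=1$ by hand and to reduce the generic case $k,\ell>1$ to a refinement of the jeu-de-taquin analysis carried out for Theorem~\ref{thm:growth-diagram-rotation}. \emph{Degenerate case.} When $k=\ell=1$ the unique cross of the first row of $\G(\alt)$ and the unique cross of its first column coincide in the top-left cell, so it is a \fixcross and $\Perm(\alt)$ fixes~$1$. Applying the backward rule for the positive parts to that cell, together with $\pos{\wgt^0}=\emptyset$, forces $\pos{\wgt^2}=\emptyset$; since $\wgt^1=e_1$ and $\wgt^2$ is dominant, this gives $\wgt^2=\emptyset$, i.e.\ $w_1=(e_1,\text{-}e_1)$. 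This letter has weight zero and is of highest weight, hence spans a trivial component of $C_1=V\otimes V^\ast$. Writing $w=w_1w'$ with $w'$ of highest weight, Corollary~\ref{cor:sliding} (or simply the naturality of the commutor on the trivial component) shows that $\pr w=\sigma_{C_1,C_2\otimes\dots\otimes C_r}(w)=w'w_1$; equivalently, in diagram~\eqref{eq:pr-adjoint-schema} the first column of the grid is forced by $\wgt^1=e_1$, $\wgt^2=\emptyset$, and each subsequent application of the local rule~\eqref{eq:local} merely reproduces a shift. Reading off cumulative weights, $\pr\alt=(\emptyset,\wgt^3,\dots,\wgt^{2r},e_1,\emptyset)$ and $\halfpr\alt=(\emptyset,e_1,\wgt^3{+}e_1,\dots)$, which unwinds to $\wgt^j=\hm^{j-1}-e_1=\Pm^{j-2}$ for $2\le j\le 2r$, and the top-left cell of the evacuation diagram carries the asserted \fixcross; this is (f) and (f$'$).

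\emph{Generic case.} Suppose $k,\ell>1$. The unique tableau $\alt_0$ of extent~$r$ (Lemma~\ref{lem:bound}) has a rotation-invariant filling and $\pr\alt_0=\alt_0$, so there every asserted equality can be read off the explicit form; henceforth assume $\alt\ne\alt_0$. Then Lemma~\ref{lem:bound} bounds the extent of every staircase in $\alt$, $\halfpr\alt$ and $\pr\alt$ by~$n$, so, exactly as in the proof of Theorem~\ref{thm:growth-diagram-rotation}, at each step $\dom_{\fS_n}$ commutes with the splitting of a staircase into its positive and negative parts, and one may work separately in the classical growth diagrams $\pos\G$ and $\neg\G$; in particular $\pos{\hm^i}$ is obtained from $\pos{\hm^{i-1}}$, $\pos{\wgt^i}$, $\pos{\wgt^{i+1}}$ by the classical growth-diagram rule, and likewise for $\neg{\hm^i}$. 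By Theorem~\ref{thm:pr-rot} (using Corollary~\ref{cor:pr-rot-bounds} for odd $n=r-1$) the filling of $\G(\halfpr\alt)$ is the row rotation $\rrot\phi$ of the filling $\phi$ of $\G(\alt)$. The single cross of the first row of $\phi$ lies in column $\ell>1$, hence strictly above the path, hence is a \negcross, invisible in $\pos\G$. The key steps are then: (a) as long as the jeu-de-taquin slide on $\neg\G$ described in (b) has not yet reached the relevant position, the step computing $\pos{\hm^{j-1}}$ respects the splitting, so an induction on $j$ starting from $\pos{\hm^1}=\pos{\wgt^1}=\pos{\wgt^2}$ (using $\wgt^2\ne\emptyset$, valid since $\ell>1$) gives $\pos{\hm^{j-1}}=\pos{\wgt^j}$ for $j\le 2\ell-2$; (b) on $\neg\G$, removing the first row effects the jeu-de-taquin slide of Proposition~\ref{prop:jdt-delete-column} (applied to rows rather than columns), and once this slide has propagated past the position determined by column~$\ell$---that is, for $j>2\ell-2$---one has $\neg{\wgt^j}=\neg{\hm^{j-1}}$; (c) the promotion-diagram square at $i=2\ell-2$, with corners $\wgt^{2\ell-2},\wgt^{2\ell-1},\hm^{2\ell-3},\hm^{2\ell-2}$, is precisely the cell into which the \negcross of the first row of $\phi$ comes to rest in $\rrot\phi$, so comparing with the shape of a \negcross cell of the decorated evacuation diagram yields $\pos{\wgt^{2\ell-2}}=\pos{\wgt^{2\ell-1}}=\pos{\hm^{2\ell-3}}$ with $\pos{\hm^{2\ell-2}}$ obtained from these by adding a cell to the first column.

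Statements (a$'$)--(c$'$) follow by running the identical argument one step further down the promotion diagram, on the square with corners $\hm^i,\hm^{i+1},\widetilde\mu^{i-1},\widetilde\mu^i$ whose bottom row is $\Pm$; by the second half of the proof of Theorem~\ref{thm:pr-rot} this step corresponds to the column rotation $\crot$ of $\rrot\phi$ instead of a row rotation. Transposing the filling turns $\crot$ into $\rrot$ and, by Remark~\ref{prop:transposed-growth}, interchanges $\pos\G$ and $\neg\G$, while the cross in row $k>1$ of the first column of $\phi$ now plays the role that the cross in column $\ell$ of the first row played above; this yields (a$'$)--(c$'$) from (a)--(c) with the positive and negative parts interchanged and $\ell$ replaced by~$k$. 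A modest amount of bookkeeping is needed here: the cross in row~$k$ of the first column of $\phi$ moves to row $k-1$ of the first column of $\rrot\phi$, which is harmless because $\crot$ acts on columns, and the fixed next-to-last shape $\hm^{2r-1}=e_1$ of $\halfpr\alt$ makes the boundary indices match.

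I expect the main obstacle to be exactly this index bookkeeping in the generic case: confirming that the jeu-de-taquin hole of Proposition~\ref{prop:jdt-delete-column} really originates at the position dictated by column~$\ell$ (respectively row~$k$), and matching the precise ranges $2\le j\le 2\ell-2$ and $j>2\ell-2$ with the hole's trajectory through the diagram. Once the positive/negative splitting has been established throughout---which is where the hypothesis on $n$, via Lemma~\ref{lem:bound}, is essential---the remaining verifications, including the identification of the \negcross, \poscross and \fixcross cells, are routine.
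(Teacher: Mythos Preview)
Your proposal is correct and follows essentially the same route as the paper's proof: both reduce the lemma to the case analysis of Theorem~\ref{thm:growth-diagram-rotation} (the five configurations in Figure~\ref{fig:different_cases_column_rotation}), invoked through Theorem~\ref{thm:pr-rot} and Corollary~\ref{cor:pr-rot-bounds} to justify the extent bounds. The only notable differences are organisational. You argue (a)--(c) first, via row rotation and jeu-de-taquin language, and then derive (a$'$)--(c$'$) by the transposition remark; the paper does the reverse, reading (a$'$)--(c$'$) directly off the column-rotation cases of Figure~\ref{fig:different_cases_column_rotation} and obtaining (a)--(c) via the reversal trick from the last paragraph of the proof of Theorem~\ref{thm:pr-rot}. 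You also work out (f) and (f$'$) explicitly, whereas the paper's proof is silent on that case. A minor point: your appeal to Corollary~\ref{cor:sliding} for the degenerate case is slightly misdirected, since that corollary uses naturality of the commutor in $B$ while you need it in $A$; but naturality holds in both arguments, and your direct computation via the local rule (adding $e_1$ to a dominant weight keeps it dominant) is the cleaner justification anyway.
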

\begin{proof}
  Consider a square of four adjacent staircases in the diagram for
  computing the promotion of an alternating tableau below:
  \begin{equation}\label{eq:pr-adjoint}
    \def\arraystretch{2.5}
    \begin{array}{rcccccccl}
      \wgt^2&\dots&\tikzmark{s1}\wgt^{2\ell-2}&\wgt^{2\ell-1}             &\multicolumn{4}{c}{\dotfill}                  &\wgt^{2r}=\emptyset\\
      \hm^1&\dots&\hm^{2\ell-3}             &\hm^{2\ell-2}\tikzmark{e1}&\dots&\tikzmark{s2}\hm^{2k-2}&\hm^{2k-1}&\dots&\hm^{2r-1}\\
      \emptyset=\Pm^0&\multicolumn{4}{c}{\dotfill}                                     &\Pm^{2k-3}&\Pm^{2k-2}\tikzmark{e2}&\dots&\Pm^{2r-2}
    \end{array}
    \begin{tikzpicture}[remember picture,overlay]
      \node [xshift=28pt,yshift=-12pt]  at (pic cs:s1) {\negcross};
      \node [xshift=28pt,yshift=-12pt]  at (pic cs:s2) {\poscross};
    \end{tikzpicture}
    \begin{tikzpicture}[remember picture,overlay]
    \end{tikzpicture}
  \end{equation}
  By definition, these satisfy the local rule, as required by
  Theorem~\ref{thm:rotation-promotion-matchings}. By
  Corollary~\ref{cor:pr-rot-bounds},
  Theorem~\ref{thm:growth-diagram-rotation} is applicable with the
  given bounds for $n$.  The equalities for the staircases in the
  second and third row are precisely the equalities listed below the
  illustrations in Figure~\ref{fig:different_cases_column_rotation}:
  cases~(a) and~(c) there describe the situation to the left of
  \poscross, case~(e) describes the situation at \poscross\ and
  cases~(b) and~(d) describe the situation to the right of \poscross.

  The equalities for the staircases in the first and second row can
  be obtained as in the last paragraph of the proof of
  Theorem~\ref{thm:pr-rot}.
\end{proof}

By successively applying Lemma~\ref{lem:alternating-equalities}
we obtain the following result for the evacuation diagram.
\begin{cor}\label{cor:relate-fillings}
  Let $\alt$ be a $\GL(n)$-alternating tableau of empty shape and
  length $r$ with corresponding growth diagram $\G(\alt)$ and filling
  $\phi$. Suppose that $n\geq r$ if $n$ is even and $n\geq r-1$ if
  $n$ is odd. Consider the evacuation diagram with filling obtained
  as above.  A {\negcross} appears only in odd columns and odd rows,
  a {\poscross} appears only in even columns and even rows and a
  {\fixcross} appears only in even columns and odd rows. Moreover
  $(i,j)$ is the position of a cell with a cross in $\phi$ if and
  only if one of the following cases holds.
  \begin{itemize}
  \item $i<j$ and there is a {\negcross} in row $2i-1$ and column $2j-1$ in the evacuation diagram.
  \item $i>j$ and there is a {\poscross} in row $2j$ and column $2i$.
  \item There is a {\fixcross} in row $2i-1$ and column $2j$. Then we
    also obtain $i=j$.
  \end{itemize}
\end{cor}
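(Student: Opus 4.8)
The plan is to iterate Lemma~\ref{lem:alternating-equalities} block by block. Recall first the layered structure of the evacuation diagram: by the second identity of Lemma~\ref{lem:extend-s_1q} it consists of $r$ promotion diagrams of the form~\eqref{eq:pr-adjoint-schema}, stacked and each with its third row left unappended. The $j$-th such block occupies diagram-rows $2j-1$, $2j$, $2j+1$ (consecutive blocks share a row), it is offset by $2(j-1)$ columns from the first, and it is the promotion diagram of a straight staircase tableau $\alt^{(j-1)}$ of length $r-j+1$, where $\alt^{(0)}=\alt$ and $\alt^{(j)}$ is the length-$(r-j)$ prefix of $\pr\alt^{(j-1)}$. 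A subtlety is that for $j\ge 1$ the tableau $\alt^{(j)}$ need not have empty shape; I would therefore invoke the argument behind Lemma~\ref{lem:alternating-equalities}, rather than only its stated form, in the broader setting of staircase tableaux of arbitrary shape, which costs nothing since that argument rests on Theorem~\ref{thm:growth-diagram-rotation} and Corollary~\ref{cor:pr-rot-bounds}, both of which already allow arbitrary fillings. The base case $r=1$ is immediate: $\alt=(\emptyset,1,\emptyset)=\halfpr\alt$, the single cell carries a $\fixcross$, and $\Perm(\alt)=\{(1,1)\}$.

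\textbf{One block.} First I would analyse a single block. Applying Lemma~\ref{lem:alternating-equalities} to $\alt^{(j-1)}$: parts (a)--(c) place exactly one $\negcross$ in block $j$, in the cell between the $\wgt$-row and the $\hm$-row, with its column pinned down by the column $\ell$ of the unique cross in the first row of $\G(\alt^{(j-1)})$; parts (a')--(c') place exactly one $\poscross$, in the cell between the $\hm$-row and the $\Pm$-row, with its column pinned down by the row $k$ of the unique cross in the first column of $\G(\alt^{(j-1)})$; and parts (f)--(f') replace these by a single $\fixcross$ in the cell between the $\wgt$-row and the $\hm$-row, in column $2$ of the block, exactly when $k=\ell=1$. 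The equalities of positive and negative parts in (a), (b), (a'), (b') also certify that no other cell of block $j$ is decorated. Translating the local staircase indices (e.g.\ $\hm^{2\ell-2}$ sits in column $2\ell-2$ of the block) through the offset of block $j$ then shows that a $\negcross$ of block $j$ lies in diagram-row $2j-1$ and an odd diagram-column, a $\poscross$ in diagram-row $2j$ and an even diagram-column, and a $\fixcross$ in diagram-row $2j-1$ and an even diagram-column. Since $2j-1$ is odd and $2j$ is even, this already gives the parity assertions of the corollary.

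\textbf{Assembling.} It then remains to combine the single-block analysis with the inductive hypothesis applied to $\alt^{(1)}$, and to match global positions. By Theorem~\ref{thm:pr-rot} (with Corollary~\ref{cor:pr-rot-bounds} furnishing the stated range of $n$ for $n$ odd) the filling of $\G(\pr\alt)$ is $\rot\phi$; passing to the length-$(r-1)$ prefix deletes the last row and last column, so $\G(\alt^{(1)})$ is obtained from $\phi$ by one filling rotation followed by deletion of the two arcs through the next-to-last position. A cross of $\phi$ lying in the first row or the first column of $\phi$ is peeled off already by block $1$ (as its $\negcross$, $\poscross$ or $\fixcross$, according to Lemma~\ref{lem:alternating-equalities}(c), (c') or (f)), and by the previous paragraph it lands in the evacuation diagram exactly where the corollary predicts; every other cross survives into $\G(\alt^{(1)})$ in a position governed by the above transformation, and the inductive hypothesis places it correctly in blocks $2,\dots,r$. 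Matching indices, a cross of $\phi$ at position $(i,j)$ with $i<j$ is peeled off in block $i$ as a $\negcross$ in row $2i-1$ and column $2j-1$; one with $i>j$ is peeled off in block $j$ as a $\poscross$ in row $2j$ and column $2i$; and one with $i=j$ is peeled off in block $i$ as a $\fixcross$ in row $2i-1$ and column $2i$. Conversely, since every decorated cell of every block is of exactly one of these three kinds, the assignment is a bijection, and the induction closes.

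\textbf{Main obstacle.} The hard part is the bookkeeping in the assembling step: verifying that a cross of $\phi$ reaches the first row or first column of the appropriate $\G(\alt^{(m)})$ in exactly the right block (so that the block index equals $\min(i,j)$), and converting the local row/column coordinates returned by Lemma~\ref{lem:alternating-equalities} into the global coordinates of the triangular evacuation diagram, uniformly across the $\negcross$, $\poscross$ and $\fixcross$ cases and across the varying shapes of the $\alt^{(m)}$. Once this is pinned down, the remaining ingredients — the decomposition into promotion blocks, the use of the lemma's argument for tableaux of arbitrary shape, and the parity statements — are routine.
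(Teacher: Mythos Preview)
Your overall strategy---iterate Lemma~\ref{lem:alternating-equalities} across the $r$ promotion blocks of the evacuation diagram---is exactly what the paper does (its proof is the single sentence ``By successively applying Lemma~\ref{lem:alternating-equalities}\dots''). The parity analysis and the column bookkeeping you sketch are also correct.

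There is, however, a genuine gap in how you handle the non-empty-shape subtlety. You write that the argument behind Lemma~\ref{lem:alternating-equalities} ``rests on Theorem~\ref{thm:growth-diagram-rotation} and Corollary~\ref{cor:pr-rot-bounds}, both of which already allow arbitrary fillings''. This is false: Theorem~\ref{thm:growth-diagram-rotation} explicitly requires $\phi$ to have \emph{exactly one cross in every row and in every column}, i.e.\ a permutation filling, and this hypothesis is used in its proof (the appeal to Proposition~\ref{prop:jdt-delete-column} and the reversal argument both rely on the cross in the first column existing). So you cannot simply transplant the proof of Lemma~\ref{lem:alternating-equalities} to the truncated tableaux $\alt^{(j)}$. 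Worse, your induction applies the corollary itself (the inductive hypothesis) to $\alt^{(1)}$, but the corollary is stated only for alternating tableaux of empty shape; the induction does not close as written.

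The clean fix avoids the non-empty-shape issue entirely. Observe that block $j$ of the evacuation diagram is not merely the promotion diagram of the truncated $\alt^{(j-1)}$: it coincides, column for column, with a \emph{prefix} of the promotion diagram of $\pr^{j-1}\alt$, which is a genuine alternating tableau of empty shape (and length $r$) with filling $\rot^{j-1}\phi$ by Theorem~\ref{thm:pr-rot}. Hence Lemma~\ref{lem:alternating-equalities} applies directly to $\pr^{j-1}\alt$ for every $j$, with no generalisation needed. The $\negcross$, $\poscross$ or $\fixcross$ it produces in block $j$ is located via the first row and first column of $\rot^{j-1}\phi$, and those correspond to row $j$ and column $j$ of $\phi$; the crosses that have already been rotated past land outside the prefix and hence outside the evacuation diagram, which is exactly why each cross of $\phi$ is accounted for exactly once. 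With this reformulation your argument becomes a direct (non-inductive) proof, and no extension of Theorem~\ref{thm:growth-diagram-rotation} to partial fillings is required.
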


\begin{figure}[h]
  \begin{tikzpicture}[baseline=0pt,scale=0.5]
    \def\rr{8}
    \node at (0.5,\rr.5) {\tiny$1$};
    \node at (1.5,\rr.5) {\tiny$2$};
    \node at (\rr/2,\rr.5) {$\ldots$};
    \node at (\rr-1.5,\rr.5) {\tiny$2r\text{-}1$};
    \node at (\rr-0.5,\rr.5) {\tiny$2r$};
    \node at (\rr.5,\rr-0.5) {\tiny$1$};
    \node at (\rr.5,\rr-1.5) {\tiny$2$};
    \node at (\rr.5,\rr/2+0.25) {$\vdots$};
    \node at (\rr.5,1.5) {\tiny$2r\text{-}1$};
    \node at (\rr.5,0.5) {\tiny$2r$};
    \foreach \y in {0,...,\rr} {
    	\tikzmath{\xmax = \y-Mod(\y,2);};
        \foreach \x in {0,...,\xmax}
        	\draw (\rr-\x,\y) circle[radius=2pt];
    }
    \draw[dotted] (\rr,\rr) -- (\rr/2-1,\rr/2-1);
    \node[anchor=east] at (-0.5,\rr) {$\alt=$};
    \node[rotate=90,anchor=east] at (\rr,-0.5) {$\ev \alt=$};
    \node at (5-0.5,\rr-1+0.5) {\negcross};
    \node at (8-0.5,\rr-2+0.5) {\poscross};
    \node at (4-0.5,\rr-3+0.5) {\fixcross};
    \end{tikzpicture}\quad
    \begin{tikzpicture}[baseline=0pt,scale=0.5]
    \def\rr{8}
    \node at (0.5,\rr.5) {\tiny$1$};
    \node at (1.5,\rr.5) {\tiny$2$};
    \node at (\rr/2,\rr.5) {$\ldots$};
    \node at (\rr-1.5,\rr.5) {\tiny$2r\text{-}1$};
    \node at (\rr-0.5,\rr.5) {\tiny$2r$};
    \node at (\rr.5,\rr-1+0.5) {\tiny$1$};
    \node at (\rr.5,\rr-2+0.5) {\tiny$2$};
    \node at (\rr.5,\rr/2+0.25) {$\vdots$};
    \node at (\rr.5,1.5) {\tiny$2r\text{-}1$};
    \node at (\rr.5,0.5) {\tiny$2r$};
    \foreach \y in {0,...,\rr} {
    	\tikzmath{\xmax = \y-Mod(\y,2);};
        \foreach \x in {0,...,\xmax}
        	\draw (\rr-\x,\y) circle[radius=2pt];
    }
    \draw[dotted] (\rr,\rr) -- (\rr/2-1,\rr/2-1);
    \node[anchor=east] at (-0.5,\rr) {$\ev \alt=$};
    \node[rotate=90,anchor=east] at (\rr,-0.5) {$\alt=$};
    \node at (\rr-1+0.5,5-0.5) {\poscross};
    \node at (\rr-2+0.5,8-0.5) {\negcross};
    \node at (\rr-3+0.5,4-0.5) {\fixcross};
  \end{tikzpicture}
  \caption{The symmetry of the evacuation diagram.}
  \label{fig:filling-from-evacuation-diagram}
\end{figure}
By the symmetry of the local rules the evacuation diagram for
$\ev \alt$ is obtained from the evacuation diagram for $\alt$ by
mirroring it along the diagonal and interchanging {\poscross} and
{\negcross}. The cell $(i,j)$ is interchanged with the cell
$(2r+1-j,2r+1-i)$.  This yields the part of
Theorem~\ref{thm:rotation-promotion-permutations} concerning
evacuation, see Figure~\ref{fig:filling-from-evacuation-diagram} for
an illustration.
\begin{thm} \label{thm:ev-rc}
  Let $\alt=(\emptyset = \wgt^0,\wgt^1,\dots,\wgt^{2r-1},\wgt^{2r}=\emptyset)$ be an alternating tableau. Suppose that $n\geq r$ if $n$ is even and $n\geq r-1$ if $n$ is odd. Let $\phi$ be the filling of the growth diagram $\G(\alt)$. Then the filling of $\G(\ev \alt)$ is obtained by rotating $\phi$ by $180\degree$.
\end{thm}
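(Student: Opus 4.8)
The plan is to read the filling of $\G(\alt)$ off the decorated evacuation diagram of $\alt$ by means of Corollary~\ref{cor:relate-fillings}, and then to exploit the reflection symmetry of that diagram. Since evacuation sends $\GL(n)$-alternating tableaux of empty shape and length $r$ to tableaux of the same kind, the hypothesis on $n$ lets us apply Corollary~\ref{cor:relate-fillings} both to $\alt$ and to $\ev\alt$; this is the only place where the bound on $n$ enters.

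The geometric heart of the argument is the claim that the decorated evacuation diagram of $\ev\alt$ is obtained from that of $\alt$ by a reflection along the diagonal, under which the cell in row $a$ and column $b$ of one diagram becomes the cell in row $2r+1-b$ and column $2r+1-a$ of the other, combined with interchanging the markings $\poscross$ and $\negcross$ and leaving $\fixcross$ fixed. I would establish this from the symmetry of the local rule recorded in Remark~\ref{rmk:symmetry-of-local-rule}: because $\mu=\dom_W(\kappa+\nu-\lambda)$ holds if and only if $\lambda=\dom_W(\kappa+\nu-\mu)$, the triangular array of staircases built by iterated promotion, as justified by Lemma~\ref{lem:extend-s_1q}, obeys the very same local rules whether one propagates it starting from $\alt$, read off the border on which $\alt$ sits, or starting from $\ev\alt$, read off the opposite border. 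The diagonal reflection interchanges the positive and negative parts of the staircases (compare Remark~\ref{prop:transposed-growth}), so it turns the local configuration of a $\negcross$ cell into that of a $\poscross$ cell and conversely, as one reads off their defining diagrams given just before Lemma~\ref{lem:alternating-equalities}, while it preserves that of a $\fixcross$ cell $[1,\emptyset]_n\to[\emptyset,\emptyset]_n$.

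Granting this symmetry, the theorem follows by an index computation. By Corollary~\ref{cor:relate-fillings} applied to $\alt$, the cell $(i,j)$ of $\phi=\G(\alt)$ carries a cross in precisely one of three situations: $i<j$ and the evacuation diagram of $\alt$ has a $\negcross$ in row $2i-1$, column $2j-1$; or $i>j$ and it has a $\poscross$ in row $2j$, column $2i$; or $i=j$ and it has a $\fixcross$ in row $2i-1$, column $2j$. In the first case, the reflection $(a,b)\mapsto(2r+1-b,2r+1-a)$ together with the decoration swap produces a $\poscross$ in row $2(r+1-j)$, column $2(r+1-i)$ of the evacuation diagram of $\ev\alt$, which by Corollary~\ref{cor:relate-fillings} applied to $\ev\alt$ records a cross of $\G(\ev\alt)$ at $(r+1-i,r+1-j)$; and indeed $r+1-i>r+1-j$ because $i<j$. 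The $\poscross$ and $\fixcross$ cases are entirely analogous, each sending a cross of $\G(\alt)$ at $(i,j)$ to a cross of $\G(\ev\alt)$ at $(r+1-i,r+1-j)$. Since both fillings carry exactly one cross in every row and every column, this is precisely the statement that the filling of $\G(\ev\alt)$ is $\phi$ rotated by $180\degree$.

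I expect the main obstacle to be the careful verification of the diagonal symmetry of the decorated evacuation diagram: one must check not only that Remark~\ref{rmk:symmetry-of-local-rule} propagates through the entire iterated-promotion construction in the reflected form claimed, but also that the $\poscross$, $\negcross$ and $\fixcross$ markings transform exactly as asserted, including along the forced portions of the diagram. Once that is settled, the index arithmetic invoking Corollary~\ref{cor:relate-fillings} is routine, and Figure~\ref{fig:filling-from-evacuation-diagram} illustrates the resulting picture.
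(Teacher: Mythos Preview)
Your proposal is correct and follows essentially the same approach as the paper: both arguments use Corollary~\ref{cor:relate-fillings} to translate between the filling of $\G(\alt)$ and the decorated evacuation diagram, invoke the diagonal symmetry of the latter (coming from Remark~\ref{rmk:symmetry-of-local-rule}) to swap $\poscross$ with $\negcross$ and fix $\fixcross$, and then carry out the identical three-case index computation. One small point of phrasing: the reason the reflection exchanges $\poscross$ and $\negcross$ is not that it ``interchanges the positive and negative parts of the staircases'' (it does not alter any staircase), but rather that it swaps the $\lambda$- and $\mu$-corners of each cell, and the defining patterns of $\poscross$ and $\negcross$ displayed before Lemma~\ref{lem:alternating-equalities} are related by precisely this swap; the reference to Remark~\ref{prop:transposed-growth} is not really pertinent here.
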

\begin{proof}
  Let $\phi_{\alt}=\phi$, respectively $\phi_{\ev\alt}$, be the
  fillings of the growth diagrams $\G(\alt)$, respectively
  $\G(\ev \alt)$.  Let $(i,j)$ be the position of a cell with a cross
  in the filling $\phi_{\alt}$. Then, according to
  Corollary~\ref{cor:relate-fillings}:
  \begin{enumerate}
      \item if $i<j$, there is a {\negcross} in the evacuation diagram of $\alt$ in $(2i-1,2j-1)$. Thus there is a {\poscross} in the evacuation diagram of $\ev\alt$ in $(2r-2j+2,2r-2i+2)$ and therefore there is a cross in $(r+1-i,r+1-j)$ in $\phi_{\ev\alt}$.
      \item if $j<i$, there is a {\poscross} in the evacuation diagram of $\alt$ in $(2j,2i)$. Thus there is a {\negcross} in the evacuation diagram of $\ev\alt$ in $(2r+1-2i,2r+1-2j)$ and therefore there is a cross in $(r+1-i,r+1-j)$ in $\phi_{\ev\alt}$.
      \item if $i=j$, then there is a {\fixcross} in the evacuation diagram of $\alt$ in $(2i-1,2i)$. Thus there is a {\fixcross} in the evacuation diagram of $\ev\alt$ in $(2r+1-2i,2r+2-2i)$ and therefore there is a cross in $(r+1-i,r+1-j)$ in $\phi_{\ev\alt}$.
  \end{enumerate}
\end{proof}

\begin{prop}\label{prop:rotate-filling}
    Consider the classical growth diagrams $\G$ and $\widetilde\G$ for the partial fillings $\phi$ and $\reco\phi$, where $\reco\phi$ is obtained by rotating $\phi$ by $180\degree$. Let $Q$ and $\widetilde Q$ (respectively $P$ and $\widetilde P$) be the partial standard Young tableaux corresponding to the sequence of partitions on the top borders (respectively right borders) of the growth diagrams $\G$ and $\widetilde \G$.  Then $\widetilde Q = \ev Q$ and $\widetilde P = \ev P$.
\end{prop}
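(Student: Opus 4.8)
This Proposition is Schützenberger's classical theorem that the reverse-complement of a permutation --- which, for its permutation matrix, is precisely the $180\degree$ rotation --- evacuates both Robinson--Schensted tableaux, here to be extended to partial permutations. The plan is therefore to reduce to the case of a genuine permutation matrix, invoke the classical result, and lift back to partial fillings.

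Assume first that $\phi$ is the permutation matrix of some $w\in\fS_r$; then $\reco\phi$ is the permutation matrix of $w_0\,w\,w_0$, where $w_0$ is the longest element of $\fS_r$. In a classical growth diagram the chain of partitions along the top border encodes one Robinson--Schensted tableau, say $Q$, and the chain along the right border encodes the other, $P$; likewise for $\widetilde\G$ with $\widetilde Q,\widetilde P$. The identity $\big(P,Q\big)(w_0\,w\,w_0)=\big(\ev P(w),\ev Q(w)\big)$ is classical; in the growth-diagram language it follows by writing the $180\degree$ rotation as ``reverse the reading word'' followed by ``complement the values'', using that reversing the reading word sends $(P,Q)$ to $\big(P^{\mathrm{t}},(\ev Q)^{\mathrm{t}}\big)$, complementing sends $(P,Q)$ to $\big((\ev P)^{\mathrm{t}},Q^{\mathrm{t}}\big)$, and that $\ev$ commutes with transposition, so the composite acts as $\ev$ on both tableaux; see Fomin~\cite[App.~1]{MR1676282}, van Leeuwen~\cite{MR1661263}, and Schützenberger~\cite{MR0190017}. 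This gives $\widetilde Q=\ev Q$ and $\widetilde P=\ev P$ in the permutation case.

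For a general partial filling $\phi$ of an $r\times r$ grid I would complete it to a permutation matrix $\bar\phi$ of $\{1,\dots,r\}$ by adding one cross in each empty row and each empty column, and define the completion of $\reco\phi$ to be $\reco\bar\phi$, so that completing and rotating by $180\degree$ commute. The partial boundary tableaux $Q,P$ of $\G(\phi)$ are recovered from the boundary tableaux $\bar Q,\bar P$ of $\G(\bar\phi)$ by deleting the boxes contributed by the added crosses, which by Proposition~\ref{prop:jdt-delete-column} (applied column by column for $Q$, row by row for $P$) is a sequence of jeu de taquin deletions; since these commute with evacuation, the permutation case carries over. The step I expect to be the main obstacle is exactly this last one: tracking precisely how emptying the added crosses transforms $\bar Q,\bar P$ into $Q,P$ and verifying that the resulting map intertwines with $\ev$, including the bookkeeping of the entry sets --- here one must use the normalisation of evacuation of partial standard Young tableaux adopted in the paper, under which an entry $m$ is sent to a complement determined by the ambient length. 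One could instead bypass the completion and argue directly by induction on $r$, peeling off the first and last rows and columns of $\phi$ and repeatedly applying Proposition~\ref{prop:jdt-delete-column}, but that route looks more laborious.
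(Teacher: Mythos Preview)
The paper states this proposition without proof, as a classical fact about growth diagrams and Schützenberger evacuation; there is no argument to compare against. Your treatment of the permutation case via the identity $(P,Q)(w_0\,w\,w_0)=(\ev P,\ev Q)$ is correct and is indeed the standard argument.

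For partial fillings, however, the completion step does not go through as written. Proposition~\ref{prop:jdt-delete-column} concerns deleting the first \emph{column} of a filling; it says nothing about removing an interior cross, and the effect of the latter on the boundary tableaux is not a jeu de taquin slide in general, so recovering $Q,P$ from $\bar Q,\bar P$ this way is unjustified. A much simpler route avoids completion entirely. An easy induction with the forward rule shows that if a column (respectively row) of $\phi$ contains no cross, then the partitions on either side of that column (row) agree at every height; deleting all crossless rows and columns therefore yields a genuine permutation matrix whose boundary tableaux are precisely the standardisations of $Q$ and $P$. Rotation by $180\degree$ sends the crossless columns $C$ of $\phi$ to the crossless columns $\{r+1-c:c\in C\}$ of $\reco\phi$, and Schützenberger evacuation of a partial standard Young tableau with entries in $\{1,\dots,r\}$ replaces each entry $i$ by $r+1-i$ after sliding. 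Since evacuation depends only on the relative order of the entries, it commutes with standardisation, and the permutation case lifts directly to the partial case without any bookkeeping of added crosses.
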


We are now in the position to prove Theorem~\ref{thm:s1q-partial-permutations}, which we reformulate as follows.
\begin{thm}
  Let $\alt=(\emptyset = \wgt^0,\wgt^1,\dots,\wgt^{2r-1},\wgt^{2r}=\mu)$
  be an alternating tableau of length
  $r\leq \lfloor\frac{n+1}{2}\rfloor$. Let $\phi$ be the filling of
  the growth diagram $\G(\alt)$.
  Then the sequence of partitions on the bottom (respectively
  right) border of $\pos\G(\ev \alt)$ (respectively $\neg\G(\ev\alt)$) is obtained by evacuating the
  sequence of partitions on the bottom (respectively right) border of $\pos\G(\alt)$ (respectively $\neg\G(\alt)$).
  Moreover, the filling of $\G(\ev \alt)$ is obtained by rotating $\phi$ by $180\degree$.
\end{thm}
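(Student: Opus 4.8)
The plan is to reduce everything to the case of empty shape, which is precisely Theorem~\ref{thm:ev-rc}: when $\mu=\emptyset$ the partial tableaux $\Perm_P(\alt)$ and $\Perm_Q(\alt)$ are empty, so the border assertion is vacuous, and ``rotating $\phi$ by $180\degree$'' is exactly the reverse-complement of the permutation $\Perm(\alt)$. Two numerical observations set up the reduction for general $\mu$. First, since every letter of an alternating tableau contributes weight $e_k-e_\ell$, the final weight satisfies $\sum_i\mu_i=0$, so $\pos\mu$ and $\neg\mu$ have the same number of cells, say $m$. Second, $m\le r$, because among the $2r$ steps of $\alt$ only the $r$ steps starting at an even index can add a cell to the positive part. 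Hence $r+m\le 2r\le n+1$ by the hypothesis $r\le\lfloor\frac{n+1}{2}\rfloor$.

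Next I would \emph{complete} $\alt$ to an alternating tableau $\hat\alt$ of length $r+m$ and empty shape whose first $2r+1$ staircases coincide with those of $\alt$. Starting from $\mu=[\pos\mu,\neg\mu]_n$ one appends $m$ consecutive pairs of moves, each pair deleting a corner cell of the current negative part at an ``add'' step and then a corner cell of the current positive part at the following ``subtract'' step, so that after $m$ pairs both partitions are exhausted; the order in which the corner cells of the negative part, respectively of the positive part, are removed is chosen to be the reverse of the order recorded by $\Perm_Q(\alt)$, respectively $\Perm_P(\alt)$. By Definition~\ref{defn:P} (and its staircase-tableau variant) the associated growth diagram is $\G(\hat\alt)=\G(\hat\phi)$ for a genuine permutation matrix $\hat\phi$ of $\{1,\dots,r+m\}$ whose restriction to the top-left $r\times r$ block is the partial filling $\phi$, whose remaining $m$ crosses lie in the appended rows and columns, and whose top-left $r\times r$ sub-growth-diagram is $\G(\alt)$.

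Theorem~\ref{thm:ev-rc} now applies to $\hat\alt$: if $r+m\le n$ it applies directly, and if $r+m=n+1$ then $m=r$, forcing $n=2r-1$ to be odd, and $n\ge(r+m)-1$, so the odd branch applies. Hence the filling of $\G(\ev\hat\alt)$ is $\reco\hat\phi$, the $180\degree$ rotation of $\hat\phi$. To descend to $\alt$ I would use the classical fact that Schützenberger evacuation commutes with restriction to a terminal segment of the labels: the $r$ original length-units of $\hat\alt$ correspond, under $\ev=\cactus_{1,r+m}$, to the terminal $r$ length-units of $\ev\hat\alt$, up to a jeu-de-taquin rectification of the resulting skew data --- which, in the growth-diagram picture, is exactly the column- and row-deletion governed by Proposition~\ref{prop:jdt-delete-column}. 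Reading off the corresponding $r\times r$ window of $\G(\ev\hat\alt)$ --- equivalently, the bottom-right $r\times r$ block of $\reco\hat\phi$ --- produces $\reco\phi$ as the filling of $\G(\ev\alt)$, which is the ``moreover'' clause; the partial tableaux induced on the bottom and right borders of that window are the jeu-de-taquin images of $\Perm_P(\alt)$ and $\Perm_Q(\alt)$. Finally, the assertion that the bottom border of $\pos\G(\ev\alt)$ and the right border of $\neg\G(\ev\alt)$ are the Schützenberger evacuations of the corresponding borders of $\pos\G(\alt)$ and $\neg\G(\alt)$ follows from Proposition~\ref{prop:rotate-filling}, applied to the classical growth diagram $\pos\G(\alt)$ of the partial filling $\phi$ and, after transposing via Remark~\ref{prop:transposed-growth}, to $\neg\G(\alt)$, using the ``moreover'' clause just established.

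The step I expect to be the main obstacle is this descent. Because $\ev$ is the \emph{global} operator $\cactus_{1,r+m}$, one must check carefully that the completion $\hat\alt$ was built so that restricting $\ev\hat\alt$ to the relevant $r\times r$ window genuinely recovers $\ev\alt$ together with the evacuated partial tableaux --- that is, that the $m$ appended crosses of $\hat\phi$ interact with reverse-complementation exactly as the classical restriction/jeu-de-taquin compatibility predicts; for this one may alternatively invoke the decorated evacuation diagram and Corollary~\ref{cor:relate-fillings}, extended from empty shape to arbitrary shape. Everything else reduces to bookkeeping of the orientation conventions relating $\pos\G$, $\neg\G$, bottom versus right borders, and the $180\degree$ rotation.
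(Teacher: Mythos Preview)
Your high-level strategy --- extend $\alt$ to an alternating tableau of empty shape, apply the empty-shape result, and descend --- is exactly the paper's. The paper also checks the same numerics: the hypothesis $r\le\lfloor\frac{n+1}{2}\rfloor$ translates into $n\ge 2r$ for even $n$ and $n\ge 2r-1$ for odd $n$, so that Theorem~\ref{thm:ev-rc} (and, crucially, Theorem~\ref{thm:pr-rot}) apply to the extended tableau; the final border statement is likewise obtained from Proposition~\ref{prop:rotate-filling}. The paper happens to extend by appending the reversal of $\alt$ (length $2r$) rather than a minimal completion (length $r+m$), but this particular choice is not essential.

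Where your argument genuinely diverges, and where it has a gap, is the descent. You assert that the last $r$ length-units of $\ev\hat\alt$ recover $\ev\alt$ ``up to a jeu-de-taquin rectification,'' and that reading the bottom-right $r\times r$ window of $\G(\ev\hat\alt)$ yields the filling of $\G(\ev\alt)$. Neither statement is justified: the operator $\cactus_{1,r+m}$ is global and does not restrict to $\cactus_{1,r}$ on any window, and Proposition~\ref{prop:jdt-delete-column} (which concerns deleting one boundary column of a classical growth diagram) says nothing about this. Your alternative suggestion, to extend Corollary~\ref{cor:relate-fillings} to non-empty shape, is not carried out and would itself amount to the theorem in question.

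The paper closes this gap with one extra idea you are missing: the anti-diagonal symmetry of the evacuation diagram. The evacuation diagram of $\alt$ sits as the top-left triangle inside that of the extension $\tilde\alt$, so $\ev\alt$ is column $2r{+}1$ of the latter. Reflecting along the anti-diagonal turns this column into a \emph{row} of the evacuation diagram of $\ev\tilde\alt$, namely the row reached after $r$ promotions. Hence $\ev\alt$ equals the first $2r{+}1$ staircases of $\pr^{(r)}(\ev\tilde\alt)$. This is the precise identity that replaces your hand-waved ``restriction,'' and it forces you to invoke Theorem~\ref{thm:pr-rot} in addition to Theorem~\ref{thm:ev-rc}: one applies $\ev$ (rotating the $2r\times 2r$ filling by $180\degree$) and then $\pr$ $r$ times (rotating it cyclically), after which the top-left $r\times r$ block is $\reco\phi$ as required. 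Your length-$(r+m)$ extension would work equally well once this step is inserted, with $\pr^{(m)}$ in place of $\pr^{(r)}$.
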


\begin{proof}
  We begin by extending $\alt$ to an alternating tableau of empty shape $\tilde\alt=(\emptyset=\tilde\wgt^0,\dots,\tilde\wgt^{2(r+r)}=\emptyset)$, such that $\tilde\wgt^i=\wgt^i$ for $i\leq r$, by appending the reversal of $\alt$. Let $\tilde\phi$ be the filling of $\G(\tilde\alt)$, which we divide into four parts, as illustrated in the left-most diagram below.
  Filling $A$ is the filling corresponding to $\alt$, filling $B$ is the part below and to the left of $\wgt^{2r}$, filling $C$ is the part above and to the right of $\wgt^{2r}$ and filling $D$ is the part below and to the right of $\wgt^{2r}$.

  By the symmetry of the local rules and the evacuation diagram as illustrated in Figure~\ref{fig:ev-adjoint-example} we see that $\ev\alt$ coincides with the first $2r+1$ staircases of $\pr^{(r)}(\ev\tilde\alt)$, where $\pr^{(r)}$ denotes $\underbrace{\pr\circ\pr\circ\dots\circ\pr}_{\text{$r$ times}}$.

  Let $Q$ be the sequence of partitions on the bottom border of $\pos\G(\alt)$. This sequence is also the sequence of partitions on the top border of the classical growth diagram with filling $B$.

    The inequality $r\leq \lfloor\frac{n+1}{2}\rfloor$ implies that $n\geq 2r$ if $n$ is even and $n\geq 2r-1$ if $n$ is odd. Applying Theorem~\ref{thm:ev-rc} and Theorem~\ref{thm:pr-rot} we obtain the following picture:
 \[
  \begin{tikzpicture}[baseline={([yshift=-.5ex]current bounding box.center)},scale=0.8]
  \draw (0,0) -- (4,0) -- (4,4) -- (0,4) -- (0,0);
  \draw (2,0) -- (2,4);
  \draw (0,2) -- (4,2);
  \node at (1,3) {$A$};
  \node at (1,1) {$B$};
  \node at (3,3) {$C$};
  \node at (3,1) {$D$};
  \node[anchor=south east] at (2,2) {$\wgt^{2r}$};
  \end{tikzpicture} \quad\stackrel{\ev}{\longrightarrow}\quad
  \begin{tikzpicture}[baseline={([yshift=-.5ex]current bounding box.center)},scale=0.8]
  \draw (0,0) -- (4,0) -- (4,4) -- (0,4) -- (0,0);
  \draw (2,4) -- (2,0);
  \draw (4,2) -- (0,2);
  \node[rotate=180] at (3,1) {$A$};
  \node[rotate=180] at (3,3) {$B$};
  \node[rotate=180] at (1,1) {$C$};
  \node[rotate=180] at (1,3) {$D$};
  \end{tikzpicture} \quad\stackrel{\pr^{(r)}}{\longrightarrow}\quad
  \begin{tikzpicture}[baseline={([yshift=-.5ex]current bounding box.center)},scale=0.8]
  \draw (0,0) -- (4,0) -- (4,4) -- (0,4) -- (0,0);
  \draw (2,0) -- (2,4);
  \draw (0,2) -- (4,2);
  \node[rotate=180] at (1,3) {$A$};
  \node[rotate=180] at (1,1) {$B$};
  \node[rotate=180] at (3,3) {$C$};
  \node[rotate=180] at (3,1) {$D$};
  \end{tikzpicture}
  \]

  Thus the sequence of partitions on the bottom border of $\pos\G(\ev \alt)$ is the same as the sequence of partitions on the top border of the regular growth diagram with filling $\reco B$. By Proposition~\ref{prop:rotate-filling} we obtain the statement for the sequence of partitions on the bottom border.

  The result for the right border follows using the same argument, replacing the filling $A$ with the filling $C$.
\end{proof}

\subsection{\texorpdfstring{$\GL(2)$}{GL(2)}-alternating tableaux}
\label{sec:GL2}

To finish the proof of Theorem~\ref{thm:rotation-promotion-permutations}, it remains to consider the case $n=2$.

\begin{figure}
    \centering
    \tikzset{->-/.style={decoration={
        markings,
        mark=at position #1 with {\arrow{>}}},postaction={decorate}}}
    \raisebox{1.25cm}{%
  \begin{tikzpicture}[line width=1pt]
    \node (a) [draw=none, minimum size=4cm, regular polygon, regular polygon sides=8] at (0,0) {};
    \foreach \n [count=\i from 0, remember=\n as \lastn, evaluate={\i+\lastn}] in {1,2,...,8}
        \path (a.center) -- (a.corner \n) node[pos=1.15] {$\n$};
    \draw[->-=.9] (a.corner 1) to (a.corner 8);
    \draw[->-=.9] (a.corner 2) to (a.corner 1);
    \draw[->-=.9] (a.corner 3) to (a.corner 7);
    \draw[->-=.9, bend right] (a.corner 4) to (a.corner 5);
    \draw[->-=.9, bend right] (a.corner 5) to (a.corner 4);
    \draw[->-=.9] (a.corner 6) to (a.corner 3);
    \draw[->-=.9] (a.corner 7) to (a.corner 6);
    \draw[->-=.9] (a.corner 8) to (a.corner 2);
  \end{tikzpicture}}
  \hfill
  \begin{tikzpicture}[scale=0.8]\tiny
\draw[line width=1.5pt] (9,1) -- (9,2) -- (8,2) -- (8,3) -- (7,3) -- (7,4) -- (6,4) -- (6,5) -- (5,5) -- (5,6) -- (4,6) -- (4,7) -- (3,7) -- (3,8) -- (2,8) -- (2,9) -- (1,9);
\foreach \y in {1,...,9}
   \draw (1,\y)--(9,\y);
\foreach \x in {1,...,9}
   \draw (\x,1)--(\x,9);
\foreach \x in {1,...,8}
   \draw (\x.5,9.3) node{\footnotesize$\x$};
\foreach \x in {1,...,8}
   \draw (0.3,9.4-\x) node{\footnotesize$\x$};
\draw (1.5,1.5) node{\huge$\times$};
\draw (2.5,8.5) node{\huge$\times$};
\draw (3.5,2.5) node{\huge$\times$};
\draw (4.5,4.5) node{\huge$\times$};
\draw (5.5,5.5) node{\huge$\times$};
\draw (6.5,6.5) node{\huge$\times$};
\draw (7.5,3.5) node{\huge$\times$};
\draw (8.5,7.5) node{\huge$\times$};
\draw (0.8,8.7) node{\footnotesize$\emptyset$};
\draw (1.8,8.7) node{\footnotesize$1$};
\draw (1.8,7.7) node{\footnotesize$1\bar1$};
\draw (2.8,7.7) node{\footnotesize$1$};
\draw (2.8,6.7) node{\footnotesize$1\bar1$};
\draw (3.8,6.7) node{\footnotesize$2\bar1$};
\draw (3.8,5.7) node{\footnotesize$2\bar2$};
\draw (4.8,5.7) node{\footnotesize$3\bar2$};
\draw (4.8,4.7) node{\footnotesize$3\bar3$};
\draw (5.8,4.7) node{\footnotesize$3\bar2$};
\draw (5.8,3.7) node{\footnotesize$2\bar2$};
\draw (6.8,3.7) node{\footnotesize$2\bar1$};
\draw (6.8,2.7) node{\footnotesize$2\bar2$};
\draw (7.8,2.7) node{\footnotesize$2\bar1$};
\draw (7.8,1.7) node{\footnotesize$1\bar1$};
\draw (8.8,1.7) node{\footnotesize$1$};
\draw (8.8,0.7) node{\footnotesize$\emptyset$};
\end{tikzpicture}
\caption{A noncrossing set partition corresponding to a $\GL(2)$-alternating tableau.}
\label{fig:noncrossing-set-partition}
\end{figure}
\begin{lemma}\label{lem:noncrossing}
    The map $\Perm$ restricts to a bijection between $\GL(n)$-alternating tableaux of empty shape and length $r$, such that every staircase has at most two nonzero parts, and noncrossing set partitions on $\{1,\dots,r\}$.
\end{lemma}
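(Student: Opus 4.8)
The plan is to reduce to the case $n=2$, count the two sides, and then read the noncrossing set partition off the growth diagram. Since $\Perm(\alt)$ is built only from the sequences of positive and negative parts of the staircases of $\alt$, and these are unchanged by inserting or deleting zero entries, deleting all zeros identifies the alternating tableaux in the statement with $\GL(2)$-alternating tableaux of empty shape and length $r$, compatibly with $\Perm$. So I would first reduce to showing that $\Perm$ restricts to a bijection from $\GL(2)$-alternating tableaux of empty shape and length $r$ onto noncrossing set partitions of $\{1,\dots,r\}$.

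To count the domain, write a $\GL(2)$-staircase $(a,b)$ with $a\geq b$ in the coordinates $u=a-b\geq 0$, $v=a+b$. An alternating tableau of empty shape and length $r$ then becomes a sequence of such pairs in which $v$ is forced ($v=0$ after an even and $v=1$ after an odd number of steps), while the $u$-coordinates trace a lattice path of length $2r$ on the non-negative integers, from $0$ to $0$, which is obliged to take an up-step, of either parity, whenever it sits at height $0$; recovering the weights from $u$ and $v$ shows this is a bijection onto Dyck paths of semilength $r$. Hence the domain has the same cardinality, namely the $r$-th Catalan number, as the set of noncrossing set partitions of $\{1,\dots,r\}$, and it suffices to produce a \emph{well-defined} map from the domain to noncrossing set partitions and to prove it \emph{injective}; surjectivity is then automatic.

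The construction of the partition, and the bulk of the work, is an analysis of the growth diagram $\G(\alt)$. For a $\GL(2)$-alternating tableau the staircases on the diagonal path all have extent at most $2$, so each is either purely positive with at most two parts, or has one positive and one negative part, or is purely negative with at most two parts. Consequently the below-diagonal crosses of $\Perm(\alt)$, which by Definition~\ref{defn:P} are determined by the backward rule on the (at most two-part) positive parts, and the above-diagonal crosses, determined likewise by the negative parts, can each be described completely explicitly through the two-coordinate instance of the local rules in Figure~\ref{fig:growth-cell}. I expect this to show that the arcs of $\Perm(\alt)$, drawn in the chord diagram, are pairwise noncrossing (possibly sharing endpoints) and coherently oriented, so that every cycle of $\Perm(\alt)$ has the form $(b_1\,b_m\,b_{m-1}\,\cdots\,b_2)$ with $b_1<\dots<b_m$; the underlying blocks $\{b_1,\dots,b_m\}$ then form a set partition $\pi(\alt)$, and the noncrossing condition on the arcs is exactly the noncrossing condition on the blocks. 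Injectivity is then immediate: $\Perm$ is injective by the results of Section~\ref{sec:growth-diagrams}, and once the coherent orientation is known, the partition of cycles determines $\Perm(\alt)$.

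The main obstacle is this structural step. Although each single application of the two-coordinate local rule is elementary, one must keep careful track of the interaction between crosses below and above the diagonal — notably that a cross above the diagonal never crosses one below it, and that the orientation within a cycle never reverses — as well as of the forced boundary behaviour, namely the first and last steps of $\alt$, corresponding to the initial up-step and final down-step of the Dyck path and to a singleton block or a shortest arc. I expect this to be organized most cleanly either by an induction on $r$ that strips off these forced boundary steps, in the spirit of Lemma~\ref{lem:bound}, or by rephrasing the positive- and negative-part growth as a two-pile patience-sorting procedure from which the noncrossing arc structure is visible at once; the outcome can then be cross-checked against the standard bijection sending the Dyck path above to a noncrossing set partition.
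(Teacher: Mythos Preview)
Your overall strategy is sound and parallels the paper's: reduce to $n=2$, show that $\Perm$ lands in the permutations arising from noncrossing set partitions, and conclude bijectivity. Your Dyck-path enumeration of the domain is a pleasant addition the paper leaves implicit; the paper simply relies on $\Perm$ being injective together with the Catalan count of noncrossing partitions.

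The difference lies in how the key structural step is executed. You propose to extract the noncrossing property through a direct two-coordinate analysis of the local rules, possibly organised by induction on $r$ or a patience-sorting reformulation. The paper instead invokes Corteel's notion of crossing: two arcs $(i,\pi_i)$ and $(j,\pi_j)$ with $i<j$ cross precisely when $i<j\le\pi_i<\pi_j$ or $\pi_i<\pi_j<i<j$, and a permutation arises from a noncrossing set partition (with the clockwise orientation of block boundaries) exactly when it has no such crossing. A crossing of the first kind is then nothing but a pair of crosses in increasing position inside the rectangle below and to the left of the diagonal cell in row and column $j$; by the (conjugated) growth rules of Figure~\ref{fig:growth-cell} this forces the positive part of the staircase at that diagonal corner to have at least two parts. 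But, as your own $(u,v)$ computation shows, every staircase in a $\GL(2)$ alternating tableau is of the form $(a,-a)$ or $(a,1-a)$, so its positive and negative parts are single rows---contradiction. Crossings of the second kind are ruled out symmetrically via the negative parts above the diagonal. This single observation replaces your proposed case analysis and delivers the coherent cycle orientation $(b_1\,b_m\,b_{m-1}\cdots b_2)$ you anticipate, without further work.
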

\begin{proof}
    For simplicity, suppose that $\alt$ is a $\GL(2)$-alternating tableau. Let $\pi$ be the permutation corresponding to the filling associated with $\alt$. We show that, when drawn as a chord diagram as in Figure~\ref{fig:noncrossing-set-partition}, it is obtained from a noncrossing set partition by orienting the arcs delimiting the blocks clockwise, when the corners of the polygon are labelled counterclockwise.

    We say that two arcs $(i,\pi_i)$ and $(j,\pi_j)$ in the chord diagram, with $i<k$, \Dfn{cross}, if and only if the indices involved satisfy one of the following two inequalities:
    \[
        i<j\leq \pi_i<\pi_j \qquad\text{or}\qquad \pi_i<\pi_j<i<j.
    \]
    Let us remark that this is precisely Corteel's~\cite{MR2290808} notion of crossing in permutations.

    It follows by direct inspection that the chord diagram corresponds to a noncrossing partition in the sense above if and only if no two arcs cross.

    Moreover, a crossing of the first kind is the same as a pair of crosses in the rectangle below and to the left of the cell in row and column $j$ of $\G(\alt)$, such that one cross is above and to the left of the other.  Similarly, a crossing of the second kind is the same as a pair of crosses in the rectangle above and to the right of the cell in row and column $i$ of $\G(\alt)$, such that one cross is above and to the left of the other.

    By construction, a $\GL(2)$-alternating tableau cannot contain a vector with both entries strictly positive or both entries strictly negative. Thus, such pairs of crosses may not occur.
\end{proof}

We can now prove another part of Theorem~\ref{thm:rotation-promotion-permutations}.
\begin{thm}
    Let $n\leq 2$ and let $\alt$ be a $\GL(n)$-alternating tableau of empty shape. Then $\rot\Perm(\alt)=\Perm(\pr\alt)$.
\end{thm}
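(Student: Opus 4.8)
The plan is to reduce the case $n\le 2$ to the already-settled case $n\ge r-1$ by means of the stability theorem, Theorem~\ref{thm:stability}, exploiting that $\GL(2)$-alternating tableaux of empty shape have extent at most $2$ and that the growth diagram $\G$, hence $\Perm$, is insensitive to inserting zeros into staircases.

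First I would dispose of $n=1$: for every $r$ there is a single $\GL(1)$-alternating tableau of empty shape and length $r$, and $\Perm$ sends it to the partition of $\{1,\dots,r\}$ into singletons, which is fixed by both rotation and promotion, so the identity holds trivially. So assume $n=2$, and let $\alt$ be a $\GL(2)$-alternating tableau of empty shape and length $r$. Its staircases lie in $\ZZ^2$, hence $\extent(\alt)\le 2$. Put $N=\max(r,2)$ and let $\alt^{(N)}$ be the $\GL(N)$-alternating tableau obtained from $\alt$ by replacing each staircase $[\pos\wgt,\neg\wgt]_2$ by $[\pos\wgt,\neg\wgt]_N$, that is, by inserting $N-2$ zeros between the positive and the negative entries; this is legitimate since every staircase of a $\GL(2)$-alternating tableau of empty shape has nonnegative first and nonpositive second entry. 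Because the forward and backward rules act separately on the positive and negative parts of the staircases, and these parts are unchanged by the insertion, one has $\G(\alt^{(N)})=\G(\alt)$, and in particular $\Perm(\alt^{(N)})=\Perm(\alt)=:\pi$, which is a noncrossing set partition by Lemma~\ref{lem:noncrossing}.

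Next, since $N\ge r-1$, the part of Theorem~\ref{thm:rotation-promotion-permutations} concerning rotation and promotion, which we have already established for $\GL(n)$-alternating tableaux of empty shape whenever $n\ge r-1$, gives $\Perm(\pr\alt^{(N)})=\rot\pi$. Rotation of chord diagrams preserves noncrossing set partitions, so $\rot\pi$ is again one; applying Lemma~\ref{lem:noncrossing} to $\GL(N)$, the unique $\GL(N)$-alternating tableau with image $\rot\pi$ under $\Perm$, namely $\pr\alt^{(N)}$, has every staircase with at most two nonzero parts, i.e.\ $\extent(\pr\alt^{(N)})\le 2$. This is the one point of the argument that is not pure bookkeeping, and it is exactly what is needed to bring Theorem~\ref{thm:stability} into play.

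Finally, since $\alt^{(N)}$ and $\pr\alt^{(N)}$ both have extent at most $2\le N$, Theorem~\ref{thm:stability} (with $m=2$) shows that deleting the $N-2$ inserted zeros from each staircase commutes with promotion. That deletion returns $\alt^{(N)}$ to $\alt$, so it returns $\pr\alt^{(N)}$ to $\pr\alt$. Invoking once more that $\Perm$ ignores zeros in staircases, we obtain $\Perm(\pr\alt)=\Perm(\pr\alt^{(N)})=\rot\pi=\rot\Perm(\alt)$, which is the assertion. The only subtlety worth flagging, besides the extent bound above, is that the naive hopes---running Theorem~\ref{thm:pr-rot} directly, or defining promotion of $\GL(2)$-tableaux intrinsically without reference to $n$---both fail once $r>n$, which is precisely why the detour through large $n$ is necessary.
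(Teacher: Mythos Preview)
Your proof is correct and follows essentially the same route as the paper's own argument: lift $\alt$ to a $\GL(N)$-alternating tableau with $N\geq r$, apply the already-proved case there, use Lemma~\ref{lem:noncrossing} to bound the extent of the promoted tableau by $2$, and then descend via Theorem~\ref{thm:stability}. Your version is simply more explicit---separating out $n=1$, justifying the zero-insertion, and spelling out why $\Perm$ is unaffected---but the structure and the key ingredients are identical.
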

\begin{proof}
    Let $r$ be the length of $\alt$ and let $\hat\alt$ be the $\GL(r)$-alternating tableau obtained from $\alt$ by inserting $r-n$ zeros into each staircase. Then, by Theorem~\ref{thm:pr-rot}, $\Perm(\pr\hat\alt)=\rot\Perm(\hat\alt)$. By Lemma~\ref{lem:noncrossing}, the staircases in the alternating tableau corresponding to $\rot\Perm(\hat\alt)$ have at most two nonzero parts. Thus, the claim follows from Theorem~\ref{thm:stability}.
\end{proof}

To finish the proof of Theorem~\ref{thm:rotation-promotion-permutations}, we show that the evacuation of a $\GL(2)$-alternating tableaux of empty shape is just its reversal.
\begin{thm}
    Let $\alt$ be a $\GL(2)$-alternating tableau of empty shape. Then $\ev\alt$ is the reversal of $\alt$.
\end{thm}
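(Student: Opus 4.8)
The plan is to translate the statement into the language of highest weight words and exploit the explicit description of $\cactus_{1,r}$ through Lusztig's involution. Write $C$ for the crystal of $V\otimes V^\ast$ for $\GL(2)$, let $w=w_1\cdots w_r$ be the highest weight word in $C^{\otimes r}$ corresponding to $\alt$, with $w_j=(e_{k_j},\text{-}e_{\ell_j})$, and recall from Section~\ref{sec:crystals-local-rules} that $\ev w=\cactus_{1,r}w=\lusztig\big(\lusztig(w_r)\cdots\lusztig(w_1)\big)$, the outermost $\lusztig$ being taken in $C^{\otimes r}$. I would reduce the theorem to two claims: first, that on the single crystal $C$ for $\GL(2)$ Lusztig's involution is the \emph{swap} $\lusztig(e_k,\text{-}e_\ell)=(e_\ell,\text{-}e_k)$; and second, that the word $\lusztig(w_r)\cdots\lusztig(w_1)$ is exactly the highest weight word attached to the tableau $\rev\alt$ obtained by reading the staircase sequence of $\alt$ backwards.

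To establish the first claim I would compute $C$ by hand. Its vertices are the $b_i\otimes b_j^\ast$ of weight $e_i-e_j$, with $b_i$ ranging over the crystal of $V$ and $b_j^\ast$ over the crystal of $V^\ast$. By the tensor product rule, $C$ splits into the isolated vertex $b_1\otimes b_1^\ast$, which is the trivial summand $V(0,0)$, and the chain $b_1\otimes b_2^\ast\to b_2\otimes b_2^\ast\to b_2\otimes b_1^\ast$, which is the adjoint summand $V(1,-1)$. Since the Dynkin diagram of type $A_1$ has a single node, $i\mapsto i^\ast$ is the identity, so $\lusztig$ fixes the isolated vertex and, on the chain, exchanges the highest weight vertex $b_1\otimes b_2^\ast$ with the lowest weight vertex $b_2\otimes b_1^\ast$ while fixing the middle one. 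In the notation for letters this reads $\lusztig(e_1,\text{-}e_1)=(e_1,\text{-}e_1)$, $\lusztig(e_2,\text{-}e_2)=(e_2,\text{-}e_2)$ and $\lusztig(e_1,\text{-}e_2)=(e_2,\text{-}e_1)$, which is the asserted swap. This short crystal computation is the only place where the hypothesis $n=2$ is used, and it is the step I expect to demand the most care, mainly because of sign and convention bookkeeping.

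For the second claim I would unwind the dictionary of Example~\ref{eg:alternating}. Reading $\alt=(\emptyset=\wgt^0,\dots,\wgt^{2r}=\emptyset)$ backwards gives $\rev\alt=(\nu^0,\dots,\nu^{2r})$ with $\nu^p=\wgt^{2r-p}$; as all $\nu^p$ are staircases and the pattern of additions and subtractions is merely reversed, $\rev\alt$ is again a $\GL(2)$-alternating tableau, and its shape $\nu^{2r}=\wgt^0$ is empty. Writing $j=r-i+1$, its $i$-th letter is $(e_{k'},\text{-}e_{\ell'})$ with $e_{k'}=\nu^{2i-1}-\nu^{2i-2}=\wgt^{2j-1}-\wgt^{2j}=e_{\ell_j}$ and $\nu^{2i}-\nu^{2i-1}=\wgt^{2j-2}-\wgt^{2j-1}=-e_{k_j}$, so by the first claim this letter equals $\lusztig(w_j)=\lusztig(w_{r-i+1})$; hence the word of $\rev\alt$ is $\lusztig(w_r)\cdots\lusztig(w_1)$. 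Since $\rev\alt$ has empty shape, this word is a highest weight word of weight zero, hence an isolated vertex of $C^{\otimes r}$, and for a singleton component the highest and lowest weight vertices coincide, so $\lusztig$ fixes it. Therefore $\ev w=\lusztig\big(\lusztig(w_r)\cdots\lusztig(w_1)\big)=\lusztig(w_r)\cdots\lusztig(w_1)$ is the word of $\rev\alt$, that is, $\ev\alt=\rev\alt$. Combining this with Lemma~\ref{lem:noncrossing} also recovers, as a sanity check, the remark that $\Perm(\alt)$ then corresponds to the mirror image of the associated noncrossing set partition.
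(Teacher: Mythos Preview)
Your proof is correct and is genuinely different from the paper's argument.

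The paper proves this statement indirectly, relying on the machinery developed earlier in Section~\ref{sec:proofs}: it uses that $\tilde\wgt^{2i}$ is the $2i$-th staircase of $\pr^{(r-i)}\alt$, invokes the rotation--promotion correspondence (Theorem~\ref{thm:pr-rot} combined with the stability result Theorem~\ref{thm:stability}) to compare the fillings, and then deduces $\tilde\wgt^{2i}=\wgt^{2(r-i)}$ from the equality of positive and negative parts of even-indexed staircases in a $\GL(2)$-alternating tableau. The odd-indexed staircases are then handled by a separate fixed-point analysis in the evacuation diagram.

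Your route bypasses all of this. You work directly with the formula $\cactus_{1,r}w=\lusztig\big(\lusztig(w_r)\cdots\lusztig(w_1)\big)$ from Section~\ref{sec:crystals-local-rules}, and the only $n=2$-specific input is the explicit determination of $\lusztig$ on the four-element crystal $C$, which indeed coincides with the coordinate swap (both maps fix the two weight-zero letters and interchange $(e_1,\text{-}e_2)$ with $(e_2,\text{-}e_1)$, irrespective of which weight-zero vertex is the isolated one). The verification that the $i$-th letter of $\rev\alt$ is $(e_{\ell_{r-i+1}},\text{-}e_{k_{r-i+1}})$ is a general index computation valid for any $n$, and the final step---that a weight-zero highest weight word lies in a one-element component and is hence fixed by the outer $\lusztig$---is immediate. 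Your argument is self-contained and uses nothing beyond Section~\ref{sec:crystals-local-rules}; the paper's argument, while more circuitous, stays within the growth-diagram framework that governs the rest of Section~\ref{sec:proofs} and avoids any explicit crystal computation.
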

\begin{proof}
    Let $\alt=(\emptyset=\wgt^0,\dots,\wgt^{2r}=\emptyset)$ and let $\ev\alt=\alt=(\emptyset=\tilde\wgt^0,\dots,\tilde\wgt^{2r}=\emptyset)$. Note that $\tilde\wgt^{2i}$ is the $2i$-th (counting from zero) staircase in $\pr^{(r-i)}\alt$.  Thus, its negative part is the same as the negation of the positive part of $\wgt^{2(r-i)}$, because the fillings in the respective regions of the corresponding growth diagrams coincide. Because the negative part and the positive part of the even labelled staircases of a $\GL(2)$-alternating tableau are equal, we conclude that $\wgt^{2(r-i)}=\tilde\wgt^{2i}$.

    It remains to show that $\wgt^{2(r-i)-1}=\tilde\wgt^{2i+1}$. If $\tilde\wgt^{2i}\neq\tilde\wgt^{2(i+1)}$, the staircase $\tilde\wgt^{2i+1}$ is uniquely determined. Otherwise, if $\tilde\wgt^{2i}=\tilde\wgt^{2(i+1)}$, it is obtained from $\tilde\wgt^{2i}$ by adding the unit vector $e_1$ if and only if $i$ is a fixed point of $\Perm(\tilde\alt)$. Equivalently, this is the case if and only if $r+1-i$ is a fixed point of $\Perm(\alt)$, as can be seen by inspecting the evacuation diagram.
\end{proof}

\subsection{Promotion and evacuation of oscillating tableaux}
\label{sec:oscillating}
We now deduce Theorem~\ref{thm:s1q-partial-matchings} and Theorem~\ref{thm:rotation-promotion-matchings} from the results in the preceding section, by demonstrating that oscillating tableaux can be regarded as special alternating tableaux.

For two partitions $\lambda,\mu$ we define
\begin{align*}
    \lambda \vee \mu &:= \max(\lambda,\mu)\\
    \lambda \wedge \mu &:= \min(\lambda,\mu)
\end{align*} where $\max$ and $\min$ are defined componentwise.

Consider an $n$-symplectic oscillating tableau
$\osc=(\wgtosc^0,\wgtosc^1,\dots,\wgtosc^r)$.  Then
\[
\alt_\osc=[\wgtosc^0,\wgtosc^0]_n, [\wgtosc^0 \vee \wgtosc^1, \wgtosc^0 \wedge \wgtosc^1]_n, [\wgtosc^1,\wgtosc^1]_n, \ldots , [ \wgtosc^{r-1} \vee \wgtosc^r, \wgtosc^{r-1} \wedge \wgtosc^r]_n, [\wgtosc^r,\wgtosc^r]_n
\]
is a $\GL(n)$-alternating tableau.  Because $\wgtosc^i$ and
$\wgtosc^{i+1}$ differ by a unit vector, the staircase
$[\wgtosc^i \vee \wgtosc^{i+1}, \wgtosc^i \wedge \wgtosc^{i+1}]_n$ is
obtained by taking the larger partition as positive part, and the
smaller partition as negative part.

If $\osc$ is an oscillating tableau, the filling of $\G(\alt_\osc)$
is symmetric with respect to the diagonal from the top-left to the
bottom-right.  In particular, if $\osc$ has empty shape the filling
is precisely the permutation obtained by interpreting the perfect
matching as a fixed point free involution.

Conversely, suppose that $\alt$ is an alternating tableau such that
the filling of $\G(\alt)$ is symmetric with respect to the diagonal
from the top-left to the bottom-right, and has no crosses on this
diagonal.  Then, taking the positive part of every second staircase
in $\alt$ we obtain an oscillating tableau $\osc_\alt$.  The filling
of $\G(\osc_\alt)$ is precisely the part of $\G(\alt)$ below and to
the left of the diagonal.

It is easy to see that the rotation of a fixed point free involution corresponds to the rotation of the associated perfect matching.  Also, the reversal of the complement of a symmetric filling corresponds to the reversal of the associated perfect matching.  Thus, it remains to show that this correspondence between oscillating tableaux and certain alternating tableaux intertwines promotion of oscillating tableaux and alternating tableaux: $\osc_{\pr\alt} = \pr\osc_\alt$.

\begin{lemma}
  The promotion of an oscillating tableau equals the oscillating tableau corresponding to the promotion of the associated alternating tableau: $\osc_{\pr\alt} = \pr\osc_\alt$.
\end{lemma}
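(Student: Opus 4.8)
Since the statement equates two \emph{a priori} different promotion operations---$\pr\osc$ coming from the crystal of the symplectic vector representation, and $\pr\alt$, for $\alt=\alt_\osc$ with $\osc=\osc_\alt$, coming from the crystal of the $\GL$--adjoint representation---the plan is to compare them directly through van~Leeuwen's local rules (Theorem~\ref{thm:local_commutor}), not through Sundaram's bijection: identifying $\osc_{\pr\alt}$ with $\pr\osc$ by way of their matchings would, in conjunction with Theorem~\ref{thm:pr-rot}, be equivalent to Theorem~\ref{thm:rotation-promotion-matchings}, which is to be deduced from the present lemma. Write $\osc=(\wgtosc^0,\dots,\wgtosc^r)$, with partitions of at most $p$ parts, and fix the ambient rank $N$ large enough that $\alt_\osc$ and every staircase occurring in its promotion diagram~\eqref{eq:pr-adjoint-schema} are legitimate $\GL(N)$--objects; the statement for the rank actually attached to $\osc$ then follows from Theorem~\ref{thm:stability}, since changing $N$ only inserts or removes zero entries and $\Sp(2p)$--promotion preserves the number of parts. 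By Theorem~\ref{thm:local_commutor}, $\pr\osc$ is read off a single row of $r-1$ squares filled with the symplectic local rule, whose dominance operator $\dom_{\fH_p}$ sorts the \emph{absolute values} of the entries of a vector into weakly decreasing order; whereas $\pr\alt_\osc$ is read off the two-row diagram~\eqref{eq:pr-adjoint-schema}, filled with the $\GL(N)$--rule, whose operator $\dom_{\fS_N}$ is ordinary sorting and, as explained after~\eqref{eq:pr-adjoint-schema}, acts separately on the positive and negative parts of a staircase.

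The core of the argument is an \emph{overlay} of the two diagrams: a square of the symplectic row, carrying consecutive partitions $\wgtosc^{m-1},\wgtosc^{m}$ on its top edge, is put in correspondence with the $2\times2$ block of the diagram for $\alt_\osc$ whose three top corners carry the staircases $[\wgtosc^{m-1},\wgtosc^{m-1}]_N$, $[\wgtosc^{m-1}\vee\wgtosc^{m},\wgtosc^{m-1}\wedge\wgtosc^{m}]_N$ and $[\wgtosc^{m},\wgtosc^{m}]_N$. Call a staircase \emph{balanced} if it has the form $[\tau,\tau]_N$ and \emph{half-balanced} if it has the form $[\alpha\vee\beta,\alpha\wedge\beta]_N$. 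Arguing by induction along the diagram, I claim that the staircases on its bottom edge are balanced at even positions---equal to $[\nu^m,\nu^m]_N$, where the symplectic row produces $\pr\osc=(\emptyset,\nu^1,\dots,\nu^{r-1},\wgtosc^r)$---and half-balanced at odd positions. Taking positive parts then yields $\osc_{\pr\alt_\osc}=\pr\osc$; in fact it gives the sharper equality $\pr\alt_\osc=\alt_{\pr\osc}$.

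The engine of the induction is a \emph{folding identity} for the local rule. For $v\in\ZZ^p$ let $\hat v=(v_1,\dots,v_p,0,\dots,0,-v_p,\dots,-v_1)\in\ZZ^N$ be the associated anti-palindromic vector; then $\dom_{\fS_N}(\hat v)$ is again anti-palindromic and equals $[\,\dom_{\fH_p}(v),\dom_{\fH_p}(v)\,]_N$, because sorting the multiset $\{\pm|v_i|\}\cup\{0,\dots,0\}$ produces exactly the balanced staircase whose partition is the decreasing rearrangement of $|v_1|,\dots,|v_p|$. Consequently, for balanced corners the vector $[\kappa,\kappa]_N+[\nu,\nu]_N-[\lambda,\lambda]_N$ is anti-palindromic with top half $\kappa+\nu-\lambda$, so a $\GL(N)$--square with balanced corners reproduces exactly the symplectic square $\mu=\dom_{\fH_p}(\kappa+\nu-\lambda)$; the symmetry of the local rule (Remark~\ref{rmk:symmetry-of-local-rule}) makes the identity usable in either direction, and from it one checks that balancedness propagates inside a $2\times2$ block, the intervening vertical corners coming out half-balanced.

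The main obstacle---and the step I expect to cost the most work---is the \emph{middle} row of the promotion diagram, that is $\halfpr\alt_\osc$ of Definition~\ref{def:half-promotion}: already for $\osc=(\emptyset,1,\emptyset,1)$ its staircases are neither balanced nor half-balanced (a two-part partition appears even though $\osc$ has only one-part partitions), and the output of a block genuinely depends on them, so tracking balanced staircases alone does not close the induction. To circumvent this I would not describe the middle row at all, but argue by symmetry, as in the last paragraph of the proof of Theorem~\ref{thm:pr-rot}: the passage from the top row to the middle row and, after reversing the tableau, from the middle row to the bottom row are governed by Theorem~\ref{thm:growth-diagram-rotation} and Corollary~\ref{cor:growth-diagram-rotation}, and because $\osc$ is an oscillating tableau the filling of $\G(\alt_\osc)$ is symmetric about the main diagonal, so these two passages are mutually transpose. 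Feeding this symmetry together with the folding identity into the overlay---applied within each $2\times2$ block---one concludes that the bottom edge carries balanced staircases at even positions with the partitions dictated by the symplectic local rule, without ever pinning down $\halfpr\alt_\osc$ itself. Finally, the equality $\pr\alt_\osc=\alt_{\pr\osc}$ unwinds to $\osc_{\pr\alt}=\pr\osc_\alt$, which completes the argument.
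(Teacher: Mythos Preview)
Your overall strategy---overlay the symplectic one-row promotion grid with the $2\times 2$ blocks of the $\GL$ promotion diagram~\eqref{eq:pr-adjoint-schema} and compare local rules---is exactly the paper's, and your folding identity $\dom_{\fS_N}(\hat v)=[\dom_{\fH_p}(v),\dom_{\fH_p}(v)]_N$ is the right algebraic heart of the matter. The gap is precisely where you flag it: the middle row $\halfpr\alt_\osc$. Your workaround via Theorem~\ref{thm:growth-diagram-rotation}, Corollary~\ref{cor:growth-diagram-rotation} and the symmetry of the filling of $\G(\alt_\osc)$ does not close. What that symmetry argument buys you is that the filling of $\G(\pr\alt_\osc)$ is again symmetric (since $\rot\phi$ is symmetric whenever $\phi$ is), hence $\pr\alt_\osc=\alt_{\osc'}$ for \emph{some} oscillating tableau $\osc'$; it does not identify $\osc'$ with $\pr\osc$. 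The phrase ``these two passages are mutually transpose'' does yield $(\rrot\phi)^T=\crot\phi$, but this relates two \emph{different} fillings and does not let you compose the four small $\GL$-squares of a block into one big balanced square without knowing the middle row. So the induction, as you set it up, does not go through.

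The paper handles this by not avoiding the middle row at all: it invokes Lemma~\ref{lem:alternating-equalities} (which, crucially, uses that the cross in the first row and the cross in the first column of $\G(\alt_\osc)$ sit in the \emph{same} position $\ell=k$, by symmetry of the filling) to write down every staircase in the $2\times 2$ block explicitly. In the generic case $i\neq\ell$ one finds the shape~\eqref{eq:square-special}, where the three middle-row entries are $[\lambda,\kappa]_n$, $[\nu,\kappa]_n$, $[\nu,\mu]_n$---neither balanced nor half-balanced, but with the positive and negative parts decoupled enough that the bottom-left $1\times 1$ cell already has constant negative parts, so $\pos\mu=\dom_{\fS_n}(\pos\kappa+\pos\nu-\pos\lambda)$ and hence the $\fH_n$-rule follows. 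In the exceptional case $i=\ell$ the block takes the shape~\eqref{eq:square-special-k=l}, and one checks by hand that $\lambda=\mu$, $\nu=\kappa$, and that $\dom_{\fH_n}(\nu+\nu-\lambda)=\lambda$ because $\lambda$ is $\nu$ with a cell added to the first column. The case split and the explicit middle-row description coming from Lemma~\ref{lem:alternating-equalities} are the missing ingredient in your sketch; your folding identity is then exactly what verifies the generic case once the block has been pinned down.
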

\begin{proof}
  Let $\osc=(\emptyset=\wgtosc^0,\dots,\wgtosc^r=\emptyset)$ be an
  oscillating tableau.  By Theorem~\ref{thm:local_commutor} its
  promotion
  $\pr\osc=(\emptyset=\half\wgtosc^0,\dots,\half\wgtosc^r=\emptyset)$
  can be computed using the local rule from
  Definition~\ref{def:local-rules}:
  \begin{equation}
    \label{eq:local-rule-oscillating}
    \half\wgtosc^{i-1} = %
    \dom_{\fH_n}(\half\wgtosc^{i-2}+\wgtosc^i-\wgtosc^{i-1}),
  \end{equation}
  where $\fH_n$ is the hyperoctahedral group, the Weyl group of the
  symplectic group $\Sp(2n)$.  Recall that in this case the dominant
  representative of a vector is obtained by sorting the absolute
  values of its components into decreasing order.

  Let $\alt_\osc=(\emptyset=\wgt^0,\dots,\wgt^{2r}=\emptyset)$ be
  the alternating tableau associated with the oscillating tableau
  $\osc$.  Let
  $\halfpr\alt_\osc=(\emptyset =
  \hm^0,\hm^1,\dots,\hm^{2r-1},\hm^{2r}=\emptyset)$ be as in
  Definition~\ref{def:half-promotion} and let
  $\pr\alt_\osc = (\emptyset = \Pm^0,\dots,\Pm^{2r}=\emptyset)$ be the
  promotion of $\alt_\osc$.

  We have to show that for every square in the promotion diagram of
  the alternating tableau
  \begin{equation}
    \label{eq:square}
    \def\arraystretch{2.5}
    \begin{array}{ccc}
        \wgt^{2i-2} & \wgt^{2i-1}  & \wgt^{2i}\\
        \hm^{2i-3} & \hm^{2i-2}  & \hm^{2i-1}\\
        \Pm^{2i-4} & \Pm^{2i-3}  & \Pm^{2i-2}
    \end{array}
  \end{equation}
  the positive parts of the four corners
  $\wgtosc^{i-1} = \pos{\wgt^{2i-2}}$,
  $\wgtosc^{i} = \pos{\wgt^{2i}}$,
  $\half{\wgtosc}^{i-2} = \pos{\Pm^{2i-4}}$ and
  $\half{\wgtosc}^{i-1} = \pos{\Pm^{2i-2}}$ satisfy
  Equation~\eqref{eq:local-rule-oscillating}.  Note that the positive
  parts and the negative parts of these staircases coincide.  To
  avoid superscripts, we set $\wgt^{2i-2}=[\lambda,\lambda]_n$,
  $\wgt^{2i}=[\nu,\nu]_n$, $\Pm^{2i-4}=[\kappa,\kappa]_n$ and
  $\Pm^{2i-2}=[\mu,\mu]_n$.

  Because the filling of $\G(\alt_\osc)$ is symmetric, the position
  $\ell$ of the cross in the first row equals the position $k$ of the
  cross in the first column.  Thus, we have $\ell=k$ in
  Lemma~\ref{lem:alternating-equalities}.  Let us consider the case
  $i\neq\ell$ first.  We assume that $i<\ell$, the case of $i>\ell$
  is very similar. If $i<\ell$, that is, $2i\leq 2\ell-2$, the
  positive parts of staircases in the same column of the first two
  rows of diagram~\eqref{eq:square} coincide by
  Lemma~\ref{lem:alternating-equalities}(a).  By
  Lemma~\ref{lem:alternating-equalities}(a'), the negative parts of
  staircases in the same column of the second two rows coincide.

  Moreover, by construction of $\alt_\osc$, the staircase in the
  middle of the first row either equals $[\lambda, \nu]_n$ or
  $[\nu, \lambda]_n$.  Let us assume the latter, the former case is
  dealt with similarly.  For the staircase in the middle we then
  obtain, applying the local rule to the staircases on the top-left,
  \[
    \hm^{2i-2}=\dom_{\fS_n}\big([\lambda,\kappa]_n+[\nu,\lambda]_n-[\lambda,\lambda]_n\big)
    =[\nu,\kappa]_n.
  \]
  Similarly, applying the local rule to the staircases on the
  bottom-right, we find
  \[
    [\mu,\mu]_n=\dom_{\fS_n}\big([\pos{\Pm^{2i-3}},\kappa]_n+[\nu,\mu]_n-[\nu,\kappa]_n\big)\\
    =[\pos{\Pm^{2i-3}},\mu]_n.
  \]
  Therefore, the square of staircases in diagram~\eqref{eq:square}
  has the following form:
  \begin{equation}
    \label{eq:square-special}
    \def\arraystretch{2.5}
    \begin{array}{ccc}
        [\lambda,\lambda]_n & [\nu,\lambda]_n & [\nu,\nu]_n\\\relax
        [\lambda,\kappa]_n & [\nu,\kappa]_n & [\nu,\mu]_n\\\relax
        [\kappa,\kappa]_n & [\mu,\kappa]_n & [\mu,\mu]_n
    \end{array}
  \end{equation}
  Because the negative parts of the four staircases in the lower left
  corner are all the same, the positive parts satisfy
  $\mu=\dom_{\fS_n}(\kappa+\nu-\lambda)$, and therefore also
  Equation~\eqref{eq:local-rule-oscillating}.

  It remains to show that Equation~\eqref{eq:local-rule-oscillating}
  also holds for $i=\ell$.  By
  Lemma~\ref{lem:alternating-equalities}(c), the positive parts of the
  staircases $\wgt^{2\ell-2}$, $\wgt^{2\ell-1}$ and $\hm^{2\ell-3}$
  all coincide, and thus equal $\lambda$.  Moreover, the positive
  part $\alpha$ of the staircase in the middle is obtained by adding
  a cell to the first column of the Ferrers diagram of $\lambda$.

  By Lemma~\ref{lem:alternating-equalities}(c'), the positive parts
  of the staircases $\hm^{2\ell-1}$, $\Pm^{2\ell-3}$ and
  $\Pm^{2\ell-2}$ all coincide, and thus equal $\mu$.  Moreover, the
  positive part $\alpha$ of the staircase in the middle is obtained
  by adding a cell to the first column of the Ferrers diagram of
  $\mu$.  Therefore $\lambda=\mu$.

  Lemma~\ref{lem:alternating-equalities}(b) implies that the negative
  parts of the staircases $\wgt^{2\ell-1}$, $\wgt^{2\ell}$,
  $\hm^{2\ell-2}$ and $\hm^{2\ell-1}$ are all equal to $\nu$.
  Finally, Lemma~\ref{lem:alternating-equalities}(a') shows that the
  negative parts of the staircases $\hm^{2\ell-3}$, $\hm^{2\ell-2}$,
  $\Pm^{2\ell-4}$ and $\Pm^{2\ell-3}$ are all equal to $\kappa$.
  Thus $\nu=\kappa$ and diagram~\eqref{eq:square} has the form
  \begin{equation}
    \label{eq:square-special-k=l}
    \def\arraystretch{2.5}
    \begin{array}{ccc}
        [\lambda,\lambda]_n & [\lambda,\nu]_n & [\nu,\nu]_n\\\relax
        [\lambda,\nu]_n & [\alpha,\nu]_n & [\lambda,\nu]_n\\\relax
        [\nu,\nu]_n & [\lambda,\nu]_n & [\lambda,\lambda]_n
    \end{array}
  \end{equation}
  Considering the growth diagram $\G(\alt_\osc)$, we additionally
  find that $\lambda$ is obtained from $\nu$ by adding a cell to the
  first column. 
  Thus, the vector $\nu + \nu - \lambda$ is obtained from $\nu$ by
  subtracting $1$ from the entry at position $\ell(\lambda)$, which
  is $0$ in $\nu$.  Taking the absolute values of the entries of the
  vector $\nu + \nu - \lambda$ then yields $\lambda$.
\end{proof}

\printbibliography
\end{document}